\newcommand{\rmnum}[1]{\romannumeral #1}
\newcommand{\Rmnum}[1]{\expandafter\@slowromancap\romannumeral #1@}
\newcommand{\lam}{\lambda}
\newcommand{\Ext}{\operatorname{Ext}}
\newcommand{\End}{\operatorname{End}}
\newcommand{\Hom}{\operatorname{Hom}}
\newcommand{\rank}{\mathrm{rank}}
\newcommand{\Mod}{\mathrm{mod}}
\newcommand{\Soc}{\mathrm{Soc}}
\newcommand{\Rad}{\mathrm{Rad}}
\newcommand{\GK}{\mathrm{GKdim}}
\newcommand{\ra}{\rightarrow}
\newcommand{\frb}{\mathfrak{b}}
\newcommand{\frg}{\mathfrak{g}}
\newcommand{\frh}{\mathfrak{h}}
\newcommand{\frl}{\mathfrak{l}}
\newcommand{\frn}{\mathfrak{n}}
\newcommand{\frp}{\mathfrak{p}}
\newcommand{\fru}{\mathfrak{u}}
\newcommand{\frsl}{\mathfrak{sl}}
\newcommand{\bbC}{\mathbb{C}}
\newcommand{\bbZ}{\mathbb{Z}}
\newcommand{\caO}{\mathcal{O}}
\newtheorem{theorem}[equation]{Theorem}%[section]
\newtheorem{cor}[equation]{Corollary}
\newtheorem{prop}[equation]{Proposition}
\newtheorem{lemma}[equation]{Lemma}
\theoremstyle{remark}
\newtheorem{remark}[equation]{Remark}
\theoremstyle{definition}
\newtheorem{definition}[equation]{Definition}
\newtheorem{example}[equation]{Example}
\numberwithin{equation}{section}
\begin{document}

\title[Jantzen coefficients]{Jantzen coefficients and radical filtrations for generalized Verma modules}

\author{Jun Hu}
\address[Hu]{School of Mathematics and Statistics, Beijing Institute of Technology, Beijing, 100081, P.R.~China}
\email{junhu404@bit.edu.cn}

\author{Wei Xiao}
\address[Xiao]{College of Mathematics and statistics, Shenzhen University,
Shenzhen, 518060, Guangdong, P. R. China}
\email{xiaow@szu.edu.cn}

\thanks{The first author is supported by Natural Science Foundation of China (Grant No. 11525102). The second author is supported by the National Science Foundation of China (Grant No. 11701381) and Guangdong Natural Science Foundation (Grant No. 2017A030310138).}

%\thanks{*Corresponding author}
\subjclass[2010]{17B10, 22E47}

\keywords{category $\caO^\frp$, block, basic generalized Verma module, Jantzen's simplicity criterion, decomposable pair}

\keywords{Kazhdan-Lusztig polynomial; Jantzen coefficient; basic generalized Verma module; radical filtration}

\bigskip

\begin{abstract}
In this paper we give a sum formula for the radical filtration of generalized Verma modules in any (possibly singular) blocks of parabolic BGG category which can be viewed as a generalization of Jantzen sum formula for Verma modules in the usual BGG category $\caO$. Combined with Jantzen coefficients, we determine the radical filtrations for all basic generalized Verma modules. The proof makes use of the graded version of parabolic BGG category. Explicit formulae for the graded decomposition numbers and inverse graded decomposition numbers of generalized Verma modules in any (possibly singular) integral blocks of the parabolic BGG category are also given.
\end{abstract}

%\subjclass[2000]{Primary 22E46}

\maketitle

%
%%%%%%%%%%%%%%%%%%%%%%%%%%%%%%%%%%%%%%%%%%%%%%%%%%%%%%%%%%%%%%%%%%%%
%
\section{Introduction}
%
%%%%%%%%%%%%%%%%%%%%%%%%%%%%%%%%%%%%%%%%%%%%%%%%%%%%%%%%%%%%%%%%%%%%
%

%The Koszul duality \cite{So2, BGS, Bac} reveals the deep symmetry of category $\caO$ \cite{BGG1} and generalized category $\caO^\frp$ \cite{R}. One goal of this paper is to describe the graded decomposition numbers and the %inverse graded decomposition numbers for generalized Verma modules (possibly singular). These numbers are generalized Kazhdan-Lusztig polynomials compatible with the parabolic and singular-singular and parabolic duality proved %in \cite{Bac}.

The Jantzen filtration and Jantzen sum formula of Verma module are powerful tool for determining the simplicity of Verma modules and computing the character formulae of simple modules in the usual BGG category $\caO$. Each Verma module in $\caO$ is rigid in the sense that its radical filtration coincides with its socle filtration. The work \cite{BB} of Beilinson and Bernstein shows that the Jantzen filtration of each regular Verma module coincides with its radical filtration (and hence its socle filtration). For parabolic version of BGG category $\caO$ (\cite{R}), the generalized Verma module in singular blocks is in general not rigid anymore. Though Jantzen has also introduced a filtration for each generalized Verma module and developed a determinant formula for the contravariant form in \cite{J1}, it is unclear whether it could play any parallel role as in the usual BGG category $\caO$ case, not to say the relationship between this  ``Jantzen filtration'' and the radical filtration of generalized Verma module in this parabolic setting.

In this paper we prove a sum formula (Theorem \ref{sfthm1}) for the radical filtrations of generalized Verma modules in any (possibly singular) blocks of parabolic BGG category $\caO^{\mathfrak{p}}$. Our starting point is to develop an efficient method to determine radical filtrations of generalized Verma module with singularity. The radical filtrations of generalized Verma modules present critical information of related problems, such as homomorphism between generalized Verma modules \cite{Bo, BC, BEJ, L1, L2, Mat1, Mat2, Mat3, X1, X2} and representation types of blocks \cite{BN, P}. Normally, one can apply the $U_\alpha$-algorithm to compute the radical filtration of a Verma module or generalized Verma module with regular highest weight (\cite{V2, GJ, J3, CC, De, I3, BN}). It depends on some recursive relations of Kazhdan-Lusztig polynomials. This algorithm also works for generalized Verma modules which are not ``too singular'' \cite{BN}. In general, the radical filtration of a singular generalized Verma module can be calculated through a regular one \cite{I2, BN}. However, even when the singular category is small, this process might need information of a very large regular category and trigger a great many recursive computations. The sum formula which we obtained can be viewed as a generalization of Jantzen's sum formula for Verma modules. It gives an evidence that the Jantzen filtration and the radical filtration of generalized Verma module might coincide (Remark \ref{lormk2}). The proof makes essential use of the $\mathbb{Z}$-graded representation of the parabolic BGG category. We present explicit formulae (Theorems \ref{graddecomp11} and \ref{inversegraddecomp11}) for the graded decomposition numbers and inverse graded decomposition numbers of generalized Verma modules in any (possibly singular) integral blocks of the parabolic BGG category. By the Koszul duality of the parabolic BGG category, the grading filtration and radical filtration on each generalized Verma module coincide. Those graded decomposition numbers encode all the information about the radical filtration of generalized Verma modules. Combined with the Jantzen coefficients \cite{XZ} and other results, we can efficiently obtain the radical filtrations of generalized Verma modules with singularity in many cases. As an application, we give radical filtrations of basic generalized Verma modules defined and classified in \cite{XZ}. Those modules are induced from maximal parabolic subalgebras and have maximal nontrivial singularity. The sum formula can also be used to determine the blocks of category $\caO^\frp$ \cite{HXZ, X3} and representation types of the blocks \cite{XZhou}.

The paper is organized as follows. The notations and definitions are presented in Section 2. In Section 3, we use the graded BGG resolution (Lemma \ref{bgg}) to derive the graded decomposition numbers for generalized Verma modules with singularity (Theorem \ref{graddecomp11}), and use the known formulae of the inverse graded decomposition numbers of Verma modules (\cite[Theorem 3.11.4]{BGS}) in the usual BGG category $\caO$ to derive the inverse graded decomposition numbers (Theorem \ref{inversegraddecomp11}) for generalized Verma modules with singularity. The sum formula for the radical filtrations of generalized Verma modules (Theorem \ref{sfthm1}) is proved in Section 4. As an application, we describe the radical filtrations of all the basic generalized Verma in the final section.

%
%%%%%%%%%%%%%%%%%%%%%%%%%%%%%%%%%%%%%%%%%%%%%%%%%%%%%%%%%%%%%%%%%%%%
%
\section{Notations and definitions}
%
%%%%%%%%%%%%%%%%%%%%%%%%%%%%%%%%%%%%%%%%%%%%%%%%%%%%%%%%%%%%%%%%%%%%
%

We mainly adopt the notations in \cite{H3}. Let $\frg$ be a complex semisimple Lie algebra with a fixed Cartan subalgebra $\frh$ contained in a Borel subalgebra $\frb$. Denote by $\Phi\subset\frh^*$ the root system of $(\frg, \frh)$. Let $\Phi^+$ be the positive system corresponding to $\frb$ with a simple system $\Delta\subset\Phi^+$. Every subset $I\subset\Delta$ generates a subsystem $\Phi_I\subset\Phi$ with a positive root system $\Phi^+_I:=\Phi_I\cap\Phi^+$. Denote by $W$ (resp. $W_I$) the Weyl group of $\Phi$ (resp. $\Phi_I$) with longest element $w_0$ (resp. $w_I$). Let $\ell(-)$ be the length function on $W$. The action of $W$ on $\frh^*$ is given by $s_\alpha\lambda=\lambda-\langle\lambda, \alpha^\vee\rangle\alpha$ for $\alpha\in\Phi$ and $\lambda\in\frh^*$. Here $\langle-, -\rangle$ is the bilinear form on $\frh^*$ induced from the Killing form and $\alpha^\vee:=2\alpha/(\alpha, \alpha)$ is the coroot of $\alpha$. We say $\lambda\in\frh^*$ is \emph{regular} if $\langle\lambda, \alpha^\vee\rangle\neq0$ for all roots $\alpha\in\Phi$. Otherwise we say $\lambda$ is \emph{singular}. We say $\lambda$ is \emph{integral} if $\langle\lambda, \alpha^\vee\rangle\in\bbZ$ for all $\alpha\in\Phi$. An integral weight $\lambda\in\frh^*$ is \emph{dominant} (resp. \emph{anti-dominant}) if $\langle\lambda, \alpha^\vee\rangle\in\bbZ^{\geq0}$ (resp. $\langle\lambda, \alpha^\vee\rangle\in\bbZ^{\leq0}$) for all $\alpha\in\Delta$.

Let $\frp_I$ be the standard parabolic subalgebra of $\frg$ corresponding to $I$ with Levi decomposition $\frp_I:=\frl_I\oplus\fru_I$, where $\frl_I$ is the standard Levi subalgebra and $\fru_I$ the nilpotent radical of $\frp_I$. For simplicity, we frequently drop the subscript when $I$ is fixed. Put
\[
\Lambda_I^+:=\{\lambda\in\frh^*\ |\ \langle\lambda, \alpha^\vee\rangle\in\bbZ^{>0}\ \mbox{for all}\ \alpha\in I\}.
\]
For $\lambda\in\Lambda_I^+$, let $F(\lambda-\rho)$ be the finite dimensional simple $\frl_I$-modules of highest weight $\lambda-\rho$, where $\rho:=\frac{1}{2}\sum_{\alpha\in\Phi^+}\alpha$. The \emph{generalized Verma module} is defined by
\[
M_I(\lambda):=U(\frg)\otimes_{U(\frp_I)}F(\lambda-\rho),
\]
where $F(\lambda-\rho)$ has trivial $\fru_I$-actions viewed as a $\frp_I$-module. The highest weight of $M_I(\lambda)$ is $\lambda-\rho$. Let $L(\lambda)$ be the unique simple quotient of $M_I(\lambda)$. They share the same infinitesimal character $\chi_\lambda$, where $\chi_\lambda$ is an algebra homomorphism from the center $Z(\frg)$ of $U(\frg)$ to $\bbC$ so that $z\cdot v=\chi_\lambda(z)v$ for all $z\in Z(\frg)$ and $v\in M_I(\lambda)$. Moreover, $\chi_\lambda=\chi_\mu$ when $\mu\in W\lambda$. For a fixed $\frp=\frp_I$, let $\caO^\frp$ be the category of all finitely generated $\frg$-modules $M$, which is semisimple under $\frl$-action and locally $\frp$-finite. These $M_I(\lambda)$ are the fundamental objects of $\caO^\frp$. In particular, if $I=\emptyset$, then $M(\lambda):=M_I(\lambda)$ is the Verma module with highest weight $\lambda-\rho$ and $\caO^\frp=\caO^\frb$ is the usual Bernstein-Gelfand-Gelfand category $\caO$ \cite{BGG1}. For $\lambda\in\frh^*$, let $\caO^\frp_\lambda$ be the full subcategory of $\caO^\frp$ containing modules $M$ on which $z-\chi_\lambda(z)$ acts as locally nilpotent operator for all $z\in Z(\frg)$. We also write $\caO_\lambda=\caO_\lambda^\frb$ for simplicity.

When $I\subset\Delta$ is fixed, define
\[
{}^IW:=\{w\in W\mid \ell(s_\alpha w)=\ell(w)+1\ \mbox{for all}\ \alpha\in I\}.
\]
For $\mu\in\frh^*$, set
\[
\Phi_\mu:=\{\beta\in\Phi\mid\langle\mu,\beta^\vee\rangle=0\}.
\]
The singularity of $\mu$ is measured by this subsystem $\Phi_\mu$ of $\Phi$. If $\mu$ is integral, there exists a unique anti-dominant weight $\lambda\in W\mu$. Put $J=\{\alpha\in\Delta\mid\langle\lambda, \alpha^\vee\rangle=0\}$ and
\[
W^J:=\{w\in W\mid \ell(w s_\alpha)=\ell(w)+1\ \mbox{for all}\ \alpha\in J\}.
\]
Every integral weight $\mu\in W\lambda\cap\Lambda_I^+$ can be uniquely written in the form $\mu=w_Iw\lambda$ for some $w\in{}^IW^J$, where
\[
{}^IW^J=\{w\in{}^IW\mid \ell(w)+1=\ell(ws_\alpha)\ \mbox{and}\ ws_\alpha\in {}^IW,\ \mbox{for all}\ \alpha\in J\}.
\]

Let $K(\caO)$ be the Grothendieck group of the category $\caO$. In particular, $[M]\in K(\caO)$ is the element corresponding to $M\in\caO$. The module $M$ has a composition series with simple subquotients isomorphic to some $L(\lambda)$. Denote by $[M : L(\lambda)]$ the multiplicity of $L(\lambda)$. The radical filtration of $M$ satisfying $\Rad^iM=M$ for $i\leq0$ and $\Rad^{i}M=\Rad(\Rad^{i-1}M)$ for $i>0$. Similarly, the socle filtration satisfying $\Soc^iM=0$ for $i\leq0$ and $\Soc(M/\Soc^{i}M)=\Soc^{i}M/\Soc^{i-1}M$ for $i>0$. The subquotient $\Rad_iM=\Rad^iM/\Rad^{i+1}M$ is semisimple.
%
%%%%%%%%%%%%%%%%%%%%%%%%%%%%%%%%%%%%%%%%%%%%%%%%%%%%%%%%%%%%%%%%%%%%
%
\section{Graded decomposition numbers and the inverse graded decomposition numbers}
%
%%%%%%%%%%%%%%%%%%%%%%%%%%%%%%%%%%%%%%%%%%%%%%%%%%%%%%%%%%%%%%%%%%%%
%

In this section, we recall the graded decomposition numbers and the inverse graded decomposition numbers for generalized Verma modules. We express those numbers explicitly in terms of the original Kazhdan-Lusztig polynomials. The results in the regular cases are already given in \cite[Theorem 3.11.4]{BGS}. The formulae we present here, which works also for the singular cases, seems not explicitly presented in any papers elsewhere.

%We present proofs sometimes for self-containedness.

\subsection{Koszul algebras and graded BGG resolutions}

Let $\frp=\frp_I$ be the standard parabolic subalgebra of $\frg$ as before, where $I\subset\Delta$. Let $\mu$ be an anti-dominant integral weight such that
\begin{equation}\label{paraI}
I=\bigl\{\alpha\in\Delta\bigm|\langle\mu,\alpha^\vee\rangle=0\bigr\}.
\end{equation}
Let $\lam$ be an anti-dominant integral and we define \begin{equation}\label{paraII}
J:=\bigl\{\alpha\in\Delta\bigm|\langle\lambda,\alpha^\vee\rangle=0\bigr\}.
\end{equation}
We shall denote the subcategory $\caO^\frp_\lam$ simply by $\caO^\mu_\lambda$.
%The category $\caO^\mu_\lambda$ does not depend on the choice of the dominant integral weight $\mu$ (which satisfies (\ref{paraI})) by \cite[Theorem 7.8]{HM19}

The non-isomorphic simple objects in the subcategory $\caO^\mu_\lambda$ are indexed as follows: $$
\bigl\{L(w_Iw\lambda)\bigm|w\in {}^IW^J\bigr\}.
$$
For each $w\in {}^IW^J$, we use $P^\mu(w_Iw\lambda)$ to denote the projective cover of $L(w_Iw\lambda)$ in $\caO^\mu_\lambda$. Then the minimal projective generator of $\caO^\mu_\lambda$ is given by $P_\lambda^\mu:=\oplus_{w\in {}^IW^J}P^\mu(w_Iw\lambda)$. We set $$
A_{\lambda}^\mu:=\End_{\mathfrak{g}}(P_\lambda^\mu).
$$
Then we have an equivalence of categories: $\caO_\lambda^\mu\cong\text{mod-}A_{\lambda}^\mu$.

By \cite{BGS} and \cite{Bac}, we know that the $\mathbb{C}$-algebra $A_{\lambda}^\mu$ is Koszul. The corresponding graded module category (with morphisms being degree $0$ homomorphisms) is denoted by $A_{\lambda}^\mu$\text{-gmod}. For any $M,N\in A_{\lambda}^\mu$\text{-gmod}, we use $\hom_{\caO}(M,N)$ to denote the space of homomorphisms from $M$ to $N$ in $A_{\lambda}^\mu$\text{-gmod}. For each $w\in {}^IW^J$, we use $\mathbb{L}(w_Iw\lambda)$ to denote the graded lift of $L(w_Iw\lambda)$ in $A_{\lambda}^\mu$\text{-gmod} which is concentrated in degree $0$. Let $\mathbb{P}^\mu(w_Iw\lambda)$ to denote the projective cover of $\mathbb{L}(w_Iw\lambda)$ in $A_{\lambda}^\mu$\text{-gmod} which gives a graded lift of $P^\mu(w_Iw\lambda)$. We use $\mathbb{\Delta}^\mu(w_Iw\lambda)$ to denote the graded lift of $M_I(w_Iw\lambda)$ in $A_{\lambda}^\mu$\text{-gmod} such that the canonical surjection $\mathbb{\Delta}^\mu(w_Iw\lambda)\twoheadrightarrow\mathbb{L}(w_Iw\lambda)$ is a degree $0$ map.

Let $q$ be an indeterminate over $\mathbb{Z}$ and $v:=q^{1/2}$. Let $K_0(A_{\lambda}^\mu)$ be the enriched Grothendieck group of
$A_{\lambda}^\mu$\text{-gmod}, which naturally becomes a $\mathbb{Z}[v,v^{-1}]$-module via $v^kM:=M\langle k\rangle$ for any $M\in A_{\lambda}^\mu$\text{-gmod}, where $M\langle k\rangle$ is equal to $M$ upon forgetting its $\mathbb{Z}$-grading and $(M\langle k\rangle)_j:=M_{j-k}$ for any $j\in\mathbb{Z}$.

By definition, $\caO_\lambda^\mu$ is a full subcategory of $\caO_\lambda$. There is a parabolic truncation functor (i.e., Zuckerman functor) $Z_{\frp}: \caO_\lambda\rightarrow \caO_\lambda^\mu$ by $$
Z_{\frp}(M):=\text{the maximal quotient of $M$ which is locally finite over $\frp$}. $$
In particular, $$
Z_{\frp}(M(w_Iw\lambda))=\begin{cases}M_I(w_Iw\lambda), &\text{if $w\in {}^I W$;}\\
0, &\text{otherwise.}
\end{cases}
$$
Let $\epsilon: M(w_Iw\lambda)\twoheadrightarrow M_I(w_Iw\lambda)$ be the canonical surjection.

Recall that $w_I$ is the unique longest element in $W_I$. For each $1\leq k\leq m:=\ell(w_I)=|\Phi_I^+|$, we set $W_I^{k}=\{w\in W_I\ |\ \ell(w)=k\}$. For any $w\in {}^IW^J$, $w_Iw\lambda\in\Lambda_I^+$. Thus $F(\lambda-\rho)$ is a finite dimensional irreducible representation of the reductive Lie algebra $\frl_I$.
By \cite[Chapter 6]{H3}, the irreducible finite dimensional $\frl_I$-module $F(w_Iw\lambda-\rho)$ has a BGG resolution in the category of finite dimensional $U(\frl_I)$-modules (see \cite{BGG2}). Since every $\frl_I$-module can be viewed as a $\frp_I$-module with trivial $\fru_I$-action, we can apply the exact functor $U(\frg)\otimes_{U(\frp_I)}-$ on the resolution and get the exact sequence
\begin{equation}\label{flteq0}
0\rightarrow C_m\stackrel{\delta_{m}}\longrightarrow\ldots\stackrel{\delta_{k+1}}\longrightarrow
C_k\stackrel{\delta_{k}}\longrightarrow\ldots\stackrel{\delta_{1}}\longrightarrow
C_0=M(w_Iw\lambda)\stackrel{\epsilon}\longrightarrow M_I(w_Iw\lambda)\rightarrow 0,
\end{equation}
where $C_k=\bigoplus_{z\in W_I^{k}}M(zw_Iw\lambda)$.

By \cite[\S2.2]{Bac}, $A_{\lambda}^\mu$ can be realized as a $\mathbb{Z}$-graded quotient of $A_{\lambda}$ by a homogeneous ideal $\mathfrak{a}$. It follows that the Zuckerman functor $Z_\frp(-)$ allows a $\mathbb{Z}$-graded lift $\mathbb{Z}_{\frp}: M\mapsto M/\mathfrak{a}M, \forall\,M\in A_\lambda\text{\rm -gmod}$, and the surjection $M(w_Iw\lambda)\twoheadrightarrow M_I(w_Iw\lambda)$ is unique up to a scalar. Hence the surjection $\epsilon$ admits a $\mathbb{Z}$-graded lift $\hat{\epsilon}: \mathbb{\Delta}(w_Iw\lambda)\twoheadrightarrow\mathbb{\Delta}^\mu(w_Iw\lambda)$ which is homogeneous of degree zero.

For any $x,y\in W$, we have $\dim\Hom_{\caO}(M(x\lambda), M(y\lambda))\leq 1$. It follows that each map $\delta_k$ has a $\mathbb{Z}$-graded lift $\hat{\delta}_k$ which is homogeneous of degree one. As a result, we get the following lemma (compare \cite[Proposition A.2]{HM} and \cite[Appendix]{M}).

\begin{lemma}[Graded BGG resolution]\label{bgg} Let $w\in {}^IW^J$ and $\lambda$ an anti-dominant integral weight. There is an exact sequence of homomorphisms in $A_\lambda$\text{\rm -gmod}: \begin{equation}\label{flteq1}
0\rightarrow \mathbb{C}_m\stackrel{\hat{\delta}_{m}}\longrightarrow\ldots\stackrel{\hat{\delta}_{k+1}}\longrightarrow
\mathbb{C}_k\stackrel{\hat{\delta}_{k}}\longrightarrow\ldots\stackrel{\hat{\delta}_{1}}\longrightarrow
\mathbb{C}_0=\mathbb{\Delta}(w_Iw\lambda)\stackrel{\hat{\epsilon}}\longrightarrow\mathbb{\Delta}^\mu(w_Iw\lambda)\rightarrow 0,
\end{equation}
where $\mathbb{C}_k=\bigoplus_{z\in W_I^{k}}\mathbb{\Delta}(zw_Iw\lambda)$, $\hat{\epsilon}$ is homogeneous of degree zero and each map $\hat{\delta}_k$ is homogeneous of degree one.
\end{lemma}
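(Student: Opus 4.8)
The plan is to produce the graded exact sequence \eqref{flteq1} by carefully lifting the ungraded BGG resolution \eqref{flteq0} of $M_I(w_Iw\lambda)$ using the graded Zuckerman functor $\mathbb{Z}_\frp$. First I would recall the ordinary BGG resolution of $M_I(w_Iw\lambda)$ in $\caO_\lambda$: applying $U(\frg)\otimes_{U(\frp_I)}-$ to the finite-dimensional BGG resolution of the $\frl_I$-module $F(w_Iw\lambda-\rho)$ gives \eqref{flteq0}, with $C_k=\bigoplus_{z\in W_I^k}M(zw_Iw\lambda)$ and all maps in $\caO_\lambda$. Since $w\in{}^IW$ and $z\in W_I$, each $zw_Iw$ still lies in the correct coset so every Verma appearing is of the form $M(zw_Iw\lambda)$ with $zw_Iw\in W$, and the composite $\epsilon\colon M(w_Iw\lambda)\twoheadrightarrow M_I(w_Iw\lambda)$ is exactly $Z_\frp$ applied to the identity (using $Z_\frp M(w_Iw\lambda)=M_I(w_Iw\lambda)$).

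Next I would pass to the graded setting. By \cite[\S2.2]{Bac} the functor $Z_\frp$ has the graded lift $\mathbb{Z}_\frp\colon M\mapsto M/\fra M$ on $A_\lambda\text{-gmod}$, which is right exact; applying it to the graded lift of the ungraded resolution restricted to the left portion will give a complex, and I need to check exactness and compute the grading shifts. The key normalization facts already recorded in the excerpt are: (i) $\hom$-spaces between Vermas $M(x\lambda),M(y\lambda)$ are at most one-dimensional, so each $\delta_k$ has a graded lift $\hat\delta_k$ unique up to scalar, and its degree is forced; (ii) the surjection $\epsilon$ lifts to a degree-zero $\hat\epsilon\colon\mathbb{\Delta}(w_Iw\lambda)\twoheadrightarrow\mathbb{\Delta}^\mu(w_Iw\lambda)$. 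So the remaining content is to show each $\hat\delta_k$ is homogeneous of degree \emph{one}. For this I would compare with the Verma-flag structure: in $A_\lambda\text{-gmod}$ the graded lift $\mathbb{\Delta}(y\lambda)$ appears in a graded Verma filtration / BGG resolution with the standard Koszul-grading convention, where an edge of the Bruhat graph contributes a degree-one map. Concretely, $zw_Iw$ for $z\in W_I^k$ satisfies $\ell(zw_Iw)=\ell(w_Iw)-k$ (using $z\in W_I$, $w\in{}^IW$, and the length-additivity $\ell(w_Iw)=\ell(w_I)+\ell(w)$... actually $\ell(zw_I w)=\ell(w_Iw)-\ell(z)$ when $z\in W_I$), so passing from $\mathbb{C}_k$ to $\mathbb{C}_{k-1}$ decreases length by one and the corresponding graded lift is pinned down to degree one by the Koszulity/parity of $A_\lambda$, exactly as in \cite[Proposition A.2]{HM} and \cite[Appendix]{M}.

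The argument for exactness of \eqref{flteq1} I would run as follows: the ungraded sequence \eqref{flteq0} is exact; graded lifts of its terms exist in $A_\lambda\text{-gmod}$ (the $\mathbb{\Delta}(zw_Iw\lambda)\langle k\rangle$ for the right shifts $k$ determined above); the lifted differentials $\hat\delta_k$ exist and are nonzero (their ungraded shadows are the nonzero $\delta_k$), and $\hat\delta_k\hat\delta_{k+1}=0$ because this already holds after forgetting the grading and the graded $\hom$-spaces inject into the ungraded ones. Finally the cohomology of the graded complex, forgotten to the ungraded level, is the cohomology of \eqref{flteq0}, which vanishes except at the right end where it is $M_I(w_Iw\lambda)$; since a graded module that is ungraded-trivial is zero, \eqref{flteq1} is exact. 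I expect the main obstacle to be the bookkeeping of the grading shifts — verifying that the graded lift of $M(zw_Iw\lambda)$ forced by the degree-one differentials is internally consistent around every square of the complex (so that no contradictory shift is required), and matching the convention so that $\hat\epsilon$ lands in degree zero while all $\hat\delta_k$ have degree one. This is precisely the point handled in the cited appendices of \cite{HM} and \cite{M}, and I would reduce to those by invoking the uniqueness (up to scalar) of graded lifts of maps between graded lifts of Vermas together with the Koszul parity $\Ext^i$-vanishing in odd internal degrees for $A_\lambda$.
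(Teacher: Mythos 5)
Your proposal follows the same route the paper takes: lift the ungraded BGG resolution \eqref{flteq0} to $A_\lambda$\text{-gmod}, using the graded Zuckerman functor $\mathbb{Z}_\frp\colon M\mapsto M/\fra M$ to obtain the degree-zero surjection $\hat\epsilon$, and using the one-dimensionality of $\Hom_{\caO}(M(x\lambda),M(y\lambda))$ to lift each $\delta_k$ to a homogeneous $\hat\delta_k$, whose degree is then pinned to $1$ by the Koszul grading normalization; the paper itself states these two points and refers to \cite[Proposition A.2]{HM} and \cite[Appendix]{M} for the remaining bookkeeping. Your version is somewhat more explicit than the paper's (in particular the exactness argument via faithfulness of the forgetful functor, and the length computation $\ell(zw_Iw)=\ell(w_Iw)-\ell(z)$ to pin the shift), but it is the same proof, and your reasoning is sound.
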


Since $A_{\lambda}^\mu$ can be realized as a $\mathbb{Z}$-graded quotient of $A_{\lambda}$, the Grothendieck group $K_0(A_{\lambda}^\mu)$ becomes a $\mathbb{Z}[v, v^{-1}]$-submodule of $K_0(A_{\lambda})$.

\begin{cor}\label{fltprop1}
Let $w\in {}^{I}W^{J}$ and $\lambda$ an anti-dominant integral weight. Then in the Grothendieck group $K_0(A_{\lambda})$ we have
\[
[\mathbb{\Delta}^\mu(w_Iw\lambda)]=\sum_{z\in W_I}(-1)^{\ell(z)}v^{\ell(z)}[\mathbb{\Delta}(zw_Iw\lambda)].
\]
\end{cor}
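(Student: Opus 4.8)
The plan is to deduce Corollary \ref{fltprop1} directly from the graded BGG resolution of Lemma \ref{bgg} by taking the alternating sum of the classes of its terms in $K_0(A_\lambda)$. First I would recall that the Grothendieck group of an exact category is additive on short exact sequences, and hence on any bounded exact complex the alternating sum of the classes of the terms vanishes. Applied to the exact sequence \eqref{flteq1}, this gives
\[
[\mathbb{\Delta}^\mu(w_Iw\lambda)]-[\mathbb{C}_0]+[\mathbb{C}_1]-\cdots+(-1)^m[\mathbb{C}_m]=0
\]
in $K_0(A_\lambda)$, where $\mathbb{C}_0=\mathbb{\Delta}(w_Iw\lambda)$ and $\mathbb{C}_k=\bigoplus_{z\in W_I^k}\mathbb{\Delta}(zw_Iw\lambda)$ for $1\le k\le m$.

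The one subtlety is the bookkeeping of the internal grading shift. The map $\hat\epsilon$ is homogeneous of degree zero, but each $\hat\delta_k$ is homogeneous of degree one; consequently, if one normalizes $\mathbb{C}_0=\mathbb{\Delta}(w_Iw\lambda)$ to sit in its standard grading, then $\mathbb{C}_k$ as it appears in \eqref{flteq1} is the shift $\bigl(\bigoplus_{z\in W_I^k}\mathbb{\Delta}(zw_Iw\lambda)\bigr)\langle k\rangle$ of the standard graded lifts, so that its class in $K_0(A_\lambda)$ equals $v^k\sum_{z\in W_I^k}[\mathbb{\Delta}(zw_Iw\lambda)]$. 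Here I would invoke the $\mathbb{Z}[v,v^{-1}]$-module structure $v^kM=M\langle k\rangle$ on $K_0(A_\lambda)$ recorded earlier, together with the fact that in \eqref{flteq1} the terms $\mathbb{\Delta}(zw_Iw\lambda)$ occurring in $\mathbb{C}_k$ are exactly the standard graded lifts shifted up by $k$ (this is precisely what ``$\hat\delta_k$ homogeneous of degree one'' forces, by an induction on $k$ starting from the degree-zero map $\hat\epsilon$). Substituting and using $\ell(z)=k$ for $z\in W_I^k$, the alternating sum becomes
\[
[\mathbb{\Delta}^\mu(w_Iw\lambda)]=\sum_{k=0}^{m}(-1)^k v^k\sum_{z\in W_I^k}[\mathbb{\Delta}(zw_Iw\lambda)]=\sum_{z\in W_I}(-1)^{\ell(z)}v^{\ell(z)}[\mathbb{\Delta}(zw_Iw\lambda)],
\]
which is the claimed identity, valid a priori in $K_0(A_\lambda)$; since $K_0(A_\lambda^\mu)$ embeds as a $\mathbb{Z}[v,v^{-1}]$-submodule of $K_0(A_\lambda)$, the class $[\mathbb{\Delta}^\mu(w_Iw\lambda)]$ is unambiguous.

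The main obstacle, such as it is, is not a deep one: it is making sure the grading shifts in Lemma \ref{bgg} are tracked consistently so that the coefficient of $[\mathbb{\Delta}(zw_Iw\lambda)]$ comes out as $(-1)^{\ell(z)}v^{\ell(z)}$ rather than some other power of $v$. Once the normalization convention (that $\hat\epsilon$ has degree zero and each $\hat\delta_k$ has degree one, so the $k$-th term carries an internal shift by $k$) is fixed, the computation is the standard Euler-characteristic argument in a graded Grothendieck group. I would therefore present the proof in two short steps: (1) apply additivity of $K_0$ to the bounded exact complex \eqref{flteq1}; (2) identify the class of the $k$-th term as $(-1)^k v^k\sum_{z\in W_I^k}[\mathbb{\Delta}(zw_Iw\lambda)]$ using the grading-shift bookkeeping, and collect terms over $W_I=\bigsqcup_k W_I^k$.
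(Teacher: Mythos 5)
Your proposal is correct and matches the paper's (one-line) proof in spirit: the paper simply says Corollary~\ref{fltprop1} "follows directly from Lemma~\ref{bgg}," and the Euler-characteristic argument in $K_0(A_\lambda)$, with the degree-one maps $\hat\delta_k$ forcing the $k$-th term to contribute with a factor $v^k$, is exactly the intended deduction. Your careful tracking of the internal grading shift is precisely the bookkeeping the paper leaves implicit.
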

\begin{proof} This follows directly from Lemma \ref{bgg}.
\end{proof}

\subsection{Graded decomposition numbers and graded inverse decomposition numbers}

For any $x,y\in W$ with $x\leq y$, we use $P_{x,y}$ to denote the corresponding Kazhdan-Lusztig polynomial introduced in \cite{KL}. For convenience, set $P_{x,y}=0$ when $x\nleq y$ as in \cite{H2}. Let $\lambda$ be an anti-dominant integral weight. By \cite[Theorem 3.11.4(ii)]{BGS}, we know that for any $x\in W^J$ we have  \begin{equation}\label{BorelSing1}
[\mathbb{\Delta}(x\lambda)]=\sum_{y\in W^J}P_{xw_0,yw_0}(v^{-2})v^{\ell(x)-\ell(y)}[\mathbb{L}(y\lambda)] . \end{equation}

The following theorem gives the graded decomposition numbers for arbitrary (possibly singular) integral blocks of the parabolic category $\caO^\frp$ (see \cite[Theorem 3.11.4(ii),(iv)]{BGS}) for the case of regular blocks).

\begin{theorem}\label{graddecomp11} Let $x\in {}^{I}W^{J}$ and $\lambda$ an anti-dominant integral weight. Then in the Grothendieck group $K_0(A_{\lambda}^\mu)$ we have \begin{equation}\label{graddecomp1}
[\mathbb{\Delta}^\mu(w_Ix\lambda)]=\sum_{y\in ^{I}W^J}\sum_{z\in W_I}(-1)^{\ell(z)}v^{\ell(x)-\ell(y)}P_{zw_Ixw_0,w_Iyw_0}(v^{-2})[\mathbb{L}(w_Iy\lambda)] .
\end{equation}
\end{theorem}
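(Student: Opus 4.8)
The plan is to combine the graded BGG resolution of Lemma~\ref{bgg} (equivalently Corollary~\ref{fltprop1}) with the known graded decomposition numbers \eqref{BorelSing1} for Verma modules in the non-parabolic block $\caO_\lambda$. First I would start from Corollary~\ref{fltprop1}, which expresses $[\mathbb{\Delta}^\mu(w_Ix\lambda)]$ as the alternating sum $\sum_{z\in W_I}(-1)^{\ell(z)}v^{\ell(z)}[\mathbb{\Delta}(zw_Ix\lambda)]$ inside $K_0(A_\lambda)$. The point is that each $zw_Ix$ with $z\in W_I$ lies in $W^J$ (since $x\in {}^IW^J\subseteq W^J$ and multiplying on the left by elements of $W_I$ preserves $W^J$ when $x\in {}^IW$, because the condition defining $W^J$ concerns right multiplication by simple reflections in $J$), so formula \eqref{BorelSing1} applies to each term $[\mathbb{\Delta}(zw_Ix\lambda)]$.

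Next I would substitute \eqref{BorelSing1} into the alternating sum. This gives
\[
[\mathbb{\Delta}^\mu(w_Ix\lambda)]=\sum_{z\in W_I}(-1)^{\ell(z)}v^{\ell(z)}\sum_{u\in W^J}P_{zw_Ixw_0,uw_0}(v^{-2})v^{\ell(zw_Ix)-\ell(u)}[\mathbb{L}(u\lambda)].
\]
Here I would use the length identity $\ell(zw_Ix)=\ell(w_Ix)-\ell(z)$ for $z\in W_I$ (valid because $x\in{}^IW$, so $w_Ix$ has maximal length in the coset $W_Ix$, and left-multiplying by $z\in W_I$ decreases length by exactly $\ell(z)$); combined with $\ell(w_Ix)=\ell(w_I)+\ell(x)=m+\ell(x)$, the power of $v$ coming from a fixed $z$ becomes $v^{\ell(z)}\cdot v^{m+\ell(x)-\ell(z)-\ell(u)}=v^{m+\ell(x)-\ell(u)}$, so the $\ell(z)$-dependence of the exponent cancels. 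At this stage I would reindex the outer simple modules: group the $u\in W^J$ according to their image in ${}^IW^J$. Since $A_\lambda^\mu$-gmod is the subcategory of $A_\lambda$-gmod cut out by the ideal $\frak a$, the simple modules $\mathbb{L}(u\lambda)$ that survive in $K_0(A_\lambda^\mu)$ are exactly those with $u\in{}^IW^J$ (equivalently $u=w_Iy$ with $y\in{}^IW^J$); the others get killed by $\mathbb{Z}_\frak p$. Applying $\mathbb{Z}_\frak p$ (the graded Zuckerman truncation, which on Grothendieck groups is the projection onto the $K_0(A_\lambda^\mu)$-summand) and writing the surviving $u$ as $w_Iy$ yields precisely \eqref{graddecomp1}, after noting $v^{m+\ell(x)-\ell(w_Iy)}=v^{m+\ell(x)-m-\ell(y)}=v^{\ell(x)-\ell(y)}$.

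The main obstacle I expect is the bookkeeping around which $u\in W^J$ contribute to $K_0(A_\lambda^\mu)$ and showing that after truncation the only surviving terms are those with $u\in {}^IW^J$ — in other words, justifying that $\mathbb{Z}_\frak p(\mathbb{L}(u\lambda))=0$ unless $u\in{}^IW^J$ and that the sum over all $z\in W_I$ of the remaining coefficients for a fixed $y$ collapses to a single $P$-value via the $(-1)^{\ell(z)}$ cancellation built into the statement. One has to be careful that \eqref{graddecomp1} retains the full alternating sum $\sum_{z\in W_I}(-1)^{\ell(z)}P_{zw_Ixw_0,w_Iyw_0}(v^{-2})$ rather than a single KL polynomial — this is exactly the reflection of the fact that the truncation is applied only to the target simples, not to the source, so the inner alternating sum over $W_I$ (which came from the BGG resolution) remains intact. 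A secondary technical point is confirming the length additivity identities $\ell(w_Ix)=\ell(w_I)+\ell(x)$ and $\ell(zw_Ix)=\ell(w_Ix)-\ell(z)$ for $z\in W_I$, $x\in{}^IW$, and that $zw_Ix\in W^J$ so that \eqref{BorelSing1} is legitimately applicable termwise; these are standard facts about minimal/maximal coset representatives but need to be invoked carefully.
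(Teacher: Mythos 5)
Your proposal follows exactly the same route as the paper's proof: substitute the singular-block graded decomposition formula \eqref{BorelSing1} into the alternating sum of Corollary~\ref{fltprop1}, collapse the $z$-dependence of the exponent via the length identities, and discard the terms with $y\notin w_I\,{}^IW^J$. Two small cautions on details. The claim that $zw_Ix\in W^J$ for all $z\in W_I$ does not follow from $x\in{}^IW$ alone together with the observation that $W^J$ is cut out by a right-multiplication condition --- left multiplication by $W_I$ does not preserve $W^J$ in general. The claim is true, but it uses the full hypothesis $x\in{}^IW^J$ (in the paper's strong sense that $xs_\alpha\in{}^IW$ with $\ell(xs_\alpha)=\ell(x)+1$ for each $\alpha\in J$), which guarantees $\ell(z'xs_\alpha)=\ell(z')+\ell(x)+1$ for all $z'\in W_I$ and $\alpha\in J$, and hence $\ell(zw_Ixs_\alpha)=\ell(zw_Ix)+1$. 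Second, speaking of ``applying $\mathbb{Z}_{\frp}$'' to the identity in $K_0(A_\lambda)$ is not quite legitimate, since the Zuckerman functor is only right exact and so does not descend to a morphism of Grothendieck groups; the argument the paper actually makes (and which your parenthetical about ``projection onto the $K_0(A_\lambda^\mu)$ summand'' already captures) is that $[\mathbb{\Delta}^\mu(w_Ix\lambda)]$ lies in the $\mathbb{Z}[v,v^{-1}]$-submodule $K_0(A_\lambda^\mu)\subseteq K_0(A_\lambda)$ spanned by the $[\mathbb{L}(w_Iy\lambda)]$ with $y\in{}^IW^J$, so comparison of coefficients in the free $\mathbb{Z}[v,v^{-1}]$-module $K_0(A_\lambda)$ forces the unwanted coefficients to vanish.
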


\begin{proof} Applying (\ref{BorelSing1}) to Corollary \ref{fltprop1}, we get that
\[
\begin{aligned}
\left[\mathbb{\Delta}^\mu(w_Ix\lambda)\right]&=\sum_{z\in W_I}\sum_{y\in W^J}(-1)^{\ell(z)}v^{\ell(z)+\ell(zw_Ix)-\ell(y)}P_{zw_Ixw_0,yw_0}(v^{-2})
[\mathbb{L}(y\lambda)]\\
&=\sum_{y\in W^J}\sum_{z\in W_I}(-1)^{\ell(z)}v^{\ell(w_Ix)-\ell(y)}P_{zw_Ixw_0,yw_0}(v^{-2})
[\mathbb{L}(y\lambda)] .
\end{aligned}
\]
For $y\in W^J$ and $w\in {}^{I}W^{J}$, we know that $[M_I(w_Iw\lambda):{L}(y\lambda)]\neq 0$ only if $y\in w_I {}^{I}W^J$. Thus we can restrict the summation on the righthand side of the above equality to those $y\in w_I\, ^{I}W^J$. Finally, note that $\ell(w_Ix)-\ell(w_Iy)=\ell(x)-\ell(y)$ for any $x,y\in {}^IW$. This proves the theorem.
\end{proof}

\begin{definition}\label{pq} Let $x,y\in W^J$ and $\lambda$ an anti-dominant integral weight. Define the Kazhdan-Lusztig-Vogan polynomials (\cite[Theorem 3.11.4]{BGS})
\begin{equation}\label{KLV1}
P_{x, y}^J(q)=\sum_{i\geq0}\dim\Ext_{\caO}^{i}(M(x\lambda), L(y\lambda))q^{(\ell(y)-\ell(x)-i)/2}.
\end{equation}
\end{definition}

By \cite[Theorem 3.11.4]{BGS} (and translating it into our notations), we have for any $x,y\in W^J$, $$
P_{x, y}^J(q)=\sum_{z\in W_J}(-1)^{\ell(z)}P_{xz,y}(q) .
$$

Since $A_\lambda$ is Koszul, each simple module $\mathbb{L}(y\lambda)$ has a linear injective resolution. It follows that $$
[\mathbb{L}(y\lambda)]=\sum_{x\in W^J}\Bigl(\sum_{i\geq 0}(-v)^i\dim\Ext_{\caO}^i(M(x\lambda),L(y\lambda))\Bigr)[\mathbb{\Delta}(x\lambda)].
$$
Applying \cite[Theorem 3.11.4(iv)]{BGS} we can deduce that $\Ext_{\caO}^{i}(M(x\lambda), L(y\lambda))\neq 0$ only if $i\equiv\ell(y)-\ell(x)\pmod{2}$.
As a result, we get the following lemma.

\begin{lemma}\label{inverse1} Let $y\in W^J$ and $\lambda$ an anti-dominant integral weight. Then in the Grothendieck group $K_0(A_{\lambda})$ we have $$\begin{aligned}
\left[{\mathbb{L}}(y\lambda)\right]&=\sum_{x\in W^J}(-1)^{\ell(y)-\ell(x)}v^{\ell(y)-\ell(x)} P_{x,y}^J(v^{-2})[\mathbb{\Delta}(x\lambda)]\\
 &=\sum_{x\in W^J}\sum_{z\in W_J}(-1)^{\ell(y)+\ell(z)-\ell(x)}v^{\ell(y)-\ell(x)}P_{xz,y}(v^{-2})[\mathbb{\Delta}(x\lambda)].
\end{aligned}$$
\end{lemma}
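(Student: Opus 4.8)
The plan is to derive the expansion of $[\mathbb{L}(y\lambda)]$ by Koszul duality, exactly as indicated in the paragraph preceding the statement. The starting point is the equation already written in the text:
\[
[\mathbb{L}(y\lambda)]=\sum_{x\in W^J}\Bigl(\sum_{i\geq 0}(-v)^i\dim\Ext_{\caO}^i(M(x\lambda),L(y\lambda))\Bigr)[\mathbb{\Delta}(x\lambda)],
\]
which holds in $K_0(A_\lambda)$ because $A_\lambda$ is Koszul: the linear injective resolution of $\mathbb{L}(y\lambda)$ has $i$-th term a direct sum of shifted injective hulls $\mathbb{I}(x\lambda)\langle i\rangle$ appearing with multiplicity $\dim\Ext^i_{\caO}(M(x\lambda),L(y\lambda))$ (Koszul duality identifies $\Ext$-groups between standards/simples with the graded pieces of the resolution), and in the Grothendieck group each $[\mathbb{I}(x\lambda)]$ may be replaced by $[\mathbb{\Delta}(x\lambda)]$ up to the linear-duality combinatorics that is already absorbed into the $\Ext$-multiplicities via \cite[Theorem 3.11.4(iv)]{BGS}.

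First I would rewrite the inner sum using the parity statement. By \cite[Theorem 3.11.4(iv)]{BGS} one has $\Ext^i_{\caO}(M(x\lambda),L(y\lambda))\neq 0$ only when $i\equiv \ell(y)-\ell(x)\pmod 2$; hence $(-v)^i = (-1)^{\ell(y)-\ell(x)} v^i$ for every $i$ contributing to the sum, and we may factor $(-1)^{\ell(y)-\ell(x)}$ out. What remains inside is $\sum_{i\geq 0} v^i \dim\Ext^i_{\caO}(M(x\lambda),L(y\lambda))$. Next I would match this against Definition~\ref{pq}: since $P^J_{x,y}(q)=\sum_{i\geq 0}\dim\Ext^i_{\caO}(M(x\lambda),L(y\lambda))\,q^{(\ell(y)-\ell(x)-i)/2}$, substituting $q=v^{-2}$ gives $P^J_{x,y}(v^{-2})=\sum_{i\geq0}\dim\Ext^i_{\caO}(M(x\lambda),L(y\lambda))\,v^{-(\ell(y)-\ell(x)-i)} = v^{-(\ell(y)-\ell(x))}\sum_{i\geq0}\dim\Ext^i_{\caO}(M(x\lambda),L(y\lambda))\,v^{i}$. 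Therefore $\sum_{i\geq0}v^i\dim\Ext^i_{\caO}(\cdots)=v^{\ell(y)-\ell(x)}P^J_{x,y}(v^{-2})$, and plugging back yields the first displayed line of the lemma:
\[
[\mathbb{L}(y\lambda)]=\sum_{x\in W^J}(-1)^{\ell(y)-\ell(x)}v^{\ell(y)-\ell(x)}P^J_{x,y}(v^{-2})[\mathbb{\Delta}(x\lambda)].
\]
The second line is then immediate: substitute the identity $P^J_{x,y}(q)=\sum_{z\in W_J}(-1)^{\ell(z)}P_{xz,y}(q)$ (recalled just above the lemma, with $P_{u,v}=0$ for $u\nleq v$), distribute the sum over $z\in W_J$, and collect the sign $(-1)^{\ell(z)}$ with $(-1)^{\ell(y)-\ell(x)}$ to get the exponent $\ell(y)+\ell(z)-\ell(x)$.

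The main obstacle, and the only genuinely nontrivial point, is the very first step: justifying that the coefficient of $[\mathbb{\Delta}(x\lambda)]$ in $[\mathbb{L}(y\lambda)]$ is exactly the alternating-degree generating function $\sum_i(-v)^i\dim\Ext^i_{\caO}(M(x\lambda),L(y\lambda))$. This is where Koszulity of $A_\lambda$ and the explicit description of graded $\Ext$ from \cite{BGS} are essential: one needs that the linear injective (equivalently, by BGS, tilting/standard-filtered) resolution has its graded layers controlled by these $\Ext$-groups, and that passing to $K_0$ and re-expressing $[\mathbb{I}(x\lambda)]$ in the $[\mathbb{\Delta}]$-basis does not disturb the stated formula — precisely the content of \cite[Theorem 3.11.4]{BGS} read through the Koszul duality dictionary. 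Everything after that is the bookkeeping with the parity congruence and the formula for $P^J_{x,y}$, which I would state but not belabor.
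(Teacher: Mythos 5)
Your proof is correct and takes essentially the same route as the paper: start from the displayed formula $[\mathbb{L}(y\lambda)]=\sum_{x}\bigl(\sum_i(-v)^i\dim\Ext^i_{\caO}(M(x\lambda),L(y\lambda))\bigr)[\mathbb{\Delta}(x\lambda)]$, invoke the parity statement from \cite[Theorem 3.11.4(iv)]{BGS} to pull out $(-1)^{\ell(y)-\ell(x)}$, match the remaining sum with Definition~\ref{pq}, and then expand $P^J_{x,y}$ as $\sum_{z\in W_J}(-1)^{\ell(z)}P_{xz,y}$ for the second line. One caveat: your speculative paragraph about why the starting formula holds is not quite right — in a linear injective resolution of $\mathbb{L}(y\lambda)$ the multiplicity of $\mathbb{I}(x\lambda)\langle i\rangle$ in the $i$-th term is $\dim\Ext^i(L(y\lambda),L(x\lambda))$, not $\dim\Ext^i(M(x\lambda),L(y\lambda))$, and one cannot replace $[\mathbb{I}(x\lambda)]$ by $[\mathbb{\Delta}(x\lambda)]$ in $K_0$; the clean derivation instead uses the graded Euler pairing $\langle[\mathbb{\Delta}(x\lambda)],[\mathbb{\nabla}(y\lambda)]\rangle=\delta_{x,y}$ together with $[\mathbb{\nabla}(y\lambda)]=[\mathbb{\Delta}(y\lambda)]$ and the Koszul linearity — but since the paper also states that formula without proof and your argument for the lemma itself does not depend on these remarks, this does not affect the validity of your proof.
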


By Brauer-Humphreys reciprocity, $[{\mathbb{P}}(y\lambda):{\mathbb{\Delta}}(x\lambda)\langle k\rangle]=[{\mathbb{\Delta}}(x\lambda):{\mathbb{L}}(y\lambda)\langle k\rangle]$ for any $k\in\mathbb{Z}$. We can deduce from Lemma \ref{inverse1} that for any $x\in W^J$, \begin{equation}\label{inverse2}
[{\mathbb{\Delta}}(x\lambda)] =\sum_{x\in W^J}\sum_{z\in W_J}(-1)^{\ell(y)+\ell(z)-\ell(x)}v^{\ell(y)-\ell(x)}P_{xz,y}(v^{-2})
[{\mathbb{P}}(y\lambda)] .
\end{equation}

The following theorem gives the graded inverse decomposition numbers for arbitrary integral blocks of the parabolic category $\caO^\frp$ (see \cite[Theorem 3.11.4(i),(iv)]{BGS} for the case of regular blocks). It seems that this might be known to some experts (cf. \cite[Appendix]{CPS}), but we couldn't find a suitable reference anywhere.

\begin{theorem}\label{inversegraddecomp11} Let $y\in {}^{I}W^{J}$  and $\lambda$ an anti-dominant integral weight. Then in the Grothendieck group $K_0(A_{\lambda}^\mu)$ we have
\begin{equation*}\label{inverse22}
[{\mathbb{L}}(w_Iy\lambda)] =\sum_{x\in ^{I}W^J}\sum_{z\in W_J}(-1)^{\ell(y)+\ell(z)-\ell(x)}v^{\ell(y)-\ell(x)}P_{w_Ixz,w_Iy}(v^{-2})
[{\mathbb{\Delta}}^\mu(w_Ix\lambda)] .
\end{equation*}
\end{theorem}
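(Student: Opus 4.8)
The plan is to combine the regular-block inversion formula in Lemma~\ref{inverse1} with the graded BGG resolution of Corollary~\ref{fltprop1} and then apply the Zuckerman (parabolic truncation) functor, exactly parallel to the way Theorem~\ref{graddecomp11} was deduced from \eqref{BorelSing1}. First I would start from Lemma~\ref{inverse1}, which expresses $[\mathbb{L}(w_Iy\lambda)]$ for $w_Iy\in W^J$ as a $\mathbb{Z}[v,v^{-1}]$-combination of the Verma classes $[\mathbb{\Delta}(x'\lambda)]$, $x'\in W^J$. Since $y\in{}^IW^J$ implies $w_Iy\in W^J$, this gives
\[
[\mathbb{L}(w_Iy\lambda)]=\sum_{x'\in W^J}\sum_{z\in W_J}(-1)^{\ell(w_Iy)+\ell(z)-\ell(x')}v^{\ell(w_Iy)-\ell(x')}P_{x'z,w_Iy}(v^{-2})[\mathbb{\Delta}(x'\lambda)].
\]
Then I would apply the graded Zuckerman functor $\mathbb{Z}_\frp$ (equivalently, pass to the quotient $K_0(A_\lambda^\mu)$ of $K_0(A_\lambda)$ by the ideal $\mathfrak{a}$), under which $[\mathbb{\Delta}(x'\lambda)]\mapsto[\mathbb{\Delta}^\mu(x'\lambda)]$ when $x'\in{}^IW$ and $[\mathbb{\Delta}(x'\lambda)]\mapsto 0$ otherwise, exactly as recorded before Lemma~\ref{bgg}. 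Since $\mathbb{Z}_\frp(\mathbb{L}(w_Iy\lambda))=\mathbb{L}(w_Iy\lambda)$ (this simple already lies in $\caO_\lambda^\mu$), the left-hand side is unchanged, and the sum on the right collapses to those $x'\in W^J\cap{}^IW$.

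The next step is the bookkeeping that rewrites the surviving terms in the indexing set ${}^IW^J$ of the theorem. The key point is the bijection $x\mapsto w_Ix$ from ${}^IW^J$ onto $W^J\cap w_I\,{}^IW$ together with the identity $\ell(w_Ix)=\ell(w_I)+\ell(x)$; but here I would instead group the $W^J\cap{}^IW$-indexed sum differently. Concretely, every $x'\in W^J$ with $\mathbb{\Delta}^\mu(x'\lambda)\neq 0$ can be written $x'=w_Ix$ for a unique $x\in{}^IW^J$ (using that $M_I(w_Ix\lambda)$ has a highest weight lying in $w_I\,{}^IW^J\lambda$, the same restriction used in the proof of Theorem~\ref{graddecomp11}); note $\mathbb{\Delta}^\mu(w_Ix\lambda)$ is precisely the graded generalized Verma module appearing in the statement. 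Substituting $x'=w_Ix$ and using $\ell(w_Iy)-\ell(w_Ix)=\ell(y)-\ell(x)$ for $x,y\in{}^IW$, and likewise absorbing the $w_I$ into the Kazhdan--Lusztig polynomial subscripts to get $P_{w_Ixz,w_Iy}(v^{-2})$, turns the formula into exactly the claimed
\[
[\mathbb{L}(w_Iy\lambda)]=\sum_{x\in{}^IW^J}\sum_{z\in W_J}(-1)^{\ell(y)+\ell(z)-\ell(x)}v^{\ell(y)-\ell(x)}P_{w_Ixz,w_Iy}(v^{-2})[\mathbb{\Delta}^\mu(w_Ix\lambda)].
\]
The parity sign is unaffected since $(-1)^{\ell(w_Iy)-\ell(w_Ix)}=(-1)^{\ell(y)-\ell(x)}$.

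The main obstacle I anticipate is justifying cleanly that applying $\mathbb{Z}_\frp$ to the identity of Lemma~\ref{inverse1} is legitimate at the level of $K_0$ and that it does exactly what is claimed on each class. Two subtleties need care: (i) $\mathbb{Z}_\frp$ is only right exact, so I must argue on Grothendieck groups via the quotient map $K_0(A_\lambda)\twoheadrightarrow K_0(A_\lambda^\mu)$ induced by $M\mapsto M/\mathfrak{a}M$ rather than via an exact functor—this is exactly the setup recorded in the paragraph before Lemma~\ref{bgg}, so it should go through; and (ii) I must be sure which $[\mathbb{\Delta}(x'\lambda)]$ map to zero and which survive, and that the ``only if $y\in w_I\,{}^IW^J$'' multiplicity vanishing cited in the proof of Theorem~\ref{graddecomp11} is the right tool to discard the extraneous terms. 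Once the truncation is pinned down, the remaining length identities and the reindexing $x'=w_Ix$ are routine, and the theorem follows; I would present it as a short proof mirroring that of Theorem~\ref{graddecomp11}.
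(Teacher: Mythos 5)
Your proposal tries to prove the theorem by applying the Zuckerman functor directly to the identity in Lemma~\ref{inverse1}. The key step is the claim that, at the level of Grothendieck groups, $\mathbb{Z}_\frp$ sends $[\mathbb{\Delta}(x'\lambda)]$ to $[\mathbb{\Delta}^\mu(x'\lambda)]$ (or $0$) \emph{and} sends $[\mathbb{L}(w_Iy\lambda)]$ to $[\mathbb{L}(w_Iy\lambda)]$. This step has a genuine gap. First, $\mathbb{Z}_\frp$ is only right exact, so $[M]\mapsto[\mathbb{Z}_\frp M]$ is not a well-defined map on $K_0(A_\lambda)$ at all; and the structure recorded in the paper is that $K_0(A_\lambda^\mu)$ is a \emph{submodule} of $K_0(A_\lambda)$, not a quotient, so there is no canonical map $K_0(A_\lambda)\twoheadrightarrow K_0(A_\lambda^\mu)$ ``induced by $M\mapsto M/\mathfrak{a}M$'' to appeal to. Second, if one instead uses the derived Zuckerman functor (which does act on $K_0$, since it is exact on derived categories), then the claim ``$\mathbb{Z}_\frp(\mathbb{L}(w_Iy\lambda))=\mathbb{L}(w_Iy\lambda)$ because the simple already lies in $\caO_\lambda^\mu$'' only addresses the degree-zero term: the higher $\Tor^{A_\lambda}_i(A_\lambda^\mu, \mathbb{L}(w_Iy\lambda))$ do not vanish in general, and their alternating sum is exactly what must be controlled. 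In fact, define $\pi$ on the Verma basis by $\pi[\mathbb{\Delta}(w_Ix\lambda)]=[\mathbb{\Delta}^\mu(w_Ix\lambda)]$ for $x\in{}^IW^J$ and $\pi=0$ on the remaining Vermas (this is what (a) forces), and define $\pi'$ on the simple basis by $\pi'[\mathbb{L}(w_Iy\lambda)]=[\mathbb{L}(w_Iy\lambda)]$ for $y\in{}^IW^J$ and $\pi'=0$ on the remaining simples; comparing \eqref{BorelSing1} with Theorem~\ref{graddecomp11} shows $\pi\neq\pi'$ (the parabolic graded decomposition numbers carry the extra alternating sum over $W_I$, the Verma ones do not). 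So the statement $\pi[\mathbb{L}(w_Iy\lambda)]=[\mathbb{L}(w_Iy\lambda)]$ is precisely the theorem you are trying to prove; assuming it in the justification step makes the argument circular.

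The paper's proof sidesteps this by never applying the (derived) Zuckerman functor to simples. It first transposes Lemma~\ref{inverse1} by Brauer--Humphreys reciprocity into \eqref{inverse2}, an identity expressing $[\mathbb{\Delta}(x\lambda)]$ in terms of $[\mathbb{P}(y\lambda)]$. Then it applies the derived Zuckerman functor: on projectives the higher derived functors vanish (so $\mathbb{P}(y\lambda)$ goes to $\mathbb{P}^\mu(y\lambda)$ or $0$, a genuine module computation), while on Vermas $\mathbb{\Delta}(w_Ix\lambda)\mapsto\mathbb{\Delta}^\mu(w_Ix\lambda)$ is guaranteed by \cite[Theorem 4.14]{CM}. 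This yields a valid $K_0$-identity relating $[\mathbb{\Delta}^\mu]$ to $[\mathbb{P}^\mu]$, and a second application of Brauer--Humphreys reciprocity (now in $A_\lambda^\mu$) transposes it back into the claimed formula for $[\mathbb{L}]$ in terms of $[\mathbb{\Delta}^\mu]$. You should restructure your argument along these lines, or, if you want to stay in $K_0(A_\lambda)$, substitute Corollary~\ref{fltprop1} into the claimed identity and match coefficients against Lemma~\ref{inverse1} by a Kazhdan--Lusztig polynomial identity; either way the ``apply $\mathbb{Z}_\frp$ to Lemma~\ref{inverse1}'' shortcut needs to be replaced.
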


\begin{proof} Let $x\in {}^{I}W^{J}$. Using \cite[Theorem 4.14]{CM} and applying the derived Zuckerman functor to (\ref{inverse2}) with $x$ replaced by $w_Ix$, we get that \begin{equation*}\label{inverse3}
[{\mathbb{\Delta}}^\mu(w_Ix\lambda] = \sum_{x\in ^{I}W^J}\sum_{z\in W_J}(-1)^{\ell(y)+\ell(z)-\ell(w_Ix)}v^{\ell(y)-\ell(w_Ix)}P_{w_Ixz,y}(v^{-2})
[\mathbb{Z}_{\frp}({\mathbb{P}}(y\lambda))] .
\end{equation*}

Note that $\mathbb{Z}_{\frp}({\mathbb{P}}(y\lambda))\neq 0$ only if $y\in w_I {}^IW^{J}$, and $\mathbb{Z}_{\frp}({\mathbb{P}}(w_Iw\lambda))\cong
{\mathbb{P}}^\mu(w_Iw\lambda)$ for $w\in{}^IW^J$. It follows that
$$
[{\mathbb{\Delta}}^\mu(w_Ix\lambda)] =\sum_{x\in ^{I}W^J}\sum_{z\in W_J}(-1)^{\ell(w)+\ell(z)-\ell(x)}v^{\ell(w)-\ell(x)}P_{w_Ixz,w_Iw}(v^{-2})
[{\mathbb{P}}^\mu(w_Iw\lambda)] .
$$
Equivalently, we get that $$
[{\mathbb{L}}(w_Iy\lambda)] =\sum_{x\in ^{I}W^J}\sum_{z\in W_J}(-1)^{\ell(y)+\ell(z)-\ell(x)}v^{\ell(y)-\ell(x)}P_{w_Ixz,w_Iy}(v^{-2})
[{\mathbb{\Delta}}^\mu(w_Ix\lambda)] .
$$
\end{proof}

\begin{definition} Let $x,y\in {}^IW^J$. We define $$
{}^IP_{x, y}^J(q) = \sum_{z\in W_J}(-1)^{\ell(z)}P_{w_Ixz, w_Iy}(q),\quad\,
{}^IQ_{x, y}^J(q)  = \sum_{z\in W_I}(-1)^{\ell(z)}P_{zw_Ixw_0,w_Iyw_0}(q) .
$$
\end{definition}

It follows from Theorems \ref{graddecomp11} and \ref{inversegraddecomp11} that $$\begin{aligned}
{[}{\mathbb{\Delta}}^{\mu}(w_Ix\lambda){]} & =\sum_{y\in ^{I}W^J}v^{\ell(x)-\ell(y)}{}^IQ_{x, y}^J(v^{-2})[\mathbb{L}(w_Iy\lambda)] ,\\
[{\mathbb{L}}(w_Iy\lambda)] & = \sum_{x\in ^{I}W^J}(-1)^{\ell(y)-\ell(x)}v^{\ell(y)-\ell(x)}{}^IP_{x, y}^J(v^{-2}) [{\mathbb{\Delta}}^\mu(w_Ix\lambda)] .
\end{aligned}
$$
As a result, we get the following corollary.

\begin{cor}\label{gdcor1}
Let $I, J\subset\Delta$ and $x, y\in{}^IW^J$. Then
\[
\sum_{z\in {}^IW^J, x\leq z\leq y}(-1)^{\ell(z)+\ell(y)}{}^IP^J_{x, z}(q){}^IQ^J_{y,z}(q)=\delta_{x, y} .
\]
\end{cor}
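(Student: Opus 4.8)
The plan is to derive the orthogonality relation by substituting one of the two displayed change-of-basis formulae into the other, inside the Grothendieck group $K_0(A_\lambda^\mu)$. First I would start from the first displayed identity just before the corollary, which expresses $[\mathbb{\Delta}^\mu(w_Ix\lambda)]$ in terms of the classes $[\mathbb{L}(w_Iy\lambda)]$ with coefficients $v^{\ell(x)-\ell(y)}\,{}^IQ^J_{x,y}(v^{-2})$, and then replace each $[\mathbb{L}(w_Iy\lambda)]$ by its expansion in the $[\mathbb{\Delta}^\mu(w_Ix'\lambda)]$ given by the second displayed identity. Since the classes $\{[\mathbb{\Delta}^\mu(w_Ix\lambda)]\mid x\in{}^IW^J\}$ form a $\mathbb{Z}[v,v^{-1}]$-basis of $K_0(A_\lambda^\mu)$ (they are a triangular transform of the simple classes, which are visibly a basis), comparing coefficients of $[\mathbb{\Delta}^\mu(w_Ix\lambda)]$ on both sides forces the relation. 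Concretely, after the substitution the coefficient of $[\mathbb{\Delta}^\mu(w_Ix\lambda)]$ is
\[
\sum_{z\in{}^IW^J} v^{\ell(x')-\ell(z)}\,{}^IQ^J_{x',z}(v^{-2})\cdot(-1)^{\ell(z)-\ell(x)}v^{\ell(z)-\ell(x)}\,{}^IP^J_{x,z}(v^{-2}),
\]
which must equal $\delta_{x,x'}$; after cancelling the powers of $v$ (the net power is $v^{\ell(x')-\ell(x)}$, so one may also reindex/relabel so that it becomes $v^0$ on the diagonal) and setting $q=v^{-2}$, this is exactly $\sum_{z}(-1)^{\ell(z)+\ell(x)}\,{}^IP^J_{x,z}(q)\,{}^IQ^J_{x',z}(q)=\delta_{x,x'}$, matching the statement after renaming $x'\to y$ and noting the sign $(-1)^{\ell(x)}=(-1)^{\ell(y)}$ on the diagonal.

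Next I would justify the restriction of the summation range to $\{z\in{}^IW^J\mid x\le z\le y\}$. The polynomial ${}^IP^J_{x,z}(q)=\sum_{w\in W_J}(-1)^{\ell(w)}P_{w_Ixw,w_Iz}(q)$ vanishes unless $w_Ix\le w_Iz$ (i.e.\ $x\le z$ in the Bruhat order on ${}^IW$, using that left multiplication by $w_I$ reverses the order on ${}^IW$, or equivalently that $x\le z$ in ${}^IW^J$), because each $P_{w_Ixw,w_Iz}$ vanishes outside the Bruhat interval; and ${}^IQ^J_{y,z}(q)=\sum_{w\in W_I}(-1)^{\ell(w)}P_{ww_Iyw_0,w_Izw_0}(q)$ vanishes unless $z\le y$ by the same reasoning applied after the order-reversing map $u\mapsto uw_0$. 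Hence only $z$ with $x\le z\le y$ contribute, which both recovers the displayed range and shows the matrices $({}^IP^J_{x,z})$ and $({}^IQ^J_{y,z})$ are unitriangular with respect to Bruhat order (the diagonal entries being $1$), so their product being the identity is consistent.

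The main obstacle I anticipate is bookkeeping rather than conceptual: keeping the $v$-powers and signs straight through the composition, and in particular making sure that the asymmetry between ${}^IP$ (defined via $P_{w_Ixz,w_Iy}$) and ${}^IQ$ (defined via $P_{zw_Ixw_0,w_Iyw_0}$, i.e.\ involving $w_0$ and left multiplication) is exactly the asymmetry produced by the two inversion formulae coming from Theorems~\ref{graddecomp11} and \ref{inversegraddecomp11}. One clean way to sidestep sign headaches is to observe abstractly that both displayed formulae are mutually inverse change-of-basis matrices between the bases $\{[\mathbb{\Delta}^\mu(w_Ix\lambda)]\}$ and $\{[\mathbb{L}(w_Iy\lambda)]\}$ of the free $\mathbb{Z}[v,v^{-1}]$-module $K_0(A_\lambda^\mu)$, so their product is the identity matrix essentially by definition; the corollary is then just the $(x,y)$-entry of this matrix identity, with $q=v^{-2}$ substituted and the diagonal $v$-powers absorbed. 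I would present the argument in this "inverse matrices" form, spelling out the entrywise computation once to pin down the sign $(-1)^{\ell(z)+\ell(y)}$.
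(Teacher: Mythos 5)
Your argument is exactly the paper's (the paper just says ``as a result'' after the two displayed change-of-basis formulae): those formulae express mutually inverse matrices between the bases $\{[\mathbb{\Delta}^\mu(w_Ix\lambda)]\}$ and $\{[\mathbb{L}(w_Iy\lambda)]\}$ of $K_0(A_\lambda^\mu)$, and the corollary is the $(x,y)$-entry of the statement that their product is the identity, with $q=v^{-2}$; the leftover power $v^{\ell(y)-\ell(x)}$ is harmless since it equals $1$ on the diagonal and the off-diagonal entries vanish. Your sign $(-1)^{\ell(z)+\ell(x)}$ and the paper's $(-1)^{\ell(z)+\ell(y)}$ are interchangeable for the same reason, as you observe.

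One point in your justification of the range $x\le z\le y$ needs correcting. You assert that left multiplication by $w_I$ \emph{reverses} the Bruhat order on ${}^IW$; in fact it \emph{preserves} it, being the order-preserving bijection from the shortest to the longest coset representatives of $W_I\backslash W$, and it is this order-preservation that gives $w_Ix\le w_Iz\Leftrightarrow x\le z$. Furthermore, to deduce that $\sum_{w\in W_J}(-1)^{\ell(w)}P_{w_Ixw,w_Iz}$ vanishes term-by-term when $x\nleq z$, you additionally need the observation that $w_Ix$ is the shortest element of its coset $w_IxW_J$ (a consequence of $x\in{}^IW^J$), so that $w_Ixw\ge w_Ix$ for every $w\in W_J$ and hence $w_Ixw\le w_Iz$ for some $w$ already forces $w_Ix\le w_Iz$; dually, for ${}^IQ^J_{y,z}$ one uses that $w_Iy$ is the longest element of $W_Iy$, so $ww_Iy\le w_Iy$ for all $w\in W_I$. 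Both facts are standard and easy to supply; with them in place the unitriangularity and the range restriction follow as you intended, and the rest of the argument is sound.
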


Specializing $v$ to $1$, we can get the ungraded decomposition number and inverse decomposition number of the parabolic category $\caO^\frp$ as follows: $$\begin{aligned}
{[} M_I(w_Ix\lambda) {]} & =\sum_{y\in ^{I}W^J}\sum_{z\in W_I}(-1)^{\ell(z)}P_{zw_Ixw_0,w_Iyw_0}(1) [L(w_Iy\lambda)] ,\\
[L(w_Iy\lambda)] & =\sum_{y\in ^{I}W^J}\sum_{z\in W_J}(-1)^{\ell(y)-\ell(x)+\ell(z)}P_{w_Ixz, w_Iy}(1)[M_I(w_Ix\lambda)] .
\end{aligned}
$$

The polynomial ${}^IP_{x, y}^J(q)$ is the so-called generalized Kazhdan-Lusztig-Vogan polynomials (\cite{BH}, \S9.2). That says,
\begin{equation}\label{ikleq1}
{}^IP_{x, y}^J(q)=\sum_{i\geq0}q^{(\ell(y)-\ell(x)-i)/2}\dim\Ext_{\caO^\frp}^i(M_I(w_Ix\mu), L(w_Iy\mu)).
\end{equation}
In particular, $P_{x, y}:={}^\emptyset P_{x, y}^\emptyset$ is equal to the ordinary Kazhdan-Lusztig polynomial (see for example Theorem 8.11 in \cite{H3}), while ${}^IP_{x, y}:={}^IP_{x, y}^\emptyset$ is equal to the relative (or parabolic) Kazhdan-Lusztig polynomials (\cite{CC}). We also set $P_{x, y}^J:={}^\emptyset P_{x, y}^J$. Similarly set $Q_{x, y}:={}^\emptyset Q_{x, y}^\emptyset$, ${}^IQ_{x, y}:={}^IQ_{x, y}^\emptyset$ and $Q_{x, y}^J:={}^\emptyset Q_{x, y}^J$.

\begin{lemma}[\cite{Dy, Lu, De, So1, I2, BH}]\label{iklprop1}
Let $I, J\subset\Delta$. Then
\begin{itemize}
\item [(1)] $P_{x, y}=P_{x^{-1}, y^{-1}}=P_{w_0xw_0, w_0yw_0}$, where $x, y\in W$.
\item [(2)] ${}^IP_{x, y}=P_{w_Ix, w_Iy}$, where $x, y\in {}^IW$.
\item [(1)] $Q_{x, y}=P_{xw_0,yw_0}=P_{w_0x,w_0y}$, where $x, y\in W$.
\item [(2)] $Q_{x, y}^J=Q_{x, y}$, where $x, y\in W^J$.
\end{itemize}
\end{lemma}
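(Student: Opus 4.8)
In the conventions fixed above, all four items come down, for the most part, to unwinding the definitions of ${}^{I}P_{x,y}^{J}$ and ${}^{I}Q_{x,y}^{J}$; the only genuine input is the pair of classical symmetries of ordinary Kazhdan--Lusztig polynomials asserted in the first item of the ``$P$''~list. So the plan is to prove $P_{x,y}=P_{x^{-1},y^{-1}}$ and $P_{x,y}=P_{w_0xw_0,w_0yw_0}$ first, and then to read off the remaining three assertions formally.

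To prove these two symmetries I would work in the Hecke algebra $\mathcal{H}$ of $(W,S)$ over $\mathbb{Z}[v,v^{-1}]$, with standard basis $\{H_w\}_{w\in W}$, bar involution $\overline{\phantom{x}}$, and Kazhdan--Lusztig basis $\{\underline{H}_w\}$, where $\underline{H}_w=\sum_{x\le w}h_{x,w}H_x$ with $h_{x,w}=v^{\ell(w)-\ell(x)}P_{x,w}(v^{-2})$. Two structural maps of $\mathcal{H}$ are relevant: the standard $\mathbb{Z}[v,v^{-1}]$-linear anti-automorphism $j\colon H_w\mapsto H_{w^{-1}}$ (fixing each $H_s$), and the $\mathbb{Z}[v,v^{-1}]$-linear algebra automorphism $\tau\colon H_w\mapsto H_{w_0ww_0}$ induced by the Coxeter automorphism $w\mapsto w_0ww_0$ of $(W,S)$, which is well defined because conjugation by $w_0$ permutes $S$ (indeed $w_0s_\alpha w_0=s_{-w_0\alpha}$ and $-w_0$ permutes $\Delta$) and which therefore preserves both the length function and the Bruhat order. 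The point is that $j$ and $\tau$ commute with the bar involution (it suffices to check this on the generators $H_s^{\pm 1}$) and send the standard basis to itself triangularly with leading coefficient~$1$; by the uniqueness characterisation of the Kazhdan--Lusztig basis this forces $j(\underline{H}_w)=\underline{H}_{w^{-1}}$ and $\tau(\underline{H}_w)=\underline{H}_{w_0ww_0}$. Comparing the coefficients of $H_x$, respectively $H_{w_0xw_0}$, on the two sides --- and using $\ell(x^{-1})=\ell(x)$, respectively $\ell(w_0xw_0)=\ell(x)$, to match the powers of $v$ --- yields $h_{x,y}=h_{x^{-1},y^{-1}}$ and $h_{x,y}=h_{w_0xw_0,w_0yw_0}$, hence the two asserted identities for $P_{x,y}$. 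This is the only step carrying real content, and the only place that calls for care is verifying that $j$ and $\tau$ respect the bar involution and the triangularity and degree constraints, so that the uniqueness of the Kazhdan--Lusztig basis can be invoked.

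The remaining three items then follow by unwinding definitions. Putting $J=\emptyset$ in ${}^{I}P_{x,y}^{J}(q)=\sum_{z\in W_J}(-1)^{\ell(z)}P_{w_Ixz,w_Iy}(q)$ collapses the sum to the single term $z=e$, so ${}^{I}P_{x,y}={}^{I}P_{x,y}^{\emptyset}=P_{w_Ix,w_Iy}$ for $x,y\in{}^{I}W$; this is the second item of the ``$P$''~list. Putting $I=\emptyset$ in ${}^{I}Q_{x,y}^{J}(q)=\sum_{z\in W_I}(-1)^{\ell(z)}P_{zw_Ixw_0,w_Iyw_0}(q)$, where $W_\emptyset=\{e\}$ and $w_\emptyset=e$, gives ${}^{\emptyset}Q_{x,y}^{J}=P_{xw_0,yw_0}$ for every $J$ and all $x,y\in W^J$; in particular $Q_{x,y}=Q_{x,y}^{\emptyset}=P_{xw_0,yw_0}$, which is the first equality in the first item of the ``$Q$''~list, and since the right-hand side is independent of $J$ this also gives $Q_{x,y}^{J}=Q_{x,y}$, the second item of the ``$Q$''~list. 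Finally, applying the identity $P_{u,w}=P_{w_0uw_0,w_0ww_0}$ proved in the previous step with $u=xw_0$ and $w=yw_0$, and using $w_0^2=e$, gives $P_{xw_0,yw_0}=P_{w_0x,w_0y}$, which is the second equality in the first item of the ``$Q$''~list. This completes the plan.
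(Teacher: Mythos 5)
The paper gives no proof of this lemma --- it is stated as a collection of known facts with citations to the literature --- so there is no paper argument to compare against. Your self-contained proof is correct. The only substantive input is the pair of classical symmetries $P_{x,y}=P_{x^{-1},y^{-1}}=P_{w_0xw_0,w_0yw_0}$, which you obtain in the standard way from the uniqueness characterisation of the Kazhdan--Lusztig basis: the anti-automorphism $H_w\mapsto H_{w^{-1}}$ and the diagram automorphism $H_w\mapsto H_{w_0ww_0}$ both commute with the bar involution, preserve length and Bruhat order, and are triangular with leading coefficient $1$, hence permute the KL basis as expected, and matching coefficients gives the two identities. The remaining three items reduce, as you correctly observe, to specialising $I=\emptyset$ or $J=\emptyset$ in the paper's own definitions of ${}^IP_{x,y}^J$ and ${}^IQ_{x,y}^J$ together with the conventions ${}^IP_{x,y}={}^IP_{x,y}^\emptyset$, $Q_{x,y}={}^\emptyset Q^\emptyset_{x,y}$, $Q^J_{x,y}={}^\emptyset Q^J_{x,y}$ (which collapse the alternating sums to a single term), plus one application of the conjugation symmetry to the pair $(xw_0,yw_0)$. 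The cited references in fact prove something stronger (that these alternating-sum expressions coincide with Deodhar's parabolic Kazhdan--Lusztig polynomials defined intrinsically via Hecke modules), but for the lemma as literally stated, with the paper's own definitions in force, your argument is complete and has no gaps.
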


Let $x\in {}^IW^J$ and $\lambda$ an anti-dominant integral weight. By \cite{BGS} and \cite{Bac}, $A_\lambda^\mu$ is Koszul. The grading filtration on $\mathbb{\Delta}^\mu(w_Ix\lambda)$ coincides with its radical filtration up to a shift. Hence we get the following corollary.

\begin{cor}\label{radcor} Let $\lambda$ be an anti-dominant integral weight and $x, y\in {}^IW^J$. Then we have
\begin{equation}\label{ikleq3}
{}^IQ_{x, y}^J(q)=\sum_{i\geq0}[\Rad_iM_I(w_Ix\lambda) : L(w_Iy\lambda)]q^{(\ell(x)-\ell(y)-i)/2}.
\end{equation}
\end{cor}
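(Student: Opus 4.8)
The plan is to combine the graded decomposition formula for $\mathbb{\Delta}^\mu(w_Ix\lambda)$ with the Koszulity-driven identification of the grading and radical filtrations. First I would recall that because $A_\lambda^\mu$ is Koszul (as stated just above the corollary), the graded module $\mathbb{\Delta}^\mu(w_Ix\lambda)$ is generated in degree $0$ and its grading filtration coincides with its radical filtration up to a shift; concretely, the degree-$i$ part of $\mathbb{\Delta}^\mu(w_Ix\lambda)$, once we normalize so that the head sits in degree $0$, is exactly $\Rad_i M_I(w_Ix\lambda)$ as an $\frl_I$-semisimple $\frg$-module. Hence the coefficient of $v^{\ell(x)-\ell(y)-i}$ — equivalently the coefficient of $q^{(\ell(x)-\ell(y)-i)/2}$ after substituting $q = v^{-2}$ appropriately — in the graded multiplicity $[\mathbb{\Delta}^\mu(w_Ix\lambda) : \mathbb{L}(w_Iy\lambda)]$ records precisely $[\Rad_i M_I(w_Ix\lambda) : L(w_Iy\lambda)]$.

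Next I would read off the graded multiplicity from the displayed identity
\[
[\mathbb{\Delta}^\mu(w_Ix\lambda)] = \sum_{y\in {}^IW^J} v^{\ell(x)-\ell(y)}\,{}^IQ^J_{x,y}(v^{-2})\,[\mathbb{L}(w_Iy\lambda)],
\]
which was derived right after Theorem \ref{inversegraddecomp11} from Theorem \ref{graddecomp11} and the definition of ${}^IQ^J_{x,y}$. So the graded multiplicity of $\mathbb{L}(w_Iy\lambda)$ in $\mathbb{\Delta}^\mu(w_Ix\lambda)$ is exactly $v^{\ell(x)-\ell(y)}\,{}^IQ^J_{x,y}(v^{-2})$. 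Writing ${}^IQ^J_{x,y}(q) = \sum_{j\geq 0} c_j q^j$ and substituting $q = v^{-2}$ gives $\sum_j c_j v^{\ell(x)-\ell(y)-2j}$, so $c_j$ is the number of copies of $\mathbb{L}(w_Iy\lambda)$ sitting in graded degree $j$ of $\mathbb{\Delta}^\mu(w_Ix\lambda)$ (with the normalization making the head degree $0$). By the Koszulity statement just invoked, that graded degree $j$ is the radical layer $\Rad_j$, so $c_j = [\Rad_j M_I(w_Ix\lambda) : L(w_Iy\lambda)]$. Re-indexing by $i = j$ and noting $j = (\ell(x)-\ell(y)-i)/2$ when $i=j$ — wait, more carefully: the exponent of $q$ attached to the layer $\Rad_i$ is $(\ell(x)-\ell(y)-i)/2$, and this equals $j$ exactly when $i = \ell(x)-\ell(y)-2j$; since all copies of $\mathbb{L}(w_Iy\lambda)$ in a Koszul standard module lie in a single homogeneous shift pattern, $i$ and $j$ determine each other and the sum
\[
{}^IQ^J_{x,y}(q) = \sum_{i\geq 0}[\Rad_i M_I(w_Ix\lambda) : L(w_Iy\lambda)]\,q^{(\ell(x)-\ell(y)-i)/2}
\]
follows, which is exactly \eqref{ikleq3}.

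The one genuinely delicate point — and the step I expect to be the main obstacle — is the precise bookkeeping of the grading shift: pinning down that the normalization under which $\mathbb{\Delta}^\mu(w_Ix\lambda)$ has head in degree $0$ is consistent with the convention in which the displayed decomposition formula is written (where $\mathbb{L}(w_Iy\lambda)$ is concentrated in degree $0$ and $\hat\epsilon$ is degree $0$), and that under this normalization the grading filtration equals the radical filtration on the nose rather than up to an overall shift that would alter the exponent of $q$. This amounts to checking that the standard module in a Koszul category is ``standard Koszul'' in the relevant sense, i.e. that $\mathbb{\Delta}^\mu(w_Ix\lambda)$ has a linear projective resolution / is generated in degree $0$ with radical filtration matching the grading, which is part of the $\cite{BGS}$--$\cite{Bac}$ package and was already recorded in the sentence preceding the corollary. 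Once that shift is fixed correctly, the identity \eqref{ikleq3} is just the substitution $q = v^{-2}$ into the graded multiplicity combined with the semisimplicity of each $\Rad_i M_I(w_Ix\lambda)$.
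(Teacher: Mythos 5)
Your proposal is correct and is precisely the argument the paper intends: the paper gives no separate proof beyond noting (just before the corollary) that $A_\lambda^\mu$ is Koszul so the grading filtration on $\mathbb{\Delta}^\mu(w_Ix\lambda)$ coincides with its radical filtration, and then reading off \eqref{ikleq3} from the identity $[\mathbb{\Delta}^\mu(w_Ix\lambda)]=\sum_{y}v^{\ell(x)-\ell(y)}\,{}^IQ^J_{x,y}(v^{-2})[\mathbb{L}(w_Iy\lambda)]$ by substituting $q=v^{-2}$. Your mid-proof hesitation about the grading shift is resolved correctly (the degree-$0$ surjection $\hat\epsilon$ pins the head in degree $0$, so the offset vanishes), and the final bookkeeping $i=\ell(x)-\ell(y)-2j$ matches the stated formula.
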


We need the following result about ${}^IW^J$.

\begin{prop}[{\cite[Proposition 2.4.2]{BN}}]\label{bnprop1}
Let $I, J\subset\Delta$.
\begin{itemize}
  \item [(\rmnum{1})] There is a bijection ${}^If^J: {}^IW^J\ra {}^JW^{-w_0I}$ given by ${}^If^J(w)=w_Jw^{-1}w_Iw_0$.
  \item [(\rmnum{2})] There is a bijection ${}^Ig^J: {}^IW^J\ra {}^{-w_0J}W^{I}$ given by ${}^Ig^J(w)=w_0w_Jw^{-1}w_I$.
\end{itemize}
\end{prop}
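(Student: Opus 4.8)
The plan is to treat this as a purely combinatorial assertion about minimal double‑coset representatives, prove part (\rmnum{1}) directly, and then deduce part (\rmnum{2}) from it by conjugating with $w_0$. Throughout I use that $-w_0$ permutes $\Delta$, that $w_{-w_0A}=w_0w_Aw_0$ for any $A\subseteq\Delta$, that conjugation by $w_0$ is a length‑preserving automorphism of the Coxeter system sending $s_\alpha$ to $s_{-w_0\alpha}$, and the standard descent facts $\ell(ws_\gamma)>\ell(w)\Leftrightarrow w(\gamma)>0$ and $\ell(s_\gamma w)>\ell(w)\Leftrightarrow w^{-1}(\gamma)>0$ for simple $\gamma$; I also use that the longest element $w_A$ of $W_A$ negates $\Phi_A^+$ and that any element of $W_A$ permutes $\Phi^+\setminus\Phi_A$.

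For $A,B\subseteq\Delta$ set ${}^{A}f^{B}(w):=w_Bw^{-1}w_Aw_0$, first viewed as a map $W\to W$. Since inversion and one‑sided translations are bijections of $W$, so is ${}^{A}f^{B}$, and a one‑line computation with the involutions $w_A,w_B,w_0$ shows $({}^{A}f^{B})^{-1}(x)=w_Aw_0x^{-1}w_B$; rewriting this via $w_{-w_0A}=w_0w_Aw_0$ gives the clean identity $({}^{A}f^{B})^{-1}=c_{w_0}\circ{}^{B}f^{-w_0A}$, where $c_{w_0}(x):=w_0xw_0$. Because $c_{w_0}$ preserves length and sends $s_\alpha\mapsto s_{-w_0\alpha}$, each of the three defining conditions of ${}^{A}W^{B}$ (namely $w^{-1}(\alpha)>0$ for $\alpha\in A$; $w(\beta)>0$ for $\beta\in B$; $ws_\beta\in{}^{A}W$ for $\beta\in B$) is transported verbatim, so $c_{w_0}$ restricts to a bijection ${}^{A}W^{B}\to{}^{-w_0A}W^{-w_0B}$. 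Granting the well‑definedness statement ``\emph{${}^{A}f^{B}$ maps ${}^{A}W^{B}$ into ${}^{B}W^{-w_0A}$ for all $A,B$}'', part (\rmnum{1}) is immediate: ${}^{I}f^{J}$ sends ${}^{I}W^{J}$ into ${}^{J}W^{-w_0I}$ and is injective there (it is a restriction of a bijection of $W$), and its inverse $c_{w_0}\circ{}^{J}f^{-w_0I}$ sends ${}^{J}W^{-w_0I}$ into $c_{w_0}\bigl({}^{-w_0I}W^{-w_0J}\bigr)={}^{I}W^{J}$ (apply the well‑definedness statement to the pair $(J,-w_0I)$, then $c_{w_0}$); hence ${}^{I}f^{J}\colon{}^{I}W^{J}\to{}^{J}W^{-w_0I}$ is a bijection. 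For (\rmnum{2}), note $w_0w_Jw^{-1}w_I=c_{w_0}\bigl({}^{I}f^{J}(w)\bigr)$, so ${}^{I}g^{J}=c_{w_0}\circ{}^{I}f^{J}$ is a composite of the bijection of (\rmnum{1}) with the bijection $c_{w_0}\colon{}^{J}W^{-w_0I}\to{}^{-w_0J}W^{I}$, hence a bijection ${}^{I}W^{J}\to{}^{-w_0J}W^{I}$.

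Thus the whole proof reduces to the well‑definedness statement, and its crux — the step I expect to be the main obstacle — is the separation property, which I call $(\star)$: if $w\in{}^{A}W^{B}$ then $w(\Phi_B)\cap\Phi_A=\emptyset$. I would prove $(\star)$ in two stages. First, for $\beta\in B$ simple: if $w(\beta)\in\Phi_A$, then $w(\beta)>0$ (since $w\in W^B$), so $w(\beta)=\sum_{\alpha\in A}c_\alpha\alpha$ with $c_\alpha\in\bbZ_{\ge0}$; applying $w^{-1}$ writes the simple root $\beta$ as $\sum_\alpha c_\alpha w^{-1}(\alpha)$, a non‑negative combination of the positive roots $w^{-1}(\alpha)$, $\alpha\in A$ (positive since $w\in{}^{A}W$), and comparing heights forces a single term with $c_\alpha=1$ and $w(\beta)=\alpha\in A$; but then $(ws_\beta)^{-1}(\alpha)=s_\beta w^{-1}(\alpha)=s_\beta(\beta)=-\beta<0$ contradicts the clause $ws_\beta\in{}^{A}W$. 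Second, for general $\delta\in\Phi_B^+$, peel off a simple root $\delta=\beta+\delta'$ with $\beta\in B$ and $\delta'\in\Phi_B^+\cup\{0\}$: by the first step $w(\beta)$ is a positive root outside $\Phi_A$, so it has a positive coefficient on some simple root $\eta\notin A$, and $w(\delta')$ has non‑negative coefficients, so $w(\delta)=w(\beta)+w(\delta')$ still has positive $\eta$‑coefficient and lies outside $\Phi_A$. This is precisely the point where the nonstandard extra clause ``$ws_\alpha\in{}^{I}W$'' in the definition of ${}^{I}W^{J}$ is essential — without it $w(\beta)$ could land in $\Phi_A$ and $(\star)$ would fail.

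Finally I would feed $(\star)$ into the three conditions for ${}^{B}W^{-w_0A}$. Fix $w\in{}^{A}W^{B}$, put $x:={}^{A}f^{B}(w)$ so $x^{-1}=w_0w_Aww_B$, and write $\bar\gamma:=-w_C(\gamma)$ for the (again simple) image of a simple root $\gamma$ under the longest element of the relevant $W_C$. (a) For $\beta\in B$: $x^{-1}(\beta)=-w_0w_Aw(\bar\beta)$, and since $w(\bar\beta)$ is a positive root outside $\Phi_A$ by $(\star)$ we get $w_Aw(\bar\beta)>0$ and hence $x^{-1}(\beta)>0$, i.e.\ $x\in{}^{B}W$. (b) For $\gamma=-w_0(\alpha)\in-w_0A$: $x(\gamma)=w_Bw^{-1}(\bar\alpha)$, and since $w^{-1}(\bar\alpha)$ is a positive root (as $w\in{}^{A}W$) lying outside $\Phi_B$ (by $(\star)$ in the form $w^{-1}(\Phi_A)\cap\Phi_B=\emptyset$), $w_Bw^{-1}(\bar\alpha)>0$, so $x(\gamma)>0$. (c) For the same $\gamma$: $xs_\gamma\in{}^{B}W$ means $s_\gamma x^{-1}(\beta)>0$ for all $\beta\in B$, which follows from (a) as long as $x^{-1}(\beta)\ne\gamma$; but $x^{-1}(\beta)=\gamma$ would force $w(\bar\beta)=-\bar\alpha<0$, contradicting $w\in W^B$. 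Hence $x\in{}^{B}W^{-w_0A}$, which establishes the well‑definedness statement and therefore the proposition.
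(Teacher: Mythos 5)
The paper does not prove this proposition; it cites it directly from Boe--Nakano (their Proposition 2.4.2), so there is no in-paper argument to compare yours against. That said, I checked your self-contained proof carefully and it is correct. The reduction of (\rmnum{2}) to (\rmnum{1}) via $c_{w_0}$ and the identity $({}^{A}f^{B})^{-1}=c_{w_0}\circ{}^{B}f^{-w_0A}$ are clean and verify by direct computation using that $w_A,w_B,w_0$ are involutions and $w_{-w_0A}=w_0w_Aw_0$. The separation property $(\star)$ is indeed the crux: your two-stage argument is sound -- the height comparison in the first stage pins down $w(\beta)=\alpha\in A$, and the extra clause $ws_\beta\in{}^{A}W$ in the definition of ${}^{A}W^{B}$ is exactly what rules this out via $(ws_\beta)^{-1}(\alpha)=-\beta<0$; the second stage correctly propagates the nonvanishing $\eta$-coefficient through $w(\delta)=w(\beta)+w(\delta')$ using that $w\in W^B$ sends $\Phi_B^+$ into $\Phi^+$. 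Your verification that $x={}^{A}f^{B}(w)$ satisfies all three defining conditions of ${}^{B}W^{-w_0A}$ is also correct, including the observation in (c) that $x^{-1}(\beta)=\gamma$ would force $w(\bar\beta)<0$. One small stylistic point: your argument in (b) really uses $(\star)$ for general $\delta\in\Phi_B^+$ (since $w^{-1}(\bar\alpha)$ need not be simple), while (a) and (c) only use the simple-root case; so the second stage of $(\star)$ is not optional, and it is worth saying so explicitly. Overall this is a clean elementary proof of a result the paper merely quotes.
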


\begin{cor}\label{iklcor1}
Let $x, y\in {}^IW^J$. Then
\[
{}^IQ^J_{x,y}={}^{-w_0J}P^I_{{}^Ig^J(x),{}^Ig^J(y)}.
\]
\end{cor}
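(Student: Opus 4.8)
The plan is to unwind both sides of the claimed identity
\[
{}^IQ^J_{x,y}={}^{-w_0J}P^I_{{}^Ig^J(x),{}^Ig^J(y)}
\]
into sums of ordinary Kazhdan--Lusztig polynomials using the definitions given just above, and then to match the two sums term-by-term via the bijection ${}^Ig^J$ together with the symmetry properties of $P_{u,v}$ in Lemma \ref{iklprop1}. On the left, by definition ${}^IQ^J_{x,y}(q)=\sum_{z\in W_I}(-1)^{\ell(z)}P_{zw_Ixw_0,w_Iyw_0}(q)$; note that as $z$ ranges over $W_I$, the element $zw_I$ also ranges over $W_I$, and $\ell(z)=\ell(w_I)-\ell(zw_I)$, so up to the overall sign $(-1)^{\ell(w_I)}$ we may rewrite the left side as $(-1)^{\ell(w_I)}\sum_{z'\in W_I}(-1)^{\ell(z')}P_{z'xw_0,w_Iyw_0}(q)$. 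On the right, writing $x'={}^Ig^J(x)=w_0w_Jx^{-1}w_I$ and $y'={}^Ig^J(y)=w_0w_Jy^{-1}w_I$, which lie in ${}^{-w_0J}W^{I}$ by Proposition \ref{bnprop1}(\rmnum{2}), the definition of ${}^{-w_0J}P^I$ unfolds (using Lemma \ref{iklprop1}(2), the identity ${}^KP_{u,v}=P_{w_Ku,w_Kv}$ for $u,v\in{}^KW$ with $K=-w_0J$, and $w_{-w_0J}=w_0w_Jw_0$) into a single KL-polynomial of the form $P_{w_{-w_0J}x',w_{-w_0J}y'}(q)=P_{w_0w_Jw_0\cdot w_0w_Jx^{-1}w_I,\,w_0w_Jw_0\cdot w_0w_Jy^{-1}w_I}(q)=P_{w_0x^{-1}w_I,\,w_0y^{-1}w_I}(q)$.

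Next I would reconcile the fact that the left side is manifestly a sum over $W_I$ while the right side is a single polynomial: this is exactly the content of the parabolic/relative KL identity $\sum_{z\in W_I}(-1)^{\ell(z)}P_{zu,v}=P_{w_Iu,v}$ valid when $w_Iu$ and $v$ lie in appropriate coset representatives, which is a form of Lemma \ref{iklprop1}(2) (the Deodhar--Soergel relation between ordinary and relative KL polynomials). Applying this to $u=xw_0$ and $v=w_Iyw_0$ collapses the left side to (a sign times) $P_{w_I\cdot xw_0,\,w_Iyw_0}(q)$ — though one must be careful about which element absorbs the $w_I$; the cleaner route is to use $\sum_{z\in W_I}(-1)^{\ell(z)}P_{z xw_0,w_Iyw_0}=\sum_{z}(-1)^{\ell(z)}P_{w_0 w_I z w_0 \cdot w_0xw_0,\, w_0w_Iyw_0\cdot w_0w_0}$ after applying $P_{u,v}=P_{w_0uw_0,w_0vw_0}$ from Lemma \ref{iklprop1}(1), converting the $W_I$-sum on the left factor into a $W_{-w_0I}$-sum, which then collapses. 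The upshot should be ${}^IQ^J_{x,y}(q)=P_{w_0 x^{-1}w_I,\,w_0 y^{-1}w_I}(q)$ after also invoking the inversion symmetry $P_{u,v}=P_{u^{-1},v^{-1}}$ from Lemma \ref{iklprop1}(1) to turn $P_{zw_Ixw_0,w_Iyw_0}$ into $P_{w_0x^{-1}w_Iz,\,w_0y^{-1}w_I}$.

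Carrying this out, both sides reduce to $P_{w_0x^{-1}w_I,\,w_0y^{-1}w_I}(q)$ (on the right via the computation $w_{-w_0J}\,{}^Ig^J(x)=w_0x^{-1}w_I$ already noted, together with $P_{w_0 x^{-1} w_I, w_0 y^{-1} w_I}$ being well-defined since $w_0x^{-1}w_I,\,w_0y^{-1}w_I\in {}^{-w_0I}W$ — precisely the condition making the relative KL polynomial an ordinary one), and comparing them finishes the proof. The main obstacle I anticipate is purely bookkeeping: keeping straight the four flavours of parabolic subsets ($I$, $J$, $-w_0I$, $-w_0J$) and their longest elements under conjugation by $w_0$, and verifying that at each collapse step the relevant elements genuinely lie in the minimal-length coset representative sets $^{I}W$, $^{J}W$, etc., so that Lemma \ref{iklprop1}(2) applies; the identities $w_0 w_I w_0 = w_{-w_0 I}$, $\ell(w_0uw_0)=\ell(u)$, and $(^IW)^{-1}={}^{}W^I$ are the load-bearing facts and each application must be checked against the hypothesis $x,y\in{}^IW^J$. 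Once the coset memberships are confirmed, the sign tracking ($(-1)^{\ell(z)}$ versus $(-1)^{\ell(w_Iz)}$, and the overall $(-1)^{\ell(w_I)}$ factors on the two sides) should cancel cleanly, since $\ell(w_I)=\ell(w_{-w_0I})$.
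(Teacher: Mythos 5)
Your overall plan (unwind both sides via the definitions, then match using the bijection ${}^Ig^J$ and the symmetries in Lemma \ref{iklprop1}) is the right strategy, and it is indeed what the paper does. But the execution contains a definitional error that derails the argument.

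The central mistake is the claim that ${}^{-w_0J}P^I_{x',y'}$ ``unfolds into a single KL polynomial'' $P_{w_{-w_0J}x',\,w_{-w_0J}y'}$. This is not what the definition says. With the two-superscript notation of Definition~3.9, one has by definition
\[
{}^{-w_0J}P^I_{x',y'}=\sum_{z\in W_I}(-1)^{\ell(z)}\,P_{w_{-w_0J}x'z,\;w_{-w_0J}y'},
\]
a genuine alternating sum over $W_I$. The identity ${}^KP_{u,v}=P_{w_Ku,w_Kv}$ from Lemma \ref{iklprop1}(2) applies only to ${}^KP_{u,v}:={}^KP^{\emptyset}_{u,v}$ (empty second superscript), not to ${}^KP^{I}_{u,v}$ with $I\neq\emptyset$. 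So the right-hand side of the corollary is \emph{not} a single polynomial, and there is nothing to ``reconcile'' between a sum on the left and a single polynomial on the right: both sides are sums over $W_I$ from the start.

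Consequently, the auxiliary collapse identity you invoke, $\sum_{z\in W_I}(-1)^{\ell(z)}P_{zu,v}=P_{w_Iu,v}$, is both unnecessary and false in general. It would assert equality of Deodhar's $q=-1$ and $q=0$ parabolic KL polynomials, which is well known to fail; Lemma \ref{iklprop1}(2) does not say this, for the reason above. If one carries out your plan honestly, the ``collapsed'' single-polynomial expressions on each side are simply not equal to the quantities they are supposed to represent, so agreement of the two (wrong) reductions does not prove the corollary.

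The correct (and much shorter) argument keeps both sides as $W_I$-sums and matches them summand by summand: starting from the definition of ${}^IQ^J_{x,y}$, apply $P_{u,v}=P_{u^{-1},v^{-1}}$ to rewrite each term as $P_{w_0x^{-1}w_Iz,\;w_0y^{-1}w_I}$ (the change $z\mapsto z^{-1}$ is a length-preserving bijection of $W_I$, so the sum is unchanged), then insert $w_0=w_{J'}w_0w_J$ where $J'=-w_0J$ (using $w_{J'}=w_0w_Jw_0$) to rewrite each term as $P_{w_{J'}\,{}^Ig^J(x)\,z,\;w_{J'}\,{}^Ig^J(y)}$. Summing over $z\in W_I$ with the sign $(-1)^{\ell(z)}$ then literally gives the definition of ${}^{J'}P^I_{{}^Ig^J(x),{}^Ig^J(y)}$, with no collapse required. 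You should redo the proof along these lines.
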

\begin{proof}
Let $J'=-w_0J$. Then $J'\subset\Delta$. By Lemma \ref{iklprop1} and Definition \ref{pq} we have
\[
\begin{aligned}
{}^IQ_{x, y}^J=&\sum_{z\in W_I}(-1)^{\ell(z)}P_{zw_Ixw_0, w_Iyw_0}=\sum_{z\in W_I}(-1)^{\ell(z)}P_{w_0x^{-1}w_Iz, w_0y^{-1}w_I}\\
=&\sum_{z\in W_I}(-1)^{\ell(z)}P_{w_{J'}w_0w_Jx^{-1}w_Iz, w_{J'}w_0w_Jy^{-1}w_I}\\
=&\sum_{z\in W_I}(-1)^{\ell(z)}P_{w_{J'}{}^Ig^J(x)z, w_{J'}{}^Ig^J(y)}={}^{J'}p^I_{{}^Ig^J(x),{}^Ig^J(y)}.
\end{aligned}
\]
\end{proof}

By Definition \ref{pq}, ${}^IQ_{x, y}^J$ is a polynomial in $q$. Combining this with Corollary \ref{radcor}, we obtain the following parity property on the  radical filtration of generalized Verma modules.

\begin{lemma}\label{ikllem2}
Let $x, y\in {}^IW^J$  and $\lambda$ an anti-dominant integral weight. The radical filtrations of generalized Verma modules satisfy the parity property, that is, $[\Rad_iM_I(w_Ix\lambda), L(w_Iy\lambda)]=0$ unless $\ell(x)-\ell(y)\equiv i (\Mod\ 2)$.
\end{lemma}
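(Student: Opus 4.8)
The plan is to deduce the parity property directly from the two facts already in hand: Corollary~\ref{radcor}, which expresses the graded multiplicities of $L(w_Iy\lambda)$ in the radical layers of $M_I(w_Ix\lambda)$ as the coefficients of the Laurent polynomial
\[
{}^IQ_{x, y}^J(q)=\sum_{i\geq0}[\Rad_iM_I(w_Ix\lambda) : L(w_Iy\lambda)]\,q^{(\ell(x)-\ell(y)-i)/2},
\]
and the observation (noted in the line preceding the statement) that ${}^IQ_{x, y}^J(q)$ is an \emph{honest polynomial in $q$} (a nonnegative-integer power series in $q$), not a Laurent series in $q^{1/2}$. The point is that the exponent $(\ell(x)-\ell(y)-i)/2$ appearing on the right-hand side must therefore be a nonnegative integer whenever the coefficient $[\Rad_iM_I(w_Ix\lambda) : L(w_Iy\lambda)]$ is nonzero. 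In particular $\ell(x)-\ell(y)-i$ must be even, which is exactly the asserted congruence $\ell(x)-\ell(y)\equiv i\pmod 2$.

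Concretely, I would argue as follows. First, recall from the Definition preceding Corollary~\ref{iklcor1} that ${}^IQ_{x, y}^J(q)=\sum_{z\in W_I}(-1)^{\ell(z)}P_{zw_Ixw_0,w_Iyw_0}(q)$, a $\mathbb{Z}$-linear combination of Kazhdan--Lusztig polynomials, hence a polynomial in $q$ with integer coefficients and no fractional powers. (Alternatively, one may invoke Corollary~\ref{iklcor1}, identifying ${}^IQ_{x, y}^J$ with a generalized KLV polynomial ${}^{-w_0J}P^I_{{}^Ig^J(x),{}^Ig^J(y)}(q)$, which is manifestly a polynomial in $q$ by \eqref{ikleq1}.) Second, apply Corollary~\ref{radcor}: writing $d_i:=[\Rad_iM_I(w_Ix\lambda) : L(w_Iy\lambda)]\in\mathbb{Z}^{\geq0}$, we have $\sum_{i\geq0}d_i\,q^{(\ell(x)-\ell(y)-i)/2}={}^IQ_{x, y}^J(q)\in\mathbb{Z}[q]$. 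Third, collect terms: group the indices $i$ by the value of $(\ell(x)-\ell(y)-i)/2$. The coefficient of $q^j$ on the left-hand side is $\sum_{i:\ \ell(x)-\ell(y)-i=2j} d_i$, and if $\ell(x)-\ell(y)-i$ is odd for some $i$ with $d_i\neq 0$, then that term contributes a half-integer power of $q$ with a strictly positive coefficient (all $d_i\geq 0$, so there is no cancellation among the odd-exponent terms). This contradicts ${}^IQ_{x, y}^J(q)\in\mathbb{Z}[q]$. Hence $d_i=0$ unless $\ell(x)-\ell(y)-i$ is even, i.e. unless $\ell(x)-\ell(y)\equiv i\pmod 2$.

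The only subtlety — and the one point worth stating carefully rather than the ``main obstacle'' (there is no serious obstacle here, the lemma is an immediate corollary) — is the no-cancellation observation: since the radical-layer multiplicities $d_i$ are nonnegative integers, the half-integer-power monomials $d_i\,q^{(\ell(x)-\ell(y)-i)/2}$ with $\ell(x)-\ell(y)-i$ odd cannot cancel each other, as they all have the same sign. This is what lets us conclude from ``the total sum is a polynomial in $q$'' that ``each offending individual term vanishes.'' One should also remark that the exponents $(\ell(x)-\ell(y)-i)/2$, as $i$ ranges over those indices with $d_i\neq 0$, are automatically bounded below (they are at most $(\ell(x)-\ell(y))/2$ and decrease as $i$ grows), so there is no issue of an infinite sum; in fact $\Rad_iM_I(w_Ix\lambda)=0$ for $i$ large, so only finitely many $d_i$ are nonzero. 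With these remarks the argument is complete.
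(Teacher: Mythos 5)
Your argument is correct and is exactly the paper's intended reasoning: ${}^IQ_{x,y}^J(q)$ is an honest polynomial in $q$ (a signed sum of Kazhdan--Lusztig polynomials, or equivalently a generalized KLV polynomial by Corollary~\ref{iklcor1}), while Corollary~\ref{radcor} writes it as $\sum_i d_i\,q^{(\ell(x)-\ell(y)-i)/2}$ with $d_i\geq 0$, so the nonnegativity of the $d_i$ rules out cancellation among half-integer powers and forces $d_i=0$ whenever $\ell(x)-\ell(y)-i$ is odd. The paper leaves the no-cancellation remark implicit in ``combining this with Corollary~\ref{radcor}''; you have simply made that small but necessary observation explicit.
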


Suppose that $\nu\in\Lambda$ is a regular anti-dominant integral weight. Let $T_{\nu}^{\lambda}(-)$, $T_{\lambda}^{\nu}(-)$ be the translation functors as defined in \cite{J2} and \cite[\S7.1]{H3}. By \cite{Str}, we know that those translation functors allow $\mathbb{Z}$-graded lifts. Let $\Theta_\nu^\lambda(-)$, $\Theta_\lambda^\nu(-)$ be their corresponding graded lift respectively. We choose these graded lifts in the same way as \cite{Str} and \cite[\S2.2]{CM}. In particular,
\begin{equation}\label{gdeq1}
\Theta_\nu^\lambda(\mathbb{L}(x\nu))=\begin{cases}\mathbb{L}(x\lambda)\langle -\ell(w_J)\rangle,\quad&\text{for}\ x\in W^J;\\
0, &\text{for}\ x\not\in W^J.
\end{cases}
\end{equation}
We also have $\Theta_\lambda^\nu(\mathbb{P}(x\lambda))=\mathbb{P}(x\nu)$ for $x\in W^J$.

\begin{prop}[{\cite[Theorem 4.3]{CM}}]\label{gdprop1}
Let $\lambda,\nu$ be defined as above and let $z\in {}^IW$. Then
\[
\Theta^\lambda_\nu(\mathbb{\Delta}^\mu(w_Iz\nu))=\begin{cases}\mathbb{\Delta}^\mu(w_Iy\lambda)\langle \ell(x)-\ell(w_J)\rangle, &\text{if $z=yx$, $y\in {}^IW^J, x\in W_J$};\\
0, &\text{otherwise}.
\end{cases}
\]
\end{prop}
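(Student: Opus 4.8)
This is \cite[Theorem 4.3]{CM} transcribed into our notation; since the ingredients are all at hand, the plan is to give a direct argument from Corollary \ref{fltprop1}, the action \eqref{gdeq1} of the graded translation functor on simples, and the classical isomorphism $T^\lambda_\nu M(w\nu)\cong M(w\lambda)$ for all $w\in W$ (\cite[\S7.6]{H3}). First I would reduce the graded statement to a grading shift. Applying the exact functor $T^\lambda_\nu$ to the canonical surjection $M(w_Iz\nu)\twoheadrightarrow M_I(w_Iz\nu)$ exhibits $T^\lambda_\nu M_I(w_Iz\nu)$ as a quotient of $M(w_Iz\lambda)$ lying in $\caO^\frp$, hence as a quotient of $Z_\frp M(w_Iz\lambda)$, which equals $M_I(w_Iz\lambda)$ when $w_Iz\lambda\in\Lambda_I^+$ and is $0$ otherwise. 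A short root computation ($\langle w_Iz\lambda,\alpha^\vee\rangle=-\langle\lambda,(z^{-1}\beta)^\vee\rangle$ with $\beta=-w_I\alpha\in\Phi_I^+$, together with $z\in{}^IW$ and $\Phi_\lambda=\Phi_J$) shows that $w_Iz\lambda\in\Lambda_I^+$ exactly when $\Phi_I^+\cap z\Phi_J=\emptyset$, and one checks this condition is equivalent to the existence of a factorization $z=yx$ with $y\in{}^IW^J$, $x\in W_J$, $\ell(z)=\ell(y)+\ell(x)$ (in which case $y$ is the shortest element of $zW_J$ and $w_Iz\lambda=w_Iy\lambda$). Thus in the ``otherwise'' case $T^\lambda_\nu M_I(w_Iz\nu)=0$, hence $\Theta^\lambda_\nu\mathbb{\Delta}^\mu(w_Iz\nu)=0$; in the remaining case $T^\lambda_\nu M_I(w_Iz\nu)$ is a quotient of the indecomposable module $M_I(w_Iy\lambda)$, and it remains to see this quotient is an isomorphism and to determine the grading shift.

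For the generic case I would compute the graded class. The graded translation of an ordinary standard module is $\Theta^\lambda_\nu\mathbb{\Delta}(w\nu)\cong\mathbb{\Delta}(w_1\lambda)\langle\ell(w_2)-\ell(w_J)\rangle$, where $w=w_1w_2$ with $w_1\in W^J$, $w_2\in W_J$ and $\ell(w)=\ell(w_1)+\ell(w_2)$: the underlying module is $M(w_1\lambda)$ by the ungraded fact, and the shift is forced by \eqref{gdeq1} and \eqref{BorelSing1} (for instance by tracking the graded multiplicity of $\mathbb{L}(e\lambda)$, using $P_{x,w_0}=1$), or may be quoted from \cite{Str, CM}. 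Applying $\Theta^\lambda_\nu$ to Corollary \ref{fltprop1} (with $\nu$ in place of the anti-dominant weight) gives
\[
[\Theta^\lambda_\nu\mathbb{\Delta}^\mu(w_Iz\nu)]=\sum_{z'\in W_I}(-1)^{\ell(z')}v^{\ell(z')}[\Theta^\lambda_\nu\mathbb{\Delta}(z'w_Iz\nu)] .
\]
When $z=yx$ as above, the decisive combinatorial fact is that $z'w_Iy\in W^J$ and $\ell(z'w_Iyx)=\ell(z'w_Iy)+\ell(x)$ for every $z'\in W_I$: the first because $y$ and $ys$ lie in ${}^IW$ with $\ell(ys)=\ell(y)+1$ for all $s\in J$, so $\ell(z'w_I\cdot ys)=\ell(z'w_I)+\ell(y)+1$; the second because $z'w_Iy\in W^J$ and $x\in W_J$. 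Hence $\Theta^\lambda_\nu\mathbb{\Delta}(z'w_Iz\nu)\cong\mathbb{\Delta}(z'w_Iy\lambda)\langle\ell(x)-\ell(w_J)\rangle$, and substituting and re-applying Corollary \ref{fltprop1} (now for $y\in{}^IW^J$ and the weight $\lambda$) yields $[\Theta^\lambda_\nu\mathbb{\Delta}^\mu(w_Iz\nu)]=v^{\ell(x)-\ell(w_J)}[\mathbb{\Delta}^\mu(w_Iy\lambda)]$. Specializing $v\mapsto1$, the modules $T^\lambda_\nu M_I(w_Iz\nu)$ and $M_I(w_Iy\lambda)$ have the same composition factors with multiplicities; since the former is a quotient of the latter, the quotient map is an isomorphism, and then the computed graded class forces $\Theta^\lambda_\nu\mathbb{\Delta}^\mu(w_Iz\nu)\cong\mathbb{\Delta}^\mu(w_Iy\lambda)\langle\ell(x)-\ell(w_J)\rangle$, which is the assertion (note $\ell(x)=\ell(z)-\ell(y)$).

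I expect the main obstacle to be the bookkeeping rather than any conceptual point: pinning down the grading shift in the formula for $\Theta^\lambda_\nu$ on ordinary standard modules, and the root-system argument identifying ``$z$ admits the factorization $z=yx$'' with ``$w_Iz\lambda\in\Lambda_I^+$'' (equivalently, with $Z_\frp M(w_Iz\lambda)\neq0$). Granted those two points, the case $z=yx$ follows by transporting Corollary \ref{fltprop1} through $\Theta^\lambda_\nu$, and the complementary case by the vanishing of $Z_\frp M(w_Iz\lambda)$.
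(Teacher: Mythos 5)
Your proof is correct but takes a genuinely different route from the paper. The paper treats this proposition as a pure notational translation of \cite[Theorem 4.3]{CM}: it introduces primed data $y'=w_Iyw_0$, $x'=w_0xw_0$, $z'=w_Izw_0$, $\nu'=w_0\nu$, $\lambda'=w_0\lambda$, invokes Proposition \ref{bnprop1} to verify that these satisfy the hypotheses of the cited theorem (in particular $y'\in{}^IW^{J'}w_{J'}$ for $J'=-w_0J$), checks the innocuous length identities $\ell(x')=\ell(x)$ and $\ell(w_{J'})=\ell(w_J)$, and reads off the answer; the ``otherwise'' clause is obtained by running the same dictionary backwards. You instead rederive the Coulembier--Mazorchuk formula in the paper's own conventions from lower-level ingredients: first the ungraded part --- $T^\lambda_\nu M_I(w_Iz\nu)$ is a quotient of $Z_\frp M(w_Iz\lambda)$, which vanishes unless $w_Iz\lambda\in\Lambda_I^+$, a condition you identify combinatorially with the existence of the factorization $z=yx$ --- and then the grading, by pushing Corollary \ref{fltprop1} through $\Theta^\lambda_\nu$ using the formula for $\Theta^\lambda_\nu$ on ordinary standard objects and the length additivity $\ell(z'w_Iyx)=\ell(z'w_Iy)+\ell(x)$. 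The paper's route is much shorter but requires trusting the $w_0$-conjugation dictionary (dominant vs.\ anti-dominant conventions, the coset normalizations in \cite{CM}); your route is self-contained given the preceding material of Section 3, at the cost of the two bookkeeping points you flag yourself: the shift in $\Theta^\lambda_\nu\mathbb{\Delta}(w\nu)\cong\mathbb{\Delta}(w_1\lambda)\langle\ell(w_2)-\ell(w_J)\rangle$ (which, as you say, can be extracted from \eqref{gdeq1} and \eqref{BorelSing1}, or simply quoted), and the root-combinatorial equivalence between $\Phi_I^+\cap z\Phi_J=\emptyset$ and the $y\in{}^IW^J$, $x\in W_J$ factorization. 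Two small points worth making explicit if you write this up: (a) the surjection $M_I(w_Iy\lambda)\twoheadrightarrow T^\lambda_\nu M_I(w_Iz\nu)$ is an isomorphism because both sides have the same finite Jordan--H\"older length after your $v\mapsto1$ specialization; (b) passing from equality of graded classes $[\Theta^\lambda_\nu\mathbb{\Delta}^\mu(w_Iz\nu)]=v^{\ell(x)-\ell(w_J)}[\mathbb{\Delta}^\mu(w_Iy\lambda)]$ to an isomorphism of graded modules uses that $M_I(w_Iy\lambda)$ is indecomposable, so its graded lift is unique up to a shift, which is then pinned down by the class.
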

\begin{proof}
We need to explain the notation difference. First assume that $z=yx$ for $y\in {}^IW^J$ and $x\in W_J$. Denote $y'=w_Iyw_0$ and $x'=w_0xw_0$. Thus $x'\in W_{J'}$, where $J'=-w_0J$. Proposition \ref{bnprop1} implies that $y'\in {}^IW^{J'}w_{J'}$. It is easy to see that $\ell(x')=\ell(x)$ and $\ell(w_{J'})=\ell(w_{J})$. Set $z'=w_Izw_0$, $\nu'=w_0\nu$ and $\lambda'=w_0\lambda'$. Then $\nu'$ is a regular dominant weight and $\lambda'$ is a dominant weight with $\Phi_{\lambda'}=\Phi_{J'}$. With $z'\nu'=w_Iz\nu$ and $z'=y'x'$, \cite[Theorem 4.3]{CM} yields
\[
\Theta^\lambda_\nu(\mathbb{\Delta}^\mu(w_Iz\nu))=\Theta^\lambda_\nu(\mathbb{\Delta}^\mu(z'\nu'))=\mathbb{\Delta}^\mu(y'\lambda')\langle \ell(x')-\ell(w_{J'})\rangle=\mathbb{\Delta}^\mu(w_Iy\lambda)\langle \ell(x)-\ell(w_J)\rangle.
\]
The above argument also shows that $z=yx$ for $y\in {}^IW^J$ and $x\in W_J$ if and only if $z'=y'x'$ for $y'\in {}^IW^{J'}w_{J'}$ and $x'\in W_{J'}$, which implies the second part of the Proposition.
\end{proof}

We remark that Theorem \ref{inversegraddecomp11} can also be deduced by applying Proposition \ref{gdprop1} to the formulae of the graded inverse decomposition numbers in the regular case (\cite[Theorem 3.11.4]{BGS}).

Define a map $t_\nu^\lambda: K(\caO_\nu)\ra K(\caO_\lambda)$ such that $t_\nu^\lambda([M])=[T_\nu^\lambda M]$ for $M\in\caO_\nu$. The exactness of $T_\nu^\lambda$ insures that $t_\nu^\lambda$ is a linear transformation on $K(\caO_\nu)$. Thus (\ref{gdeq1}) and Proposition \ref{gdprop1} gives the following corollary, which we need in the next section.

\begin{cor}\label{gdcor2}
Let $\lambda,\nu$ be defined as above. Then
\begin{itemize}
  \item [(1)] $t^\lambda_\nu([\Rad^i M(z\nu)])=[\Rad^{i}M(z\lambda)]$ for $i\in\bbZ$ and $z\in W^J$.
  \item [(2)] $t^\lambda_\nu([M(z\nu)])=[M(z\lambda)]$ for $z\in W$.
\end{itemize}
\end{cor}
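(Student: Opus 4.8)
The plan is to deduce both parts from the graded statements in Proposition \ref{gdprop1} and equation (\ref{gdeq1}) by specializing the grading to $v=1$, then matching up the radical filtration with the grading filtration via Koszulity. First I would recall that for the parabolic category (with $I=\emptyset$ here, so we are in the ordinary block $\caO_\lambda$), the algebra $A_\lambda$ is Koszul by \cite{BGS, Bac}, so the grading filtration on each $\mathbb{\Delta}(z\lambda)$ coincides with its radical filtration up to the appropriate shift; the same applies to $\mathbb{\Delta}(z\nu)$ in the regular block $\caO_\nu$. Concretely, $[\Rad_i M(z\lambda):L(y\lambda)]$ is read off from the coefficient of $v^{\ell(z)-\ell(y)-i}$ in $[\mathbb{\Delta}(z\lambda)]$ expanded in the $[\mathbb{L}(y\lambda)]$, i.e.\ from $P^J_{z,y}(v^{-2})$ by Lemma \ref{iklprop1} and Corollary \ref{radcor} specialized to $I=\emptyset$.

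For part (2), the key input is that $\Theta_\lambda^\nu$ is exact and sends $\mathbb{P}(z\lambda)$ to $\mathbb{P}(z\nu)$ for $z\in W^J$; dually, by Proposition \ref{gdprop1} with $I=\emptyset$ one has $\Theta^\lambda_\nu(\mathbb{\Delta}(z\nu))=\mathbb{\Delta}(z\lambda)\langle -\ell(w_J)\rangle$ when $z\in W^J$ (taking $y=z$, $x=e$). Forgetting the grading, this says $T_\nu^\lambda M(z\nu)\cong M(z\lambda)$ for $z\in W^J$. For general $z\in W$, write $z=yx$ with $y\in W^J$ and $x\in W_J$: if $x\neq e$ the character computation gives $T_\nu^\lambda M(z\nu)=0$ unless one is careful — but in fact what we want is the cruder statement on $K(\caO_\nu)$. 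Actually for $z\notin W^J$ one has $T_\nu^\lambda M(z\nu)$ need not vanish at the level of modules, but Proposition \ref{gdprop1} (ungraded) still yields that $[T_\nu^\lambda M(z\nu)]$ equals $[M(y\lambda)]$ when $z=yx$, and in particular $t_\nu^\lambda([M(z\nu)])=[M(z\lambda)]$ exactly when $z\in W^J$; so I would state part (2) as holding for $z\in W^J$ rather than all of $W$, or else observe that $t_\nu^\lambda$ as defined is only the induced map and the claim is literally the $v\mapsto 1$ shadow of Proposition \ref{gdprop1} restricted appropriately. Since the corollary as written says $z\in W$, I will interpret it using the translation-through-the-wall identification that for $z\notin W^J$ the relevant $M(z\nu)$ already does not occur, and carry the forgetful functor through.

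For part (1), I would apply the exact functor $T_\nu^\lambda$ to the short exact sequences defining the radical filtration of $M(z\nu)$. Exactness gives a filtration of $T_\nu^\lambda M(z\nu)=M(z\lambda)$ (for $z\in W^J$) whose $i$-th layer is $T_\nu^\lambda(\Rad_i M(z\nu))$. The point is to identify this induced filtration with the radical filtration of $M(z\lambda)$. This is where the graded lift is essential: $\Theta_\nu^\lambda$ applied to the linear (Koszul) structure on $\mathbb{\Delta}(z\nu)$ sends the grading layer in degree $i$ to the grading layer of $\mathbb{\Delta}(z\lambda)$ in degree $i$ (up to the global shift $\langle -\ell(w_J)\rangle$), because $\Theta_\nu^\lambda$ is a functor of graded modules taking simples to simples concentrated in a single degree, as in (\ref{gdeq1}). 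Under Koszulity the grading layer $=$ radical layer, so $t_\nu^\lambda([\Rad_i M(z\nu)])=[\Rad_i M(z\lambda)]$ as claimed. I expect the main obstacle to be the bookkeeping of the grading shift $\ell(w_J)$ and verifying that the induced filtration on $M(z\lambda)$ is genuinely the radical filtration and not merely a filtration with semisimple layers of the right composition factors; resolving this cleanly requires invoking that $\Theta_\nu^\lambda$ sends a linear complex to a linear complex, i.e.\ the compatibility of translation functors with the Koszul grading from \cite{Str, CM}, rather than a bare exactness argument.
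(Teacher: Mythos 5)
Your argument for part (1) follows essentially the same route as the paper: consider the graded lift $\Theta_\nu^\lambda$ (shifted by $\ell(w_J)$ so that it sends $\mathbb{\Delta}(z\nu)\mapsto\mathbb{\Delta}(z\lambda)$ and $\mathbb{L}(z\nu)\mapsto\mathbb{L}(z\lambda)$), note that it is an exact $\mathbb{Z}$-graded functor, and then use Koszulity of $A_\nu$ and $A_\lambda$ to identify the grading filtrations of $\mathbb{\Delta}(z\nu)$ and $\mathbb{\Delta}(z\lambda)$ with their radical filtrations. The one thing the paper spells out more carefully, and which you only gesture at, is why the filtration induced by $\mathcal{T}$ is exactly the radical filtration rather than merely a filtration with the same semisimple layers: the paper shows $\mathcal{T}(\Rad^i\mathbb{\Delta}(z\nu))\subseteq\Rad^i(\mathbb{\Delta}(z\lambda))$ from the degree bookkeeping, and then forces equality because $\mathcal{T}$ is exact and sends $\mathbb{\Delta}(z\nu)$ onto $\mathbb{\Delta}(z\lambda)$. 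You correctly flag this as the point requiring care.

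Part (2) contains a genuine error. You claim $t_\nu^\lambda([M(z\nu)])=[M(z\lambda)]$ ``exactly when $z\in W^J$'' and suggest that the corollary should be restated with $z\in W^J$, even musing that for $z\notin W^J$ the module $M(z\nu)$ ``already does not occur.'' Neither is correct. For any $z\in W$ write $z=yx$ with $y\in W^J$ and $x\in W_J$. Since $W_J$ is by definition the stabilizer of the anti-dominant weight $\lambda$, one has $x\lambda=\lambda$, hence $z\lambda=yx\lambda=y\lambda$, so $M(z\lambda)=M(y\lambda)$. Thus the ungraded form of Proposition \ref{gdprop1}, which gives $[T_\nu^\lambda M(z\nu)]=[M(y\lambda)]$, is precisely $[M(z\lambda)]$, and the statement holds for \emph{all} $z\in W$. (This is also the content of \cite[Theorem 7.6]{H3}: $T_\nu^\lambda M(z\nu)\cong M(z\lambda)$ for every $z\in W$, with no restriction to $W^J$.) The confusion likely stems from conflating the vanishing condition for $\mathbb{Z}_{\frp}$ or for the simple $\mathbb{L}(z\nu)$ with the behavior of Verma modules under translation onto a wall, which never vanishes.
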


\begin{proof} (1) We consider the graded functor $\mathcal{T}:=\Theta^\lambda_\nu\langle\ell(w_J)\rangle$. Applying Proposition \ref{gdprop1}, we see that
$\mathcal{T}(\mathbb{\Delta}(z\nu))=\mathbb{\Delta}(z\lambda)$ and $\mathcal{T}(\mathbb{L}(z\nu))=\mathbb{L}(z\lambda)$. Since $\mathcal{T}$ is a $\mathbb{Z}$-graded functor and the grading filtrations of $\mathbb{\Delta}(z\nu)$ and $\mathbb{\Delta}(z\lambda)$ coincide with their radical filtration up to a shift, it follows that for any $j\geq 0$, $$
\mathcal{T}\bigl(\Rad^i(\mathbb{\Delta}(z\nu))\bigr)=\mathcal{T}\bigl(\oplus_{j\geq i}\mathbb{\Delta}(z\nu)_j\bigr)\subseteq \oplus_{j\geq i}\mathbb{\Delta}(z\lambda)_j=\Rad^i(\mathbb{\Delta}(z\lambda)).
$$
Combining this with the equality $\mathcal{T}(\mathbb{\Delta}(z\nu))=\mathbb{\Delta}(z\lambda)$ and the fact that $\mathcal{T}$ is an exact functor, we can deduce that all the above inclusion are actually equalities. This proves $\mathcal{T}\bigl(\Rad^i(\mathbb{\Delta}(z\nu))\bigr)=\Rad^i(\mathbb{\Delta}(z\lambda))$ which implies (1).

(2) This is an easy consequence of Proposition \ref{gdprop1} or Theorem 7.6 in \cite{H3}.
\end{proof}

The above results can also be found in \cite{I2}.

%
%%%%%%%%%%%%%%%%%%%%%%%%%%%%%%%%%%%%%%%%%%%%%%%%%%%%%%%%%%%%%%%%%%%%
%
\section{The sum formula of radical filtrations}
%
%%%%%%%%%%%%%%%%%%%%%%%%%%%%%%%%%%%%%%%%%%%%%%%%%%%%%%%%%%%%%%%%%%%%
%

In this section, we shall give a sum formula about radical filtrations of generalized Verma modules. It can be viewed as generalization of the Jantzen sum formula for Verma modules. For $\mu, \nu\in\frh^*$, we write $\nu\leq\mu$ if $\Hom_\caO(M(\nu), M(\mu))\neq0$. This gives a partial ordering which can be viewed as the Bruhat ordering on $\frh^*$ (\cite[\S2]{ES}).

Since each Verma module in the regular block of the usual BGG category $\caO$ is rigid, its Jantzen filtration coincides with its grading filtration up to a shift \cite{BB}. When $\mu\in\frh^*$ is regular, the following proposition is essentially equivalent to Jantzen sum formula (\cite{J2}) for Verma module $M(\mu)$ in the usual BGG category $\caO$. We need more effort when $\mu$ is singular.

\begin{prop}\label{sfprop1}
Let $\mu\in\frh^*$.
\[
\sum_{i>0}[\Rad^{i}M(\mu)]=\sum_{\beta\in\Phi^+, s_\beta \mu<\mu}[M(s_\beta \mu)].
\]
\end{prop}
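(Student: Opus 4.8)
The plan is to reduce the singular case to the known regular case via translation functors, using the graded machinery already set up. First I would treat the regular case: when $\mu$ is regular we may write $\mu = x\nu$ for a regular anti-dominant weight $\nu$ (after a shift we may assume $\mu$ itself is anti-dominant-regular up to the $W$-action) and $x\in W$. Here the statement is, as remarked, essentially the Jantzen sum formula \cite{J2}: the Jantzen filtration $M(\mu)=M(\mu)^0\supseteq M(\mu)^1\supseteq\cdots$ satisfies $\sum_{i>0}[M(\mu)^i]=\sum_{\beta\in\Phi^+,\,s_\beta\mu<\mu}[M(s_\beta\mu)]$, and by Beilinson--Bernstein \cite{BB} the Jantzen filtration of a regular Verma module coincides with its radical filtration up to the standard index shift, so $[M(\mu)^i]=[\Rad^i M(\mu)]$ for $i>0$. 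This disposes of the regular case.

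Next I would pass to the singular case. Fix $\mu\in\frh^*$; replacing $\mu$ by a $W$-translate does not change either side (the right-hand side is manifestly $W$-invariant once we read $s_\beta\mu<\mu$ as the Bruhat order on $\frh^*$, and $[\Rad^i M(\mu)]$ only depends on the block and the element of the orbit), so I may assume $\mu$ lies in the same orbit as an anti-dominant integral $\lambda$ — write $\mu=z\lambda$ with $z\in W^J$, where $J=\{\alpha\in\Delta\mid\langle\lambda,\alpha^\vee\rangle=0\}$. Choose a regular anti-dominant $\nu$ and apply Corollary \ref{gdcor2}: the linear map $t_\nu^\lambda\colon K(\caO_\nu)\to K(\caO_\lambda)$ satisfies $t_\nu^\lambda([\Rad^i M(z\nu)])=[\Rad^i M(z\lambda)]$ for $z\in W^J$ and $t_\nu^\lambda([M(w\nu)])=[M(w\lambda)]$ for all $w\in W$. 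Applying $t_\nu^\lambda$ to the already-established regular identity
\[
\sum_{i>0}[\Rad^i M(z\nu)]=\sum_{\beta\in\Phi^+,\,s_\beta z\nu<z\nu}[M(s_\beta z\nu)]
\]
gives
\[
\sum_{i>0}[\Rad^i M(z\lambda)]=\sum_{\beta\in\Phi^+,\,s_\beta z\nu<z\nu}[M(s_\beta z\lambda)].
\]
It remains to rewrite the right-hand side so that the summation condition becomes $s_\beta(z\lambda)<z\lambda$ and the summands become honest Verma modules in $\caO_\lambda$ indexed correctly.

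The main obstacle is precisely this bookkeeping on the right-hand side: passing from $\nu$ to $\lambda$ can collapse the orbit, so for a reflection $\beta$ with $s_\beta z\nu<z\nu$ the weight $s_\beta z\lambda$ need not be of the form $w\lambda$ with $w\in W^J$, and different $\beta$'s can give the same $s_\beta z\lambda$ or give something on a wall. I would handle this by splitting the sum over $\beta$ according to whether $s_\beta z\nu$ and $z\nu$ lie in the same left $W_J$-coset (equivalently whether $s_\beta$ fixes $z\lambda$, i.e. $\beta\in z\Phi_J z^{-1}$, so that $s_\beta z\lambda=z\lambda$ — these contribute terms $[M(z\lambda)]$) versus the complementary $\beta$'s. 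Using $t_\nu^\lambda([M(w\nu)])=[M(w\lambda)]$ and the fact (Corollary \ref{gdcor2}(2) / \cite[Theorem 7.6]{H3}) that $[M(w\nu)]\mapsto[M(w\lambda)]$ for all $w\in W$ without needing $w\in W^J$, the right-hand side becomes $\sum_\beta [M(s_\beta z\lambda)]$ where now $s_\beta z\lambda$ is interpreted via its unique $W^J$-minimal coset representative and equalities $M(w\lambda)\cong M(w'\lambda)$ in $\caO_\lambda$ are used when $w\equiv w'$ in $W/W_J$. A short combinatorial lemma comparing the Bruhat order on $z\nu$ with the order $\leq$ on $\frh^*$ (which for $\nu$ regular is the Bruhat order on $W$, and which is compatible with the translation as recorded in \cite[\S2]{ES}) then identifies $\{\beta\in\Phi^+: s_\beta z\nu<z\nu\}$, after merging coset-equivalent terms, with $\{\beta\in\Phi^+: s_\beta\mu<\mu\}$ counted with the correct multiplicities, yielding exactly $\sum_{\beta\in\Phi^+,\,s_\beta\mu<\mu}[M(s_\beta\mu)]$. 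I expect the regular-case input and the $t_\nu^\lambda$-transport to be routine given the cited results; the only place requiring genuine care is verifying that the multiplicities of repeated Verma summands on the right match up after the translation, which is where the structure of $W^J$ and Proposition \ref{bnprop1}-type coset combinatorics enter.
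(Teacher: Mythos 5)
Your overall skeleton matches the paper's: settle the regular case by combining the Jantzen sum formula with Beilinson--Bernstein (Jantzen filtration $=$ radical filtration for regular Verma modules), then transport to the singular case via $t_\nu^\lambda$ using Corollary~\ref{gdcor2}. You also omit the preliminary reduction to the integral case via Soergel's equivalence, but that is a minor point.

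The genuine gap is in the bookkeeping you flag as ``the only place requiring genuine care.'' You anticipate that some $\beta\in\Phi^+$ with $s_\beta z\nu < z\nu$ may satisfy $s_\beta z\lambda = z\lambda$, and you write that these ``contribute terms $[M(z\lambda)]$'' which then need to be merged or matched to multiplicities on the other side. This is a false premise, and if you carried it out as described you would arrive at the wrong identity. The paper's key observation --- and the step you are missing --- is that for $w\in W^J$ and $\beta\in\Phi^+$, the condition $s_\beta w\nu < w\nu$ is \emph{equivalent} to $s_\beta w\lambda < w\lambda$, so the two index sets coincide exactly and there is no collapse whatsoever. Concretely: if $\beta\in\Phi^+$ fixes $w\lambda$, then $w^{-1}\beta\in\Phi_\lambda=\Phi_J$, and since $w\in W^J$ sends $\Phi_J^+$ into $\Phi^+$, we must have $\beta = w\alpha$ with $\alpha\in\Phi_J^+$; but then $\langle w\nu,\beta^\vee\rangle=\langle\nu,\alpha^\vee\rangle<0$ because $\nu$ is regular anti-dominant, so $s_\beta w\nu > w\nu$ --- such $\beta$ never enter the sum. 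Thus the transport $t_\nu^\lambda$ sends the regular sum term by term to the singular sum over the \emph{same} set of positive roots, with no coset-merging, no multiplicity matching, and no appeal to Proposition~\ref{bnprop1}. You need this one elementary lemma about $W^J$ to close the proof; the machinery you sketch in its place would not.
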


\begin{proof}
Obviously $s_\beta \mu<\mu$ is equivalent to $\langle\mu, \beta^\vee\rangle\in\bbZ^{>0}$ (see for example \cite{H3}). With Soergel's result (\cite[Theorem 11]{So2}, see \cite[Theorem 11.13]{H3} for a English translation), it suffices to consider the integral case. When $\mu$ is regular, the Jantzen filtration of $M(\mu)$ coincides with its radical filtration \cite{BB}. This is exactly the Jantzen sum formula \cite{J2, B} for regular Verma modules. If $\mu$ is singular, there exists anti-dominant weight $\lambda\in W\mu$ so that $\mu=w\lambda$ for some $w\in W^J$, where $J=\{\alpha\in\Delta\mid\langle\lambda, \alpha^\vee\rangle=0\}$. Let $\nu$ be a regular integral anti-dominant weight (e.g., $\nu=-\rho$). In view of Corollary \ref{gdcor2}, we obtain
\[
\begin{aligned}
\sum_{i>0}[\Rad^{i}M(\mu)]=&\sum_{i>0}t_{\nu}^{\lambda}([\Rad^{i}M(w\nu)])\\
=&\sum_{\beta\in\Phi^+, s_\beta w\nu<w\nu}t_{\nu}^{\lambda}([M(s_\beta w\nu)])\\
=&\sum_{\beta\in\Phi^+, s_\beta w\nu<w\nu}[M(s_\beta w\lambda)]\\
=&\sum_{\beta\in\Phi^+, s_\beta \mu<\mu}[M(s_\beta\mu)],
\end{aligned}
\]
where the second equality follows from the regular case, the fourth equality follows from the fact that $s_\beta w\nu<w\nu$ if and only if $w^{-1}\beta<0$ and if and only if $s_\beta w\lambda<w\lambda$. To see this, it suffices to show that $s_\beta w\lambda\neq w\lambda$ for $\beta\in\Phi^+$ when $s_\beta w\nu<w\nu$. Otherwise $\beta=w\alpha$ for some $\alpha\in\Phi_J^+$ (note that $w\alpha>0$ for $w\in W^J$). This forces $0<\langle w\nu,\beta\rangle=\langle\nu, \alpha\rangle<0$, a contradiction.
\end{proof}

\begin{lemma}\label{sflem1}
Let $i\in\bbZ^{\geq0}$ and $\mu\in\Lambda_I^+$. Then
\[
[\Rad^{i}M_I(\mu)]=\sum_{w\in W_I}(-1)^{\ell(w)}[\Rad^{i-\ell(w)}M(w\mu)].
\]
\end{lemma}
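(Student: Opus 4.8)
The target identity expresses the radical layers of a generalized Verma module as an alternating sum of (shifted) radical layers of ordinary Verma modules indexed by $W_I$. The natural tool is the graded BGG resolution of Lemma \ref{bgg}, which I would first reduce to the integral case exactly as in Proposition \ref{sfprop1}: by Soergel's result \cite[Theorem 11]{So2}, \cite[Theorem 11.13]{H3}, it suffices to treat $\mu$ integral, and then by a translation argument via Corollary \ref{gdcor2} (applied to $\Rad^i$ of a Verma module, part (1)) together with Proposition \ref{gdprop1} (translating $\mathbb{\Delta}^\mu$), it suffices to handle the case where the highest weight is of the form $w_Ix\lambda$ with $\lambda$ anti-dominant integral; write $\mu=w_Ix\lambda$ with $x\in{}^IW^J$.

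\textbf{Main argument in the graded setting.} Work in $A_\lambda$\text{-gmod}. Since $A_\lambda$ is Koszul, the grading filtration on $\mathbb{\Delta}(z w_I w\lambda)$ coincides (up to shift) with the radical filtration, so $\Rad^i\mathbb{\Delta} = \bigoplus_{j\geq i}\mathbb{\Delta}_j$ in the notation used in the proof of Corollary \ref{gdcor2}. The key observation is that the maps $\hat\delta_k$ in \eqref{flteq1} are homogeneous of degree one: thus $\hat\delta_k$ carries $\Rad^i\mathbb{C}_k = \bigoplus_{z\in W_I^k}\Rad^i\mathbb{\Delta}(zw_Iw\lambda)$ into $\Rad^{i+1}\mathbb{C}_{k-1}$, and $\hat\epsilon$ (degree zero) carries $\Rad^i\mathbb{\Delta}(w_Iw\lambda)$ onto $\Rad^i\mathbb{\Delta}^\mu(w_Iw\lambda)$. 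The plan is to show that the resolution \eqref{flteq1}, when filtered by the shifted radical/grading filtration ($\mathbb{C}_k$ placed in degrees shifted by $k$), remains exact on each graded piece. Concretely: the graded pieces of \eqref{flteq1} split off as short exact sequences of semisimple modules $0\to(\mathbb{C}_m)_{j-m}\to\cdots\to(\mathbb{C}_0)_j\to\mathbb{\Delta}^\mu(w_Ix\lambda)_j\to 0$ for each fixed total degree $j$, because a complex of graded modules over a graded ring is exact iff it is exact in each degree. Taking classes in $K_0(A_\lambda)$ degree-by-degree and summing the contributions with total degree $\geq i$, one gets
\[
[\Rad^i\mathbb{\Delta}^\mu(w_Ix\lambda)] = \sum_{k=0}^{m}(-1)^k\sum_{z\in W_I^k}[\Rad^{i-k}\mathbb{\Delta}(zw_Ix\lambda)],
\]
where the shift by $k = \ell(z)$ records that $\mathbb{C}_k$ sits $k$ degrees higher. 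Finally, forget the grading (apply the specialization $v\mapsto 1$, i.e. pass from $K_0(A_\lambda)$ to $K(\caO)$), which turns $\mathbb{\Delta}(zw_Ix\lambda)$ into $M(zw_Ix\lambda)$, $\mathbb{\Delta}^\mu(w_Ix\lambda)$ into $M_I(w_Ix\lambda) = M_I(\mu)$, and $\Rad^{i-k}$ of the graded lift into $\Rad^{i-k}$ of the module (since the grading filtration equals the radical filtration up to shift, the ungraded radical filtration is recovered); this gives exactly $[\Rad^iM_I(\mu)] = \sum_{w\in W_I}(-1)^{\ell(w)}[\Rad^{i-\ell(w)}M(w\mu)]$.

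\textbf{Where the difficulty lies.} The routine part is the $K$-theory bookkeeping and the reduction steps. The genuinely delicate point is the claim that filtering the graded BGG resolution \eqref{flteq1} by the (shifted) radical filtration yields an exact complex in each internal degree with the correct shift bookkeeping --- equivalently, that applying the subfunctor $\Rad^i(-)$ termwise to \eqref{flteq1} (with the shift $k\mapsto i-k$ on $\mathbb{C}_k$) stays exact. For an arbitrary exact complex of filtered modules the filtration need not behave so well, but here it works because all modules involved carry gradings, the connecting maps are homogeneous of degree one, and the grading filtration \emph{is} the radical filtration up to shift (Koszulity); so exactness in each internal degree is automatic and the degree-one shift of each $\hat\delta_k$ forces precisely the index shift by $\ell(w)$. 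I would present this carefully, perhaps by first recording the graded statement $[\Rad^i\mathbb{\Delta}^\mu(w_Ix\lambda)] = \sum_{w\in W_I}(-1)^{\ell(w)}[\Rad^{i-\ell(w)}\mathbb{\Delta}(ww_Ix\lambda)]$ in $K_0(A_\lambda)$ and then specializing, isolating the use of Koszulity. The translation-to-the-integral-$\lambda$-anti-dominant case also needs a line checking that $\Rad^i$ intertwines with the translation functor on $\mathbb{\Delta}^\mu$'s, which follows from Proposition \ref{gdprop1} by the same argument as in Corollary \ref{gdcor2}(1).
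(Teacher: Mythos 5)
Your proof is correct and follows essentially the same route as the paper: reduce to the integral case by Soergel's equivalence, then exploit Koszulity (grading filtration $=$ radical filtration up to shift) to extract the coefficient of $v^j$, $j\geq i$, from the graded Euler-characteristic identity of Corollary \ref{fltprop1}, which in turn comes from the graded BGG resolution \eqref{flteq1} whose differentials are homogeneous of degree one. Note that your translation step is superfluous: once $\mu$ is integral, $\mu\in\Lambda_I^+$ is automatically of the form $w_Ix\lambda$ with $\lambda\in W\mu$ anti-dominant and $x\in{}^IW^J$, so the Koszul setup applies directly; the paper's proof simply cites Corollary \ref{fltprop1} after the Soergel reduction.
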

\begin{proof}
We only need to consider the integral case in view of Soergel's category equivalence (\cite[Theorem 11]{So2}). Then the lemma is an immediate consequence of Proposition \ref{fltprop1}.
\end{proof}

Setting $i=0$ in Lemma \ref{sflem1}, we obtain the following result (which also follows from Corollary \ref{fltprop1}).
\begin{equation}\label{sfeq1}
[M_I(\mu)]=\sum_{w\in W_I}(-1)^{\ell(w)}[M(w\mu)].
\end{equation}
This is Proposition 9.6 in \cite{H3}.

\begin{definition} For any $\mu\in\frh^*$, we define $$
\theta(\mu):= \sum_{w\in W_I}(-1)^{\ell(w)}[M(w\mu)] .
$$
\end{definition}
These are the character formulae defined in \cite{J1}, which can be used to determine the simplicity of generalized Verma modules.

\begin{prop}[\cite{J2, Mat1, Ku}]\label{jsprop1}
Let $\mu\in\frh^*$.
\begin{itemize}
\item [(1)] $\theta(w\mu)=(-1)^{\ell(w)}\theta(\mu)$ for $w\in W_I$.
\item [(2)] If $\langle \mu,\alpha^\vee\rangle=0$ for some $\alpha\in\Phi_I$, then $\theta(\mu)=0$.
\item [(3)] If $\langle \mu,\alpha^\vee\rangle\in\bbZ\backslash\{0\}$ for all $\alpha\in\Phi_I$, there exists $w\in W_I$ so that $w\mu\in\Lambda_I^+$ and $\theta(\mu)=(-1)^{\ell(w)}[M_I(w\mu)]$.
\end{itemize}
\end{prop}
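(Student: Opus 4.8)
The plan is to prove the three parts of Proposition~\ref{jsprop1} by reducing everything to the character identity \eqref{sfeq1} together with elementary properties of $W_I$ and the $W$-orbit structure on $\frh^*$. The key observation is that $\theta(\mu)$ is by definition an alternating sum over the finite group $W_I$, so all three statements are ultimately combinatorial manipulations of this sum.

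For part~(1), I would compute $\theta(w\mu)$ for $w\in W_I$ directly from the definition: $\theta(w\mu)=\sum_{v\in W_I}(-1)^{\ell(v)}[M(vw\mu)]$. Re-indexing the sum via $v\mapsto vw^{-1}$ (an automorphism of $W_I$) and using that $(-1)^{\ell(vw^{-1})}=(-1)^{\ell(v)}(-1)^{\ell(w)}$ — which holds because $(-1)^{\ell(-)}=\det$ is a group homomorphism $W_I\to\{\pm1\}$ — yields $\theta(w\mu)=(-1)^{\ell(w)}\theta(\mu)$. For part~(2), suppose $\langle\mu,\alpha^\vee\rangle=0$ for some $\alpha\in\Phi_I$; then the reflection $s_\alpha$ lies in $W_I$, fixes $\mu$, and has $\ell(s_\alpha)$ odd (indeed $(-1)^{\ell(s_\alpha)}=\det(s_\alpha)=-1$). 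Applying part~(1) with $w=s_\alpha$ gives $\theta(\mu)=\theta(s_\alpha\mu)=(-1)^{\ell(s_\alpha)}\theta(\mu)=-\theta(\mu)$, hence $\theta(\mu)=0$.

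For part~(3), assume $\langle\mu,\alpha^\vee\rangle\in\bbZ\setminus\{0\}$ for every $\alpha\in\Phi_I$. This says exactly that $\mu$ is regular with respect to the subsystem $\Phi_I$ and integral for it, so the $W_I$-orbit of $\mu$ meets the interior of the dominant $W_I$-chamber in a single point; concretely, there is a unique $w\in W_I$ with $\langle w\mu,\alpha^\vee\rangle\in\bbZ^{>0}$ for all $\alpha\in I$, i.e.\ $w\mu\in\Lambda_I^+$. Part~(1) then gives $\theta(\mu)=(-1)^{\ell(w^{-1})}\theta(w^{-1}\cdot w\mu)$ — more cleanly, writing $\mu'=w\mu$ with $w\mu=\mu'\in\Lambda_I^+$ and $\mu=w^{-1}\mu'$, part~(1) yields $\theta(\mu)=\theta(w^{-1}\mu')=(-1)^{\ell(w^{-1})}\theta(\mu')=(-1)^{\ell(w)}\theta(\mu')$ since $\ell(w^{-1})=\ell(w)$. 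Finally $\theta(\mu')=[M_I(\mu')]$ by \eqref{sfeq1}, because $\mu'\in\Lambda_I^+$, which is precisely the hypothesis under which \eqref{sfeq1} was stated. Renaming $w$ as needed to match the statement gives $\theta(\mu)=(-1)^{\ell(w)}[M_I(w\mu)]$ with $w\mu\in\Lambda_I^+$.

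The only point requiring any care — the ``main obstacle,'' though it is mild — is part~(3): one must justify that the regularity-plus-integrality condition on $\mu$ relative to $\Phi_I$ guarantees a (unique) $W_I$-translate landing in $\Lambda_I^+$. This is the standard fact that a regular integral weight for a root system has a unique dominant conjugate under the corresponding Weyl group, applied to $\Phi_I$ and $W_I$; I would simply cite the standard chamber-geometry argument (or the references \cite{J2, Mat1, Ku} already attached to the proposition). Everything else is bookkeeping with the sign character $(-1)^{\ell(-)}$ and re-indexing finite sums.
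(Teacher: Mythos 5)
Your proof is correct. Note that the paper provides no proof of this proposition at all --- it is stated as a citation to the references \cite{J2, Mat1, Ku} --- so there is no ``paper's own proof'' to compare against. Your argument is exactly the standard one: part~(1) follows from re-indexing the alternating sum over the finite group $W_I$ and using that $(-1)^{\ell(\cdot)}$ is the sign character restricted from $W$ to $W_I$ (which is valid since length is compatible with the parabolic subgroup $W_I$); part~(2) follows by applying part~(1) to the reflection $s_\alpha\in W_I$ fixing $\mu$; and part~(3) combines part~(1) with the fact that a $\Phi_I$-regular, $\Phi_I$-integral weight has a unique $W_I$-conjugate in $\Lambda_I^+$, together with the Euler-characteristic identity \eqref{sfeq1} (Proposition~9.6 in \cite{H3}, independent of the material in this paper, so there is no circularity). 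This is precisely the content of the cited sources, and your write-up is complete and correct.
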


\begin{lemma}\label{sflem2}
Let $\mu\in\Lambda_I^+$. Then
\begin{equation*}
\sum_{w\in W_I}(-1)^{\ell(w)}\sum_{\beta\in\Phi_I^+, s_\beta w\mu<w\mu}[M(s_\beta w\mu)]
=\sum_{w\in W_I}(-1)^{\ell(w)+1}\ell(w)[M(w\mu)].
\end{equation*}
\end{lemma}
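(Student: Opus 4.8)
The plan is to reindex the double sum on the left by pairing each root $\beta\in\Phi_I^+$ with the reflection $s_\beta$, and to track how the inequality $s_\beta w\mu<w\mu$ behaves as $w$ ranges over $W_I$. First I would rewrite the left-hand side as a sum over pairs $(w,\beta)$ with $w\in W_I$, $\beta\in\Phi_I^+$, and $s_\beta w\mu<w\mu$, of the term $(-1)^{\ell(w)}[M(s_\beta w\mu)]$. The key observation is that for $\mu\in\Lambda_I^+$ and $w\in W_I$, the weight $w\mu$ is $W_I$-regular (since $\langle\mu,\alpha^\vee\rangle>0$ for all $\alpha\in I$ forces $\langle\mu,\gamma^\vee\rangle\neq 0$ for all $\gamma\in\Phi_I$), so $s_\beta w\mu\neq w\mu$ always, and $s_\beta w\mu=w's_{w^{-1}\beta}\cdot$ wait — more precisely $s_\beta w = w s_{w^{-1}\beta}$, so writing $w'=s_\beta w=ws_\gamma$ with $\gamma=w^{-1}\beta\in\Phi_I$, the element $w'$ also lies in $W_I$.

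Next I would organize the sum by the ordered pair $\{w,w'\}$ of elements of $W_I$ differing by a reflection, i.e.\ with $w'=s_\beta w$ for a (unique up to sign) positive root $\beta\in\Phi_I^+$. For such a pair exactly one of $s_\beta w\mu<w\mu$ or $s_\beta w'\mu<w'\mu$ holds (again using $W_I$-regularity of $w\mu$ to rule out equality), namely the one with the larger length, so the pair $(w,\beta)$ contributes $(-1)^{\ell(w)}[M(w'\mu)]$ exactly when $\ell(w')<\ell(w)$, equivalently when $w'\to w$ is a Bruhat cover of length difference $\geq 1$ in the direction of increasing length. Thus the left-hand side equals $\sum_{w'\in W_I}\bigl(\sum_{\beta\in\Phi_I^+:\ \ell(s_\beta w')>\ell(w')}(-1)^{\ell(s_\beta w')}\bigr)[M(w'\mu)]$. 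The inner coefficient should be evaluated: $(-1)^{\ell(s_\beta w')}=(-1)^{\ell(w')+1}$ whenever $\ell(s_\beta w')>\ell(w')$, since in the Coxeter group $W_I$ any reflection changes length by an odd number, hence changes parity; so the coefficient is $(-1)^{\ell(w')+1}\cdot\#\{\beta\in\Phi_I^+:\ \ell(s_\beta w')>\ell(w')\}$.

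The remaining point is the combinatorial identity $\#\{\beta\in\Phi_I^+:\ \ell(s_\beta w')>\ell(w')\}=\ell(w')$. This is the standard fact that for any $w'$ in a Weyl group $W_I$, the number of positive roots $\beta$ with $\ell(s_\beta w')<\ell(w')$ equals $\ell(w')$ — indeed these are exactly the $\beta$ with $w'^{-1}\beta<0$, and $|\{\beta\in\Phi_I^+:\ w'^{-1}\beta<0\}|=\ell(w')$ — and then by replacing $w'$ with $w_I w'$ or by the symmetry $\ell(s_\beta w')>\ell(w')\iff \ell(s_\beta (w_I w'))<\ell(w_I w')$ one gets the count $\ell(w')$ for the ``$>$'' side as well. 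Actually it is cleaner to note directly: each reflection $s_\beta$ with $\beta\in\Phi_I^+$ satisfies exactly one of $\ell(s_\beta w')>\ell(w')$ or $\ell(s_\beta w')<\ell(w')$, there are $|\Phi_I^+|=\ell(w_I)$ reflections total, $\ell(w')$ of them decrease the length, so $\ell(w_I)-\ell(w')$ increase it — and then one performs the substitution $w'\mapsto w_I w'$ in the sum (a bijection on $W_I$) to convert $\ell(w_I)-\ell(w')$ into $\ell(w')$ and check the sign $(-1)^{\ell(w_Iw')+1}=(-1)^{\ell(w_I)+\ell(w')+1}$ matches up; whether this substitution is needed depends on which count is wanted, so I would settle this bookkeeping carefully. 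Plugging in $\#\{\cdots\}=\ell(w')$ gives $\sum_{w'\in W_I}(-1)^{\ell(w')+1}\ell(w')[M(w'\mu)]$, which is the right-hand side. The main obstacle is the careful handling of the reindexing in the middle paragraph — ensuring that each term $[M(w'\mu)]$ on the right is produced exactly once and with the correct sign, and correctly identifying which of the two members of each Bruhat-related pair contributes; once that is pinned down, the length-counting identity is classical.
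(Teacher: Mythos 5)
Your plan — reindex by $w'=s_\beta w$ and count the relevant roots $\beta$ — is the same as the paper's, but there is a sign error at the heart of it that propagates and the proposed fix does not repair it.

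The error is in identifying which element of the pair $\{w,w'\}$ satisfies the condition $s_\beta w\mu<w\mu$. Since $\mu\in\Lambda_I^+$ is $\Phi_I$-\emph{dominant} regular (not anti-dominant), we have $s_\beta w\mu<w\mu$ for $\beta\in\Phi_I^+$ if and only if $\langle w\mu,\beta^\vee\rangle>0$, i.e.\ $w^{-1}\beta\in\Phi_I^+$, and by the standard criterion this is equivalent to $\ell(s_\beta w)>\ell(w)$. So the condition holds precisely when $w$ is the \emph{shorter} member of the pair, i.e.\ when $\ell(w')>\ell(w)$ — the opposite of what you claim (``namely the one with the larger length\dots exactly when $\ell(w')<\ell(w)$''). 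A sanity check: for $w=e$, all $\beta\in\Phi_I^+$ satisfy $s_\beta\mu<\mu$; for $w=w_I$, none do. So the shortest element contributes the most.

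Consequently, after reindexing by $w'$, the inner sum should be over $\beta$ with $\ell(s_\beta w')<\ell(w')$, not $>$, and then the count is immediately the classical $\#\{\beta\in\Phi_I^+ : \ell(s_\beta w')<\ell(w')\}=\ell(w')$, with sign $(-1)^{\ell(s_\beta w')}=(-1)^{\ell(w')+1}$. This is exactly what the paper does: it sets $w'=s_\beta w$, shows $(w')^{-1}\beta<0$, and uses $\#\{\beta\in\Phi_I^+ : (w')^{-1}\beta<0\}=\ell(w')$. With your flipped inequality you correctly realize the count is $\ell(w_I)-\ell(w')$ rather than $\ell(w')$, but the substitution $w'\mapsto w_Iw'$ cannot rescue this: it merely relabels the terms of the (incorrect) sum $\sum_{w'}(-1)^{\ell(w')+1}(\ell(w_I)-\ell(w'))[M(w'\mu)]$ without changing its value, and also replaces $[M(w'\mu)]$ by $[M(w_Iw'\mu)]$ and introduces an extra sign $(-1)^{\ell(w_I)}$. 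That sum is genuinely different from the desired right-hand side $\sum_{w}(-1)^{\ell(w)+1}\ell(w)[M(w\mu)]$; e.g.\ for $W_I=\{e,s\}$ the former is $-[M(\mu)]$ while the latter is $[M(s\mu)]$. Once you flip the inequality the argument goes through exactly as you outlined it, and matches the paper's proof.
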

\begin{proof}
For any $w\in W_I$ and $\beta\in\Phi_I^+$ with $s_\beta w\mu<w\mu$, denote $w'=s_\beta w$. Then $(w')^{-1}\beta\in\Phi_I$ and
\[
\langle\mu, ((w')^{-1}\beta)^\vee\rangle=\langle w'\mu, \beta^\vee\rangle=\langle s_\beta w\mu, \beta^\vee\rangle=-\langle w\mu, \beta^\vee\rangle\in\bbZ^{<0}.
\]
One has $(w')^{-1}\beta<0$ since $\mu\in\Lambda_I^+$. The number of $\beta\in\Phi_I^+$ with such a property is exactly $\ell(w')$ (see for example \cite{H2}). Since $l(w)\equiv l(w')+1\pmod{2}$, we obtain
\[
\sum_{w\in W_I}(-1)^{\ell(w)}\sum_{\beta\in\Phi_I^+, s_\beta w\mu<w\mu}[M(s_\beta w\mu)]
=\sum_{w'\in W_I}(-1)^{l(w')+1}\ell(w')[M(w'\mu)].
\]
\end{proof}

\begin{definition} We define $$\Psi_\mu^+=\{\beta\in\Phi^+\backslash\Phi_I^+\mid \langle\mu, \beta^\vee\rangle\in\bbZ^{>0}\}. $$
\end{definition}

The following theorem gives a sum formula for the radical filtration of the generalized Verma module, which can be viewed as a generalization of Proposition \ref{sfprop1}.

\begin{theorem}\label{sfthm1}
Let $\mu\in\Lambda_I^+$. Then
\begin{equation}\label{sft1eq1}
\sum_{i\geq 1}[\Rad^{i}M_I(\mu)]=\sum_{\beta\in \Psi_\mu^+}\theta(s_\beta\mu).
\end{equation}

\end{theorem}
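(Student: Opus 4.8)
The plan is to reduce the statement to the regular Verma module sum formula (Proposition~\ref{sfprop1}) via Lemma~\ref{sflem1}, and then reorganize the resulting double sum over $W_I \times \Phi^+$ by splitting $\Phi^+ = \Phi_I^+ \sqcup (\Phi^+ \setminus \Phi_I^+)$. First I would apply Lemma~\ref{sflem1} to write $\sum_{i \geq 1}[\Rad^i M_I(\mu)] = \sum_{i \geq 1}\sum_{w \in W_I}(-1)^{\ell(w)}[\Rad^{i-\ell(w)}M(w\mu)]$. For each fixed $w$, summing over $i$ and using Proposition~\ref{sfprop1} (noting that $[\Rad^j M(w\mu)] = [M(w\mu)]$ for $j \leq 0$, so $\sum_{i \geq 1}[\Rad^{i-\ell(w)}M(w\mu)] = \ell(w)[M(w\mu)] + \sum_{j \geq 1}[\Rad^j M(w\mu)] = \ell(w)[M(w\mu)] + \sum_{\beta \in \Phi^+,\, s_\beta w\mu < w\mu}[M(s_\beta w\mu)]$) gives
\[
\sum_{i \geq 1}[\Rad^i M_I(\mu)] = \sum_{w \in W_I}(-1)^{\ell(w)}\ell(w)[M(w\mu)] + \sum_{w \in W_I}(-1)^{\ell(w)}\sum_{\beta \in \Phi^+,\, s_\beta w\mu < w\mu}[M(s_\beta w\mu)].
\]

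Next I would split the inner sum over $\beta \in \Phi^+$ into the part with $\beta \in \Phi_I^+$ and the part with $\beta \in \Phi^+ \setminus \Phi_I^+$. By Lemma~\ref{sflem2}, the $\Phi_I^+$-part equals $\sum_{w \in W_I}(-1)^{\ell(w)+1}\ell(w)[M(w\mu)]$, which exactly cancels the first term $\sum_{w \in W_I}(-1)^{\ell(w)}\ell(w)[M(w\mu)]$. So what remains is
\[
\sum_{i \geq 1}[\Rad^i M_I(\mu)] = \sum_{w \in W_I}(-1)^{\ell(w)}\sum_{\beta \in \Phi^+ \setminus \Phi_I^+,\, s_\beta w\mu < w\mu}[M(s_\beta w\mu)].
\]
It remains to identify the right-hand side with $\sum_{\beta \in \Psi_\mu^+}\theta(s_\beta\mu)$.

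For the final identification, I would fix $\beta_0 \in \Psi_\mu^+$ (i.e.\ $\beta_0 \in \Phi^+ \setminus \Phi_I^+$ with $\langle \mu, \beta_0^\vee \rangle \in \bbZ^{>0}$) and show that $\theta(s_{\beta_0}\mu) = \sum_{w \in W_I}(-1)^{\ell(w)}[M(w s_{\beta_0}\mu)]$ (by definition of $\theta$), and then match terms: for each $w \in W_I$, the condition $s_\beta w\mu < w\mu$ with $\beta \in \Phi^+ \setminus \Phi_I^+$ corresponds to $\langle w\mu, \beta^\vee\rangle > 0$, equivalently $\langle \mu, (w^{-1}\beta)^\vee\rangle > 0$; one needs to check $w^{-1}\beta \in \Phi^+ \setminus \Phi_I^+$ (since $\beta \notin \Phi_I^+$ and $w \in W_I$ preserves $\Phi^+ \setminus \Phi_I^+$, and since $\langle\mu,\cdot\rangle > 0$ forces positivity as $\mu\in\Lambda_I^+$ is regular on the $\Phi_I$-part but this needs care for the non-$\Phi_I$ part), so $w^{-1}\beta \in \Psi_\mu^+$. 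Setting $\gamma = w^{-1}\beta$, we have $s_\beta w = w s_\gamma$, so $M(s_\beta w\mu) = M(w s_\gamma \mu)$, and the map $(w, \beta) \mapsto (w, \gamma)$ is a bijection between the indexing set on the right-hand side above and $\{(w, \gamma) \in W_I \times \Psi_\mu^+\}$. Hence the double sum becomes $\sum_{\gamma \in \Psi_\mu^+}\sum_{w \in W_I}(-1)^{\ell(w)}[M(w s_\gamma\mu)] = \sum_{\gamma \in \Psi_\mu^+}\theta(s_\gamma\mu)$, as desired.

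The main obstacle I anticipate is the bookkeeping in the last step: verifying that $w \in W_I$ acting on $\beta \in \Phi^+ \setminus \Phi_I^+$ with the sign/positivity conditions sets up a clean bijection, and in particular that the condition $\langle\mu,(w^{-1}\beta)^\vee\rangle > 0$ together with $w^{-1}\beta \notin \Phi_I$ automatically forces $w^{-1}\beta \in \Phi^+$ (using $\mu \in \Lambda_I^+$ and the fact that any element of $\Phi^- \setminus \Phi_I^-$ pairs non-positively — actually negatively or zero — against a weight in $\Lambda_I^+$ only if... this requires the hypothesis $\mu \in \Lambda_I^+$ to control signs, and one must be slightly careful since $\mu$ need not be dominant for all of $\Phi^+$). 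A secondary point is handling the Grothendieck-group identities cleanly when $\ell(w)$ exceeds $i$; writing everything as $\sum_{i \geq 1}$ of virtual modules and using $[\Rad^j M] = [M]$ for $j \leq 0$ makes this transparent, but it should be stated explicitly. Everything else is a direct consequence of Lemmas~\ref{sflem1}, \ref{sflem2} and Proposition~\ref{sfprop1}.
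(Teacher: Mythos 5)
Your proof is correct and follows the paper's argument essentially verbatim: expand via Lemma~\ref{sflem1}, regroup by shift to extract the $\ell(w)[M(w\mu)]$ terms, apply Proposition~\ref{sfprop1}, cancel the $\Phi_I^+$-contribution against the $\ell(w)$-terms using Lemma~\ref{sflem2}, and reindex the remaining double sum by $\gamma=w^{-1}\beta$. The positivity worry you flag at the end is not a genuine obstacle, since $W_I$ permutes $\Phi^+\setminus\Phi_I^+$ (each $s_\alpha$ with $\alpha\in I$ permutes $\Phi^+\setminus\{\alpha\}$ and stabilizes $\Phi_I$), so $\beta\in\Psi_{w\mu}^+$ if and only if $w^{-1}\beta\in\Psi_\mu^+$ with no further appeal to $\mu\in\Lambda_I^+$.
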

\begin{proof}
Note that $\langle\mu, \beta^\vee\rangle=\langle w\mu, (w\beta)^\vee\rangle$. Thus for any $w\in W_I$, $\beta\in\Psi_\mu^+$ if and only if $w\beta\in\Psi_{w\mu}^+$. In view of Lemma \ref{sflem1}, one has
\[
\begin{aligned}
&\sum_{i>0}[\Rad^iM_I(\mu)]\\
=&\sum_{i>0}\sum_{w\in W_I}(-1)^{\ell(w)}[\Rad^{i-\ell(w)}M(w\mu)]\\
=&\sum_{w\in W_I}(-1)^{\ell(w)}\sum_{i>\ell(w)}[\Rad^{i-\ell(w)}M(w\mu)]+\sum_{w\in W_I}(-1)^{\ell(w)}\ell(w)[M(w\mu)]\\
=&\sum_{w\in W_I}(-1)^{\ell(w)}\sum_{\beta\in\Phi^+, s_\beta w\mu<w\mu}[M(s_\beta w\mu)]+\sum_{w\in W_I}(-1)^{\ell(w)}\ell(w)[M(w\mu)]\\
=&\sum_{w\in W_I}(-1)^{\ell(w)}\sum_{\beta\in\Psi_{w\mu}^+}[M(s_\beta w\mu)]\\
=&\sum_{w\in W_I}(-1)^{\ell(w)}\sum_{w^{-1}\beta\in\Psi_{\mu}^+}[ M(ws_{w^{-1}\beta}\mu)]\\
=&\sum_{\gamma\in \Psi_\mu^+}\theta(s_\gamma\mu),
\end{aligned}
\]
where the third equality follows from Proposition \ref{sfprop1} and the fourth equality follows from Lemma \ref{sflem2}.
\end{proof}

Note that $M_I(\mu)$ is simple if and only if $\Rad^{i}M_I(\mu)=0$ for $i\geq1$. With Theorem \ref{sfthm1}, we can recover the famous Jantzen's simplicity criteria for generalized Verma modules obtained in \cite{J1}.

\begin{theorem}[{\cite[Collar 1]{J1}}]
Let $\mu\in\Lambda_I^+$. Then
$M_I(\mu)$ is simple if and only if
\[
\sum_{\beta\in\Psi_\mu^+}\theta(s_\beta\mu)=0.
\]
\end{theorem}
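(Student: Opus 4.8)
The plan is to read off this criterion as an immediate consequence of the sum formula, Theorem~\ref{sfthm1}, combined with the elementary observation recorded just above the statement: $M_I(\mu)$ is simple if and only if $\Rad^i M_I(\mu)=0$ for all $i\geq 1$. In other words, both conditions in the theorem are equivalent to the vanishing of $\sum_{i\geq 1}[\Rad^i M_I(\mu)]$ in the Grothendieck group $K(\caO^\frp)$, and Theorem~\ref{sfthm1} identifies that sum with $\sum_{\beta\in\Psi_\mu^+}\theta(s_\beta\mu)$.

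For the forward direction I would argue as follows. If $M_I(\mu)$ is simple, then $\Rad M_I(\mu)=\Rad^1 M_I(\mu)=0$, hence $\Rad^i M_I(\mu)=0$ for every $i\geq 1$, so $\sum_{i\geq 1}[\Rad^i M_I(\mu)]=0$ in $K(\caO^\frp)$; Theorem~\ref{sfthm1} then yields $\sum_{\beta\in\Psi_\mu^+}\theta(s_\beta\mu)=0$.

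For the converse I would start from $\sum_{\beta\in\Psi_\mu^+}\theta(s_\beta\mu)=0$, apply Theorem~\ref{sfthm1} to obtain $\sum_{i\geq 1}[\Rad^i M_I(\mu)]=0$, and then invoke positivity: each $[\Rad^i M_I(\mu)]$ is a non-negative integral combination of the classes $[L(\nu)]$, and since these classes form a $\bbZ$-basis of $K(\caO^\frp)$, a vanishing sum of such effective classes forces each summand to vanish. In particular $[\Rad^1 M_I(\mu)]=0$, hence $\Rad^1 M_I(\mu)=0$, and the cited equivalence then gives that $M_I(\mu)$ is simple. (Equivalently, $\Rad^1 M_I(\mu)=0$ says $M_I(\mu)$ is semisimple, and having simple head $L(\mu)$ it must coincide with $L(\mu)$.)

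There is no serious obstacle here, since all the content lies in Theorem~\ref{sfthm1}; the only point worth spelling out is the positivity step in the converse, where one uses that the classes of radical layers are effective and that the simple classes $[L(\nu)]$ are linearly independent, so that a single vanishing identity in $K(\caO^\frp)$ collapses all the layers above the head at once.
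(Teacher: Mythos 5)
Your proposal is correct and follows exactly the same route the paper intends: the paper simply remarks that $M_I(\mu)$ is simple if and only if $\Rad^i M_I(\mu)=0$ for all $i\geq 1$ and then appeals to Theorem~\ref{sfthm1}. The positivity argument you spell out for the converse (effectivity of the radical-layer classes plus linear independence of the $[L(\nu)]$) is the implicit content of that remark, and your observation that $\Rad^1 M_I(\mu)=0$ together with the simple head forces $M_I(\mu)=L(\mu)$ closes the argument cleanly.
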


\begin{remark}\label{lormk2}
If $\mu$ is regular, (\ref{sft1eq1}) can be found in \cite[Corollary 7.1.4]{I3}. The Jantzen filtrations for Verma modules in the usual BGG category $\caO$ are well-known in the literature. As pointed out in Remark 9.17 of \cite{H3}, Jantzen introduced a similar ``Jantzen filtration'' for generalized Verma modules in \cite{J2}. In fact, with Lemma 3, Satz 2 and the observation in the Bemerkung before Lemma 4 in \cite{J2}, along Jantzen's line for Verma modules (see \cite{J1} or \cite{H3}), one can obtain the following generalization: Let $\mu\in\Lambda_I^+$. Then $M_I(\mu)$ has a filtration by submodules
\[
M_I(\mu)=M_I(\mu)^0\supset M_I(\mu)^1\supset M_I(\mu)^2\supset\ldots
\]
with $M_I(\mu)^i=0$ for large $i$, such that
\begin{itemize}
\item [(1)] Every nonzero quotient $M_I(\mu)^i/M_I(\mu)^{i+1}$ has a nondegenerate contravariant form.

\item [(2)] $M_I(\mu)^1$ is the unique maximal submodule of $M_I(\mu)$.

\item [(3)] There is a formula:
\[
\sum_{i>0}[M_I(\mu)^i]=\sum_{\beta\in\Psi_\mu^+}\theta(s_\beta\mu).
\]
\end{itemize}
With (\ref{sft1eq1}), one might expect $M_I(\mu)^i=\Rad^iM_I(\mu)$, i.e., the Jantzen filtration of $M_I(\mu)$ coincides with its radical filtration (though $M_I(\mu)$ is in general not rigid anymore). Our Theorem \ref{sfthm1} gives a strong evidence in support of this speculation. This seems to be true when $\mu$ is regular (\cite{BB, Sh}). The singular case is not known.
\end{remark}
%
%%%%%%%%%%%%%%%%%%%%%%%%%%%%%%%%%%%%%%%%%%%%%%%%%%%%%%%%%%%%%%%%%%%%
%
\section{Radical filtration for generalized Verma modules}
%
%%%%%%%%%%%%%%%%%%%%%%%%%%%%%%%%%%%%%%%%%%%%%%%%%%%%%%%%%%%%%%%%%%%%
%

In this section, we will use the sum formula (\ref{sft1eq1}) and other results to determine the radical filtration of basic generalized Verma modules defined in \cite{XZ}.

\subsection{Jantzen coefficients and Gelfand-Kirillov dimension} First recall the Jantzen coefficients defined in \cite{XZ}.

\begin{definition} Let $\mu\in\Lambda_I^+$. We can write (see Proposition \ref{jsprop1})
\begin{equation}\label{jceq1}
\sum_{\beta\in\Psi_\mu^+}\theta(s_\beta\mu)=\sum_{\mu>\nu\in\Lambda_I^+}c(\mu, \nu)[M_I(\nu)],
\end{equation}
where for each $\nu\in\Lambda_I^+$, $c(\mu, \nu)\in\mathbb{Z}$ is called the \emph{Jantzen coefficients} associated to $(\mu,\nu)$.
\end{definition}

The Jantzen coefficients can be calculated through a reduction process (\cite[\S4]{XZ}). In particular, given $\mu\in\Lambda_I^+$, $c(\mu, \nu)$ is nonzero for only finitely many $\nu\in\Lambda_I^+$. The following result follows directly from Proposition \ref{jsprop1}.

\begin{lemma} Let $\mu, \nu\in\Lambda_I^+$ with $\mu>\nu$. Set
\begin{equation}\label{jfeq3}
\Psi_{\mu, \nu}^+:=\{\beta\in\Psi_\mu^+\mid \nu=w_\beta s_\beta\mu\ \mbox{for some}\ w_\beta\in W_I\}.
\end{equation}
Then $c(\mu, \nu)=\sum_{\beta\in \Psi_{\mu, \nu}^+}(-1)^{\ell(w_\beta)}$.
\end{lemma}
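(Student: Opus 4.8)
The plan is to expand the left-hand side of~\eqref{jceq1} directly in terms of the character formulae $\theta(s_\beta\mu)$ and to track which generalized Verma module $[M_I(\nu)]$ each $\theta(s_\beta\mu)$ contributes to. First I would fix $\mu\in\Lambda_I^+$ and $\nu\in\Lambda_I^+$ with $\mu>\nu$, and examine the coefficient of $[M_I(\nu)]$ on both sides of~\eqref{jceq1}. On the right it is by definition $c(\mu,\nu)$. On the left, I would go through the sum $\sum_{\beta\in\Psi_\mu^+}\theta(s_\beta\mu)$ term by term: for each $\beta\in\Psi_\mu^+$, I need to understand when $\theta(s_\beta\mu)$ contributes a nonzero multiple of $[M_I(\nu)]$.

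The key input is Proposition~\ref{jsprop1}. For a fixed $\beta\in\Psi_\mu^+$ write $\mu_\beta:=s_\beta\mu$. There are two cases. If $\langle\mu_\beta,\alpha^\vee\rangle=0$ for some $\alpha\in\Phi_I$, then Proposition~\ref{jsprop1}(2) gives $\theta(\mu_\beta)=0$, so this $\beta$ contributes nothing to any $[M_I(\nu)]$; in particular such $\beta$ are automatically excluded from $\Psi_{\mu,\nu}^+$ for every $\nu$ (there is no $w_\beta\in W_I$ with $w_\beta\mu_\beta\in\Lambda_I^+$, since $\mu_\beta$ lies on a wall of $\Phi_I$, hence on a wall of $W_I$). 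Otherwise $\langle\mu_\beta,\alpha^\vee\rangle\in\bbZ\setminus\{0\}$ for all $\alpha\in\Phi_I$, and Proposition~\ref{jsprop1}(3) gives a $w_\beta\in W_I$ (unique up to $W_I$-stabilizer considerations, but here the stabilizer is trivial since $\mu_\beta$ is $\Phi_I$-regular, so $w_\beta$ is genuinely unique) with $w_\beta\mu_\beta\in\Lambda_I^+$ and $\theta(\mu_\beta)=(-1)^{\ell(w_\beta)}[M_I(w_\beta\mu_\beta)]$. Thus $\theta(s_\beta\mu)$ contributes $(-1)^{\ell(w_\beta)}$ to the coefficient of $[M_I(\nu)]$ precisely when $\nu=w_\beta s_\beta\mu$, i.e.\ precisely when $\beta\in\Psi_{\mu,\nu}^+$ with that particular $w_\beta$. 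Summing over all $\beta$ then yields $c(\mu,\nu)=\sum_{\beta\in\Psi_{\mu,\nu}^+}(-1)^{\ell(w_\beta)}$, using that distinct $\nu$ give linearly independent $[M_I(\nu)]$ in $K(\caO^\frp)$ so that comparing coefficients is legitimate.

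The only genuinely delicate point is the well-definedness of the sign $(-1)^{\ell(w_\beta)}$ attached to each $\beta\in\Psi_{\mu,\nu}^+$: a priori $w_\beta$ in~\eqref{jfeq3} is only required to satisfy $\nu=w_\beta s_\beta\mu$, whereas Proposition~\ref{jsprop1}(3) produces a specific element. I would resolve this by noting that since $\beta\in\Psi_{\mu,\nu}^+\subseteq\Psi_\mu^+$ contributes to some $[M_I(\nu)]$ with $\nu\in\Lambda_I^+$, necessarily $s_\beta\mu$ is $\Phi_I$-regular (else $\theta(s_\beta\mu)=0$ and no such $\nu$ exists), and for a $\Phi_I$-regular weight the element of $W_I$ carrying it into $\Lambda_I^+$ is unique; hence the $w_\beta$ in~\eqref{jfeq3} and the one from Proposition~\ref{jsprop1}(3) coincide, and the sign is unambiguous. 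Everything else is bookkeeping, so this uniqueness observation is the heart of the argument.
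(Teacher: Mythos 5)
Your proposal is correct and matches the paper's (unstated) reasoning: the paper simply remarks that the lemma ``follows directly from Proposition \ref{jsprop1},'' and your argument spells out exactly that derivation, expanding $\sum_{\beta\in\Psi_\mu^+}\theta(s_\beta\mu)$ via cases (2) and (3) of that proposition and comparing coefficients of $[M_I(\nu)]$. Your observation about well-definedness of $w_\beta$ (via $\Phi_I$-regularity of any weight in $\Lambda_I^+$, which forces $s_\beta\mu$ to be $\Phi_I$-regular and $w_\beta$ unique) is the right justification for why $\Psi_{\mu,\nu}^+$ is well-defined and why each $\beta$ contributes an unambiguous sign.
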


\begin{example}\label{sjex1}
Let $\frg=\frsl(3,\bbC)$ and $I=\{e_1-e_2\}$. Put $\mu=(1, 0, -1)$, $\nu=(1, -1, 0)$ and $\zeta=(0, -1, 1)$. Then $\Psi_\mu^+=\{e_1-e_3, e_2-e_3\}$. So
\[
\sum_{\beta\in\Psi_\mu^+}\theta(s_\beta\mu)=\theta(\nu)+\theta(s_{e_1-e_2}\zeta)=[M_I(\nu)]-[M_I(\zeta)].
\]
Therefore $c(\mu, \nu)=1$ and $c(\mu, \zeta)=-1$. Similarly $c(\nu, \zeta)=1$, while the other Jantzen coefficients are vanished.
\end{example}

Recall that $\Rad_{i}M:=\Rad^{i}M/\Rad^{i+1}M$ for each $M\in\caO$ and $i\in\mathbb{Z}$. Theorem \ref{sfthm1} implies the following result.
\begin{cor}\label{jccor1}
Let $\mu\in\Lambda_I^+$. Then
\begin{equation}\label{jceq2}
\sum_{i\geq 1}i[\Rad_{i}M_I(\mu)]=\sum_{\mu>\nu\in\Lambda_I^+}c(\mu, \nu)[M_I(\nu)].
\end{equation}
\end{cor}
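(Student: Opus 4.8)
\textbf{Proof proposal for Corollary \ref{jccor1}.}
The plan is to relate the two sides of \eqref{jceq2} by decomposing the left-hand side using the telescoping identity $\sum_{i\geq 1}i[\Rad_iM_I(\mu)]=\sum_{i\geq 1}[\Rad^iM_I(\mu)]$, and then invoking the sum formula of Theorem \ref{sfthm1} together with the definition \eqref{jceq1} of the Jantzen coefficients. First I would observe that, since $\Rad_iM_I(\mu)=\Rad^iM_I(\mu)/\Rad^{i+1}M_I(\mu)$ and $\Rad^NM_I(\mu)=0$ for $N$ large, summation by parts in the Grothendieck group gives
\[
\sum_{i\geq 1}i[\Rad_iM_I(\mu)]=\sum_{i\geq 1}i\bigl([\Rad^iM_I(\mu)]-[\Rad^{i+1}M_I(\mu)]\bigr)=\sum_{i\geq 1}[\Rad^iM_I(\mu)].
\]
This is the elementary but essential bookkeeping step, valid because the filtration is finite and the alternating sum telescopes.

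Next, I would apply Theorem \ref{sfthm1}, which identifies $\sum_{i\geq 1}[\Rad^iM_I(\mu)]$ with $\sum_{\beta\in\Psi_\mu^+}\theta(s_\beta\mu)$. Finally, expanding the right-hand side according to \eqref{jceq1}, which is legitimate by Proposition \ref{jsprop1}(1)--(3) (each $\theta(s_\beta\mu)$ is, up to a sign, a character $[M_I(\nu)]$ of a generalized Verma module with $\nu\in\Lambda_I^+$, or is zero), we obtain
\[
\sum_{i\geq 1}[\Rad^iM_I(\mu)]=\sum_{\beta\in\Psi_\mu^+}\theta(s_\beta\mu)=\sum_{\mu>\nu\in\Lambda_I^+}c(\mu,\nu)[M_I(\nu)],
\]
which is exactly \eqref{jceq2}. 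One should note that the terms in \eqref{jceq1} only involve $\nu<\mu$ strictly (since $\beta\in\Psi_\mu^+$ forces $s_\beta\mu<\mu$, hence $w_\beta s_\beta\mu<\mu$ for the relevant $w_\beta\in W_I$), so the indexing $\mu>\nu\in\Lambda_I^+$ is consistent on both sides.

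I do not expect any serious obstacle here: the corollary is a direct formal consequence of Theorem \ref{sfthm1} and the definition of Jantzen coefficients, with the only nontrivial ingredient being the telescoping identity, which is standard once one knows the radical filtration is finite. The mild point to be careful about is ensuring that the summation-by-parts manipulation is carried out in the Grothendieck group (where it is just linear algebra over $\bbZ$) rather than at the level of modules, and that the finiteness of the radical filtration — guaranteed since $M_I(\mu)$ has finite length in $\caO^\frp$ — is invoked to justify interchanging the order of summation.
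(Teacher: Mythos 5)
Your proof is correct and follows exactly the same route as the paper: the telescoping identity $\sum_{i\geq 1}i[\Rad_iM_I(\mu)]=\sum_{i\geq 1}[\Rad^iM_I(\mu)]$ (valid because $M_I(\mu)$ has finite length, so the radical filtration terminates), then Theorem~\ref{sfthm1}, then the defining equation~\eqref{jceq1} of the Jantzen coefficients. The paper states the corollary as an immediate consequence of Theorem~\ref{sfthm1} without spelling out the telescoping step, which you have correctly identified as the only bookkeeping needed.
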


The following example shows how to use (\ref{jceq2}) to get the radical filtration of a generalized Verma modules.

\begin{example}\label{sjex2}
Using notations in Example \ref{sjex1}, (\ref{jceq2}) yields
\[
\sum_{i\geq 1}i[\Rad_{i}M_I(\zeta)]=0.
\]
This forces $\Rad_{i}M_I(\zeta)=0$ for $i\geq 1$, that is, $M_I(\zeta)=\Rad_{0}M_I(\zeta)=L(\zeta)$. Similarly,
\[
\sum_{i\geq 1}i[\Rad_{i}M_I(\nu)]=c(\nu, \zeta)[M_I(\zeta)]=[L(\zeta)].
\]
We must have $\Rad_{1}M_I(\nu)=L(\zeta)$ and $\Rad_{i}M_I(\nu)=0$ for $i>1$. At last,
\[
\sum_{i\geq 1}i[\Rad_{i}M_I(\mu)]=c(\mu, \nu)[M_I(\nu)]+c(\mu, \zeta)[M_I(\zeta)]=[L(\nu)]
\]
implies $\Rad_{1}M_I(\mu)=L(\nu)$ and $\Rad_{i}M_I(\nu)=0$ for $i>1$. To summarize:
\[
M_I(\mu)=\begin{aligned}
&L(\mu)\\
&L(\nu)
\end{aligned} \qquad
M_I(\nu)=\begin{aligned}
&L(\nu)\\
&L(\zeta)
\end{aligned} \qquad
M_I(\zeta)=\begin{aligned}
&L(\zeta)
\end{aligned}
\]
\end{example}

In order to determine the radical filtration of more generalized Verma modules, we might need the following results about Gelfand-Kirillov dimension. Details can be found in \cite{V1}. Suppose $M$ is a $U(\mathfrak{g})$-module generated by a finite-dimensional subspace $M_0$. For each $n\in\mathbb{N}$, we define $\varphi_{M,M_0}(n)=\dim( U_n(\mathfrak{g})M_{0})$, where $U_{n}(\mathfrak{g})$ is the $\mathbb{C}$-subspace of the universal enveloping algebra $U(\mathfrak{g})$ of $\frg$ spanned by all the products $y_1y_2\cdots y_s$ with $s\leq n$ and $y_i\in\mathfrak{g}$ for each $i$.

\begin{lemma}[{\cite[Lemma 2.1]{V1}}]\label{gklem1}
There exists a unique polynomial $\overline{\varphi}_{M,M_0}(v)\in\mathbb{Q}[v]$ such that $\overline{\varphi}_{M,M_0}(n)=\varphi_{M,M_0}(n)$ for large $n$. The leading term
of $\overline{\varphi}_{M,M_0}(v)$ is $\frac{c(M)}{(d_{M})!}v^{d_{M}}$, where $c(M)$, $d_M$ are nonnegative integers independent of $M_0$.
\end{lemma}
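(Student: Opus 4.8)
The final statement to prove is Lemma \ref{gklem1}, the Bernstein-type result on the Hilbert polynomial of a finitely generated $U(\mathfrak{g})$-module, giving the existence of $\overline{\varphi}_{M,M_0}$, its rationality, and independence of its leading term on $M_0$. Since this is cited as \cite[Lemma 2.1]{V1}, the proof will be a fairly standard filtered-algebra/associated-graded argument, but let me sketch how I would carry it out.

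\textbf{Plan.} The plan is to pass to the associated graded algebra. First I would observe that the standard filtration $U_0(\mathfrak{g})\subseteq U_1(\mathfrak{g})\subseteq\cdots$ makes $U(\mathfrak{g})$ into a filtered algebra whose associated graded algebra is, by the PBW theorem, the symmetric algebra $S(\mathfrak{g})$, a commutative polynomial ring in $\dim\mathfrak{g}$ variables. Next, starting from the finite-dimensional generating subspace $M_0$, I would equip $M$ with the induced filtration $M_n:=U_n(\mathfrak{g})M_0$, so that $\operatorname{gr}M:=\bigoplus_{n\geq 0}M_n/M_{n-1}$ (with $M_{-1}:=0$) is a finitely generated graded module over $\operatorname{gr}U(\mathfrak{g})=S(\mathfrak{g})$, generated in degree $0$ by the image of $M_0$. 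Then $\varphi_{M,M_0}(n)=\dim M_n=\sum_{k=0}^n\dim(\operatorname{gr}M)_k$, so the result reduces to the Hilbert--Serre theorem: a finitely generated graded module over a polynomial ring has a Hilbert function that agrees with a polynomial (in $\mathbb{Q}[v]$) for large degree, and summing such a polynomial gives again a polynomial in $\mathbb{Q}[v]$, of degree one higher, whose leading term has the stated form $\tfrac{c(M)}{d_M!}v^{d_M}$ with $c(M),d_M\in\mathbb{Z}^{\geq 0}$. I would invoke the Hilbert--Serre theorem as a black box, or sketch the standard induction on the number of variables using the exact sequence given by multiplication by one variable.

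\textbf{Independence of $M_0$.} The remaining point is that $c(M)$ and $d_M$ do not depend on the choice of finite-dimensional generating subspace $M_0$. For this I would take a second finite-dimensional generating subspace $M_0'$. Since both generate $M$ and both are finite-dimensional, there is an integer $r$ with $M_0'\subseteq M_r=U_r(\mathfrak{g})M_0$ and symmetrically $M_0\subseteq U_r(\mathfrak{g})M_0'$ (enlarging $r$ if necessary). This gives the sandwich $U_n(\mathfrak{g})M_0\subseteq U_{n+r}(\mathfrak{g})M_0'$ and $U_n(\mathfrak{g})M_0'\subseteq U_{n+r}(\mathfrak{g})M_0$ for all $n$, hence $\varphi_{M,M_0}(n)\leq\varphi_{M,M_0'}(n+r)$ and vice versa. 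Comparing the two polynomials $\overline{\varphi}_{M,M_0}$ and $\overline{\varphi}_{M,M_0'}$ at large $n$ forces them to have the same degree $d_M$ and the same leading coefficient, since a polynomial inequality $\overline{\varphi}_{M,M_0}(n)\leq\overline{\varphi}_{M,M_0'}(n+r)$ holding for all large $n$ constrains the top terms; running the inequality both ways pins down $d_M$ and $c(M)/d_M!$ exactly. This is the one place where a small estimate is needed, but it is elementary.

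\textbf{Main obstacle.} There is no deep obstacle here — this is classical commutative-algebra bookkeeping. The only mildly delicate point, and the part I would write out carefully, is the comparison argument showing $c(M)$ and $d_M$ are genuinely independent of $M_0$: one must be careful that the shift by $r$ in the argument of the polynomial does not affect the leading term, which is true precisely because shifting the argument of a polynomial of degree $d$ changes only lower-order terms. Everything else (PBW, Hilbert--Serre, the summation identity $\sum_{k\leq n}\binom{k+t}{t}=\binom{n+t+1}{t+1}$ producing a rational polynomial of degree one higher) is standard and can be cited or stated without proof.
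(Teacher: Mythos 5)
Your proposal is correct and is essentially the standard argument that the cited reference \cite[Lemma 2.1]{V1} uses (the paper itself does not reprove the lemma, only cites Vogan). You pass to the associated graded module over $S(\mathfrak{g})$ via PBW, invoke Hilbert--Serre for eventual polynomiality and integrality of the normalized leading coefficient, and settle independence of $M_0$ by the sandwich $U_n(\mathfrak{g})M_0\subseteq U_{n+r}(\mathfrak{g})M_0'\subseteq U_{n+2r}(\mathfrak{g})M_0$, observing that shifting the argument of a polynomial by a constant does not change its degree or leading coefficient; this is exactly Vogan's route.
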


The integer $d_M$ is the \emph{Gelfand-Kirillov dimension} of $M$. We write $d_M=\GK(M)$, while $c(M)$ is called the {\it Bernstein degree} of $M$. For $d\in\bbZ^{\geq0}$, denote by $c_d(M)$ the coefficient of $v^d/d!$ in the polynomial $\overline{\varphi}_{M,M_0}(v)$.

\begin{lemma}[{\cite[Lemma 2.4]{V1}}]\label{gklem2}
Suppose $0\rightarrow A\rightarrow B\rightarrow C\rightarrow 0$ is an exact sequence of finitely generated $U(\mathfrak{g})$ modules $A, B, C$. Then
$d_B=\max(d_A,d_C)$ and $c_d(B)=c_d(A)+c_d(C)$, where $d=d_B$.
\end{lemma}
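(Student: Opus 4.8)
\textbf{Proof proposal for Lemma \ref{gklem2}.}

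The plan is to prove the two claims separately, first the dimension statement $d_B=\max(d_A,d_C)$ and then the coefficient statement $c_d(B)=c_d(A)+c_d(C)$, both by comparing the growth polynomials $\overline{\varphi}_{M,M_0}$ attached to compatible choices of generating subspaces. The crucial preliminary observation is that all the invariants $d_M$ and $c_d(M)$ are, by Lemma \ref{gklem1}, independent of the choice of finite-dimensional generating subspace, so I am free to pick whichever generators are most convenient for the exact sequence at hand. Accordingly, fix a finite-dimensional subspace $B_0\subset B$ that generates $B$ over $U(\frg)$; let $A_0:=A\cap B_0$ (identifying $A$ with the submodule $\ker(B\to C)$) and let $C_0$ be the image of $B_0$ in $C$. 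Then $C_0$ generates $C$, and after possibly enlarging $B_0$ (which changes nothing by the independence just quoted) we may arrange that $A_0$ generates $A$ as well.

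First I would treat the upper/right end. For each $n$ there is an exact sequence of finite-dimensional spaces
\[
0\to U_n(\frg)B_0\cap A\to U_n(\frg)B_0\to U_n(\frg)C_0\to 0,
\]
the surjectivity on the right being immediate from $B_0\twoheadrightarrow C_0$ and the left term being by definition the kernel. This already gives $\varphi_{B,B_0}(n)=\dim\bigl(U_n(\frg)B_0\cap A\bigr)+\varphi_{C,C_0}(n)$ for all $n$. Since $U_n(\frg)A_0\subseteq U_n(\frg)B_0\cap A\subseteq U_{n}(\frg)B_0$ and $U_n(\frg)B_0\cap A\subseteq U_{n+r}(\frg)A_0$ for a fixed $r$ with $B_0\subseteq U_r(\frg)A_0+(\text{a complement mapping injectively to }C_0)$ — more cleanly, $U_n(\frg)B_0\cap A$ is a finitely generated graded-ish submodule squeezed between $U_n(\frg)A_0$ and $U_{n+r}(\frg)A_0$ for suitable $r$, using that $A$ is finitely generated and Noetherian arguments on $U(\frg)$ — the function $n\mapsto\dim(U_n(\frg)B_0\cap A)$ agrees for large $n$ with a polynomial having the same degree and leading coefficient as $\overline{\varphi}_{A,A_0}$. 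Feeding this into the displayed additivity of dimensions and reading off degrees gives $d_B=\max(d_A,d_C)$, and reading off the coefficient of $v^d/d!$ for $d=d_B$ gives $c_d(B)=c_d(A)+c_d(C)$ (where by convention $c_d(M)=0$ if $d>d_M$, so the formula is correct whichever of $d_A,d_C$ realizes the maximum, or if they are equal).

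The main obstacle is the squeezing step: showing that $n\mapsto\dim\bigl(U_n(\frg)B_0\cap A\bigr)$ has the same leading behavior as $\overline{\varphi}_{A,A_0}$, i.e. that intersecting with $A$ the Poincaré--Birkhoff--Witt filtration of $B$ induced by $B_0$ yields (up to a bounded shift $n\mapsto n\pm r$) the PBW filtration of $A$ induced by some finite-dimensional generating subspace. This is where one uses that $U(\frg)$ is a Noetherian ring with a well-behaved (Artin--Rees-type) filtration: a finitely generated submodule of a filtered finitely generated $U(\frg)$-module inherits, up to a bounded index shift, a good filtration, and good filtrations on a fixed module all give the same Hilbert polynomial degree and leading term. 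Everything else — the exactness of the three-term sequence of finite-dimensional spaces, the additivity $\varphi_{B,B_0}=\dim(\cdot\cap A)+\varphi_{C,C_0}$, and the extraction of $d_B$ and $c_d(B)$ — is then a routine bookkeeping of polynomial degrees and leading coefficients, and I would not spell those out beyond citing Lemma \ref{gklem1}.
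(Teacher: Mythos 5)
The paper does not prove this lemma; it is cited verbatim from Vogan \cite[Lemma 2.4]{V1}. Your argument is correct and is essentially Vogan's: the term-by-term exact sequence of finite-dimensional spaces $0\to U_n(\frg)B_0\cap A\to U_n(\frg)B_0\to U_n(\frg)C_0\to 0$ gives additivity of $\varphi$, and the key technical input is that $\{U_n(\frg)B_0\cap A\}_n$ is a good filtration of $A$ (because $\gr\bigl(U_\bullet(\frg)B_0\cap A\bigr)$ is a submodule of the finitely generated $S(\frg)$-module $\gr\bigl(U_\bullet(\frg)B_0\bigr)$ and $S(\frg)$ is Noetherian), hence commensurable with the filtration $\{U_n(\frg)A_0\}_n$ and giving the same degree and leading coefficient — exactly the Artin–Rees/good-filtration mechanism you name. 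Your first attempt at the squeezing step (splitting $B_0$ against a complement mapping into $C_0$) would not work as stated, but you correctly abandon it in favor of the Noetherian argument, so the proof as written is sound.
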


The following result is well known.
\begin{lemma}\label{gklem3}
For $\lambda\in\Lambda_I^+$, set $d=\GK(M_I(\lambda))$. Then $d=|\Phi^+\backslash\Phi_I|$ and $c_d(M_I(\lambda))=\dim F(\lambda-\rho)$.
\end{lemma}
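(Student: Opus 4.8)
\textbf{Proof proposal for Lemma \ref{gklem3}.}

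The plan is to compute $\GK(M_I(\lambda))$ and its top-degree coefficient directly from the PBW realization of the generalized Verma module. First I would use the decomposition $\frg = \fru_I^- \oplus \frl_I \oplus \fru_I$, where $\fru_I^-$ is the nilradical opposite to $\fru_I$, so that $\dim\fru_I^- = |\Phi^+\setminus\Phi_I^+| = |\Phi^+\setminus\Phi_I|$ (using that $\Phi_I = -\Phi_I$ and $\Phi_I^+ = \Phi_I\cap\Phi^+$). By the Poincar\'e--Birkhoff--Witt theorem applied to $M_I(\lambda) = U(\frg)\otimes_{U(\frp_I)} F(\lambda-\rho)$, one has an isomorphism of vector spaces $M_I(\lambda)\cong U(\fru_I^-)\otimes_{\bbC} F(\lambda-\rho)$, and more importantly a filtered identification: taking $M_0 := 1\otimes F(\lambda-\rho)$ as the finite-dimensional generating subspace, the space $U_n(\frg)M_0$ is spanned by $U_n(\fru_I^-)\otimes F(\lambda-\rho)$ (one moves $\frl_I$- and $\fru_I$-factors past the tensor using the $\frp_I$-module structure, which does not increase the filtration degree since $\frp_I$ acts on the finite-dimensional $F(\lambda-\rho)$). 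Hence $\varphi_{M_I(\lambda),M_0}(n) = \dim U_n(\fru_I^-)\cdot\dim F(\lambda-\rho)$ for all $n$.

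Next I would invoke the standard fact that for a finite-dimensional Lie algebra $\frn$ of dimension $d$, $\dim U_n(\frn) = \binom{n+d}{d}$, which is a polynomial in $n$ of degree $d$ with leading term $n^d/d!$. Applying this with $\frn = \fru_I^-$ and $d = |\Phi^+\setminus\Phi_I|$ gives that $\overline{\varphi}_{M_I(\lambda),M_0}(v) = \dim F(\lambda-\rho)\cdot\binom{v+d}{d}$, a polynomial of degree $d$ whose leading term is $\dfrac{\dim F(\lambda-\rho)}{d!}\,v^{d}$. By the definition in Lemma \ref{gklem1} this identifies $\GK(M_I(\lambda)) = d = |\Phi^+\setminus\Phi_I|$ and $c_d(M_I(\lambda)) = \dim F(\lambda-\rho)$, as claimed. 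Since by Lemma \ref{gklem1} these quantities are independent of the choice of generating subspace, the particular choice $M_0 = 1\otimes F(\lambda-\rho)$ is legitimate.

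The only point requiring a little care — and the step I expect to be the main (though minor) obstacle — is the claim $U_n(\frg)M_0 = U_n(\fru_I^-)M_0$, i.e.\ that passing generators of $\frp_I$ through the tensor product does not inflate the PBW filtration degree. This follows because under the identification $M_I(\lambda)\cong U(\fru_I^-)\otimes F(\lambda-\rho)$, for $x\in\frp_I$ and $u\in U(\fru_I^-)$ of filtration degree $\le k$ one has $x\cdot(u\otimes f) = (\mathrm{ad}\text{-reordering terms in }U_{\le k}(\fru_I^-))\otimes f + u\otimes(x\cdot f)$, where $x\cdot f$ uses the $\frl_I$-action on $F(\lambda-\rho)$ (with $\fru_I$ acting as zero); crucially the bracket of $\frp_I$ with $\fru_I^-$ lands in $\fru_I^- \oplus \frl_I$, and repeatedly commuting keeps one inside $U_{\le k}(\fru_I^-)\cdot\frp_I$ modulo the finite-dimensional $F(\lambda-\rho)$. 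Thus an induction on $n$ gives $U_n(\frg)M_0\subseteq U_n(\fru_I^-)\otimes F(\lambda-\rho)$, and the reverse inclusion is obvious. Alternatively, one may simply cite the well-known fact (e.g.\ from \cite{V1}) that $\GK$ and the top coefficient of an induced module $U(\frg)\otimes_{U(\frp)}F$ equal $\dim(\frg/\frp)$ and $\dim F$ respectively; I would include the short PBW argument for completeness.
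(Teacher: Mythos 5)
The paper gives no proof of this lemma (it is labeled ``well known''), so there is nothing of the authors' own to compare against; your PBW argument is the standard one and is correct, and it in fact yields the sharper conclusion $\varphi_{M_I(\lambda),M_0}(n)=\dim F(\lambda-\rho)\binom{n+d}{d}$ exactly for all $n$, not merely for $n$ large. One simplification of your most delicate step: rather than tracking $\ad$-reordering terms, use the PBW triangular decomposition $U_n(\frg)=\sum_{i+j+k\leq n}U_i(\fru_I^-)\,U_j(\frl_I)\,U_k(\fru_I)$ directly; since $\fru_I$ annihilates and $\frl_I$ preserves $M_0=1\otimes F(\lambda-\rho)$, this collapses immediately to $U_n(\frg)M_0=U_n(\fru_I^-)M_0$.
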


When $I$ is fixed, a weight $\mu\in\Lambda_I^+$ is called \emph{socular} if it appears as a summand in the socle of some generalized Verma modules. The following result can be found in \cite[\S1, \S4.6]{I3}.

\begin{lemma}\label{gklem4}
Let $\mu\in\Lambda_I^+$. Then
\begin{itemize}
\item [(1)] $\mu$ is socular if and only if $\GK(L(\mu))=|\Phi^+\backslash\Phi_I|$.
\item [(2)] If $\mu$ is socular, the set $\{\zeta\in\Lambda_I^+\mid [M_I(\zeta): L(\mu)]>0\}$ contains a unique maximal element $m(\mu)$. Moreover, $M_I(m(\mu))$ has simple socle $L(\mu)$ and $[M_I(m(\mu)): L(\mu)]=1$.
\end{itemize}
\end{lemma}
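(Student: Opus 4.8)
\textbf{Proof proposal for Lemma \ref{gklem4}.}

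The plan is to establish both parts using the Gelfand-Kirillov dimension machinery (Lemmas \ref{gklem1}, \ref{gklem2}, \ref{gklem3}) together with the duality on $\caO^\frp$. First, for part (1), I would argue the direction ``socular $\Rightarrow \GK(L(\mu)) = |\Phi^+\backslash\Phi_I|$'' as follows: if $L(\mu)$ is a summand of $\Soc M_I(\zeta)$ for some $\zeta\in\Lambda_I^+$, then there is an inclusion $L(\mu)\hookrightarrow M_I(\zeta)$, so by Lemma \ref{gklem2} applied to the short exact sequence $0\to L(\mu)\to M_I(\zeta)\to Q\to 0$ we get $\GK(L(\mu))\leq\GK(M_I(\zeta)) = |\Phi^+\backslash\Phi_I|$ by Lemma \ref{gklem3}. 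The reverse inequality $\GK(L(\mu))\geq |\Phi^+\backslash\Phi_I|$ is automatic because $L(\mu)$ is a quotient of $M_I(\mu)$, and $\GK$ of a nonzero quotient cannot drop below\ldots actually the cleanest way is: $L(\mu)$ has $\GK$ at least that of any generalized Verma module of which it is a composition factor appearing with the ``full'' Bernstein degree, so I'd instead note directly that $[M_I(\zeta):L(\mu)]>0$ forces, via Lemma \ref{gklem2} on a composition series, $\GK(L(\mu)) \leq |\Phi^+\backslash\Phi_I|$, and $c_d$-additivity shows that some factor must carry the leading coefficient; being in the socle pins it down. For the converse ``$\GK(L(\mu)) = |\Phi^+\backslash\Phi_I| \Rightarrow \mu$ socular'': take any $\zeta\in\Lambda_I^+$ maximal with $[M_I(\zeta):L(\mu)]>0$ (such $\zeta$ exists, e.g.\ $\zeta=\mu$ works as a lower bound and the set is finite by finiteness of blocks up to the relevant linkage); I claim $L(\mu)$ embeds in $\Soc M_I(\zeta)$. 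Indeed, by maximality of $\zeta$ and the structure of $\caO^\frp$, any composition factor $L(\eta)$ of $\Rad M_I(\zeta)$ has $\eta<\zeta$, and the dual $M_I(\zeta)^\vee$ has $L(\mu)$ in its head; combining the $\GK$ hypothesis with Lemma \ref{gklem2} shows $L(\mu)$ cannot lie in $\Rad M_I(\zeta)$ properly deep, forcing it into the socle.

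For part (2), assume $\mu$ is socular and set $S = \{\zeta\in\Lambda_I^+\mid [M_I(\zeta):L(\mu)]>0\}$, which is finite and nonempty (it contains $\mu$). The key point is to show $S$ has a unique maximal element. Suppose $\zeta_1,\zeta_2\in S$ are both maximal. Using the self-duality of $M_I(\zeta_i)^\vee$ and part (1), the condition $\GK(L(\mu)) = |\Phi^+\backslash\Phi_I| = \GK(M_I(\zeta_i))$ together with Lemma \ref{gklem2} (the $c_d$-additivity on a composition series of $M_I(\zeta_i)$) forces $[M_I(\zeta_i):L(\mu)]$ to contribute a full $\dim F(\mu-\rho)$-worth of leading coefficient; more precisely, since every composition factor $L(\eta)$ of $M_I(\zeta_i)$ with $\GK(L(\eta)) = |\Phi^+\backslash\Phi_I|$ must be socular and the sum of their $c_d$-values equals $c_d(M_I(\zeta_i)) = \dim F(\zeta_i-\rho)$, one controls multiplicities. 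Then I would invoke the BGG-reciprocity / Koszul-duality statements from Section 3 (Theorems \ref{graddecomp11}, \ref{inversegraddecomp11} and Corollary \ref{radcor}) to translate $[M_I(\zeta):L(\mu)]$ into the parabolic Kazhdan--Lusztig--Vogan polynomials ${}^IP^J_{\cdot,\cdot}$ and use the known fact that a socular $L(\mu)$ corresponds to an element $w$ with $w = {}^Ig^J$-image being the longest in its coset, so that $m(\mu)$ corresponds to the unique ``largest'' index $x$ with ${}^IP^J_{x,w}(0)\neq 0$. The simplicity of the socle $\Soc M_I(m(\mu)) = L(\mu)$ and $[M_I(m(\mu)):L(\mu)]=1$ then follow from the $\GK$-count: if the socle had another summand $L(\mu')$, it too would be socular with full $\GK$, and additivity of $c_d$ over the socle plus the rigidity-type bound would exceed $\dim F(m(\mu)-\rho)$ unless it is simple with multiplicity one.

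The main obstacle I anticipate is the uniqueness of the maximal element $m(\mu)$ in part (2): showing that two distinct maximal $\zeta_1,\zeta_2$ cannot both occur requires either a genuine input from the combinatorics of ${}^IW^J$ (via Proposition \ref{bnprop1} and the bijections ${}^If^J,{}^Ig^J$, identifying socular weights with those $w$ for which $w^{-1}w_Iw_0$ is $W_J$-maximal, hence in bijection with a single ``anti-spherical'' element), or a representation-theoretic input about the structure of projective-injective modules in $\caO^\frp$ — namely that the indecomposable projective-injective covering $L(\mu)$ is unique, and its generalized-Verma flag has a unique top layer $M_I(m(\mu))$. I would lean on the latter: $L(\mu)$ socular means $L(\mu) = \Soc P$ for the indecomposable projective-injective $P$ (which is also the injective hull of $L(\mu)$), $P$ has a $\mathbb{\Delta}^\mu$-flag, and the largest $\zeta$ with $M_I(\zeta)$ appearing in that flag is forced to be unique because $P$ is indecomposable and self-dual; then $M_I(m(\mu))\hookrightarrow P$ has $\Soc M_I(m(\mu))\supseteq\Soc P = L(\mu)$, and the $\GK$/Bernstein-degree count of Lemma \ref{gklem3} forces equality and multiplicity one. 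Everything else (finiteness of $S$, the inequality direction of part (1), the $c_d$-bookkeeping) is routine given Lemmas \ref{gklem1}--\ref{gklem3} and the duality on $\caO^\frp$.
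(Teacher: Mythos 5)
The paper does not prove this lemma at all: it cites Irving \cite[\S1, \S4.6]{I3} and moves on. Your proposal is therefore an attempted reconstruction, and it has genuine gaps in both parts. In part (1) the direction ``socular $\Rightarrow$ full $\GK$'' needs the lower bound $\GK(L(\mu))\geq |\Phi^+\setminus\Phi_I|$, and your proposed justifications do not give it. Your first attempt (``$L(\mu)$ is a quotient of $M_I(\mu)$, and $\GK$ of a quotient cannot drop'') is simply false --- $\GK$ of a quotient can drop arbitrarily. Your second attempt (``$c_d$-additivity shows some factor carries the leading coefficient; being in the socle pins it down'') does not follow either: $c_d$-additivity says the leading coefficient is distributed over composition factors of maximal $\GK$-dimension, but there is no a priori reason the socle is among them. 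The key fact you are missing is that $M_I(\zeta)$ is a \emph{free} $U(\mathfrak{u}_I^-)$-module (where $\mathfrak{u}_I^-$ is the opposite nilradical), so any nonzero submodule is torsion-free over the domain $U(\mathfrak{u}_I^-)$ and hence has $\GK$-dimension at least $\dim\mathfrak{u}_I^-=|\Phi^+\setminus\Phi_I|$. For the converse ``$\GK$ full $\Rightarrow$ socular,'' your claim that ``the $\GK$ hypothesis with Lemma~\ref{gklem2} shows $L(\mu)$ cannot lie in $\Rad M_I(\zeta)$ properly deep, forcing it into the socle'' is a non sequitur: $\GK$-dimension carries no information about position in the radical filtration. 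Irving's argument here is genuinely different (it passes through projective-injective objects and the identification of socular weights with those whose projective cover is injective, using associated-variety / $\mathfrak{u}_I^-$-cohomology considerations).

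In part (2) the uniqueness of $m(\mu)$ is the real content and your treatment leaves it open. It is not true in general that a $\Delta$-flag of an indecomposable self-dual projective has a unique ``top'' layer merely because of indecomposability and self-duality; the flag is not canonical and incomparable highest weights can occur at either end of different flags. What is needed, and what you do not supply, is Irving's identification of the socular simples with socles of indecomposable projective-injectives together with the fact that for socular $\mu$ the injective hull $I(\mu)=P(\mu')$ for a \emph{unique} $\mu'$ and that $M_I(\mu')\hookrightarrow P(\mu')$ occurs exactly once in any $\Delta$-flag, which then gives $\Soc M_I(\mu')=L(\mu)$ and $[M_I(\mu'):L(\mu)]=1$ simultaneously. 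The $c_d$ bookkeeping and the Kazhdan--Lusztig combinatorics you invoke do not by themselves rule out two incomparable maximal elements. Since the paper simply defers to \cite{I3}, the correct move here is to do the same, or to import Irving's torsion-freeness and projective-injective arguments explicitly rather than relying on Bernstein-degree counting.
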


From now on in this section, we will apply the previous results to obtain the radical filtration of basic generalized Verma modules.

\subsection{basic generalized Verma modules}
Let $\Phi$ be an irreducible system and denote by $\Delta=\{\alpha_1, \ldots, \alpha_n\}$ the simple roots corresponding to the standard numbering of vertices in the Dynkin diagram of $\Phi$ (\cite[\S 11.4]{H1}). Let $\lambda$ be an integral anti-dominant weight with $J=\{\alpha\in\Delta\mid\langle\lambda, \alpha^\vee\rangle=0\}$. Denote $K=-w_0J$ and $\nu=w_0\lambda$. Then $K\subset\Delta$ and $\nu$ is dominant. If $\rank\ \Phi_I=\rank\ \Phi_K=\rank\ \Phi-1$, then we call each $M_I(\mu)$ with $\mu\in W\lambda\cap\Lambda_I^+$ a \emph{basic generalized Verma module}, and call the weight $\mu$ is a \emph{basic weight}. We can assume that $I=\Delta\backslash\{\alpha_i\}$ and $K=\Delta\backslash\{\alpha_k\}$ for some $i, k\in\{1, \ldots, n\}$. Thus the category $\caO_\lambda^{\frp}$ with $\frp=\frp_I$ is determined by the triple $(\Phi, i, k)$ which we called a \emph{basic system}. One of the main result in \cite{XZ} is the following classification of basic systems.

\begin{theorem}[{\cite[Theorem 5.8]{XZ}}]\label{jchm1}
A basic system $(\Phi, i, k)$ must be one of the following cases.
\begin{itemize}
\item [(1)] $(A_1, 1, 1)$, $(A_2, 1, 1)$, $(A_2, 1, 2)$, $(A_2, 2, 1)$, $(A_2, 2, 2)$, $(A_3, 2, 2)$;

\item [(2)] $(B_2, 1, 1)$, $(B_2, 1, 2)$, $(B_2, 2, 1)$, $(B_2, 2, 2)$, $(B_3, 2, 2)$, $(B_3, 2, 3)$, $(B_3, 3, 2)$, $(B_4, 3, 3)$;

\item [(3)] $(C_2, 1, 1)$, $(C_2, 1, 2)$, $(C_2, 2, 1)$, $(C_2, 2, 2)$, $(C_3, 2, 2)$,  $(C_3, 2, 3)$, $(C_3, 3, 2)$, $(C_4, 3, 3)$;

\item [(4)] $(D_4, 2, 2)$, $(D_5, 3, 3)$;

\item [(5)] $(E_6, 4, 4)$, $(E_7, 4, 4)$, $(E_7, 4, 5)$, $(E_7, 5, 4)$, $(E_8, 3, 4)$, $(E_8, 4, 3)$, $(E_8, 4, 4)$, $(E_8, 4, 5)$, $(E_8, 5, 4)$, $(E_8, 5, 5)$;

\item [(6)] $(F_4, 2, 2)$, $(F_4, 2, 3)$, $(F_4, 3, 2)$, $(F_4, 3, 3)$;

\item [(7)] $(G_2, 1, 1)$, $(G_2, 1, 2)$, $(G_2, 2, 1)$, $(G_2, 2, 2)$.
\end{itemize}
\end{theorem}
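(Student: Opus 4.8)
The plan is to prove the classification of basic systems (Theorem \ref{jchm1}) by a case-by-case analysis driven by the rank condition $\rank\,\Phi_I=\rank\,\Phi_K=\rank\,\Phi-1$, which forces $I$ and $K$ to be maximal proper subsets of $\Delta$, i.e. $I=\Delta\setminus\{\alpha_i\}$ and $K=\Delta\setminus\{\alpha_k\}$ for single nodes $i,k$. First I would translate the definition: $\lambda$ is anti-dominant integral, $J=\{\alpha\in\Delta\mid\langle\lambda,\alpha^\vee\rangle=0\}$ measures the singularity of the block, $K=-w_0J$, and the parabolic datum $I$ is where the generalized Verma module induces from. The key structural constraint beyond the rank condition is that the block $\caO_\lambda^{\frp}$ must actually be \emph{nontrivial and interesting} in the sense intended in \cite{XZ} — namely, that there exists at least one $\mu\in W\lambda\cap\Lambda_I^+$ giving a genuinely singular generalized Verma module, so $J\neq\emptyset$; combined with maximality this forces $|J|=1$ as well (since $\rank\,\Phi_K=\rank\,\Phi-1$). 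So the real combinatorial content is: enumerate, for each irreducible Dynkin diagram $\Phi$ of rank $n$, all pairs $(i,k)$ of nodes such that the associated category has the required properties.

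The main engine is the following: a basic system $(\Phi,i,k)$ must satisfy the condition that the set ${}^IW^J$ (with $|I|=|J|=n-1$) indexes the simples in $\caO_\lambda^\mu$, and the decomposable-pair / block-reduction analysis of \cite{XZ} shows such a system is \emph{admissible} only when the ``shape'' of the pair of removed nodes is compatible across the diagram automorphism induced by $-w_0$. Concretely I would proceed in steps: (1) record that $-w_0$ acts as a diagram automorphism, so $k$ is determined up to this symmetry by $i$ and by which node of $\Delta$ carries the singularity; in type $A_n$ ($n\geq 2$), $D_{\text{odd}}$, $E_6$ the automorphism is nontrivial, elsewhere it is trivial, so $K=J$ in those latter cases. (2) For each diagram, compute $|{}^IW^J|$ as a function of $(i,k)$ using the standard formula $|W|/(|W_I|\,|W_J|)$ corrected for the double-coset structure, and observe that a basic system is required to have this number bounded (the blocks in \cite{XZ} are small), which immediately eliminates most $(i,k)$. (3) Among the survivors, apply the decomposable-pair criterion from \cite{XZ}: a pair is basic precisely when it cannot be decomposed as a ``product'' of lower-rank pieces, which rules out e.g. $(A_3,1,1)$ (decomposable) while keeping $(A_3,2,2)$. (4) Assemble the surviving list and verify it is exactly the seven families listed.

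The hard part will be step (2)–(3): the bookkeeping of which $(i,k)$ pairs survive, carried out uniformly over all Dynkin types $A$–$G$. This is genuinely a finite but substantial case check, and the subtlety is that the correct notion of ``basic'' is not merely ``$|{}^IW^J|$ small'' but the indecomposability-as-a-pair condition from \cite{XZ}; getting the equivalence between the rank condition plus indecomposability plus the existence of a singular induced module exactly right, and matching it against the explicit list, is where care is needed. I expect to lean heavily on \cite[Theorem 5.8]{XZ} itself — indeed, since this theorem is \emph{quoted} from \cite{XZ}, the honest move here is to cite that reference for the classification and only sketch the reduction: the rank-one-singularity condition forces $|I|=|K|=n-1$, the diagram-automorphism constraint pins down $k$ relative to $i$, and the finiteness/indecomposability analysis of \cite[\S5]{XZ} eliminates all pairs outside the stated list while a direct computation confirms each listed triple does yield a basic system. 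I would therefore present the proof as: reduce to the combinatorial statement, then invoke \cite[Theorem 5.8]{XZ} with the above explanation of how the constraints cut down the possibilities.
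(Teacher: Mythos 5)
The paper does not prove this theorem --- it is quoted verbatim from \cite[Theorem 5.8]{XZ} --- and you correctly recognize this and ultimately propose to cite that reference, which is exactly what the paper does. Your additional sketch of how the case analysis in \cite{XZ} likely proceeds (the rank condition forcing $I$ and $K$ to be node-complements, the $-w_0$ diagram automorphism relating $J$ and $K$, and an indecomposability criterion cutting down the list) is a plausible reconstruction, but since the present paper offers no proof of this statement, there is nothing to compare it against beyond the citation itself.
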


When the category $\caO_\lambda^{\frp}$ associated with $(\Phi, i, k)$ is semisimple, the Jantzen coefficients of the corresponding basic generalized Verma modules are zero. It suffices to consider the non semisimple cases \cite[\S6]{XZ}: $(A_1, 1, 1)$, $(B_3, 2, 2)$ and $(C_3, 2, 2)$, $(E_7, 4, 4)$, $(E_8, 4, 5)$, $(E_8, 5, 4)$ and $(E_8, 4, 4)$. Their basic weights and Jantzen coefficients are given in \cite{XZ}. By \cite[Lemma 5.2]{XZ}, all the basic weights of the basic system $(\Phi, i, k)$ are of the form $cw\varpi_k$ for some $c\in\mathbb{Z}^{>0}$ and $w\in {}^IW^K$. Note that $cw\varpi_k$ and $w\varpi_k$ lie in the same facet, it follows from \cite[Theorem 7.8]{H3} and the fact that Zuckerman functors commute with the translation function functors that there is an equivalence of categories which sends $M_I(cw\varpi_k)$ to $M_I(w\varpi_k)$ so that their radical filtration are in bijective correspondence. Therefore, it suffices to determine the radical filtration of those $M_I(w\varpi_k)$ for $w\in {}^IW^J$. For convenience, the basic weights are parameterized as $\lambda^j=x_j\varpi_k$, where $x_j\in{}^IW^K$ for $1\leq j\leq l$. Here we adopt the ordering in \cite[\S5]{XZ} which makes $s<t$ whenever $\lambda^s>\lambda^t$. We also have $\lambda^j=w_Iy_j\lambda$ with $y_j=w_Ix_jw_0\in {}^IW^{J}$ and $\lambda=w_0\varpi_k$ in view of Proposition \ref{bnprop1}. Moreover, one has $s<t$ and $x_s<x_t$ whenever $y_s>y_t$.

If the category $\caO_\lambda^\frp$ associated with $(\Phi, i, k)$ is not semisimple, then $\Phi$ is one of $A_1$, $B_3$, $C_3$, $E_7$ and $E_8$. It follows that $-w_0$ fix every simple root and $K=J$. For convenience, denote $M_s=M_I(\lambda^s)$, $L_s=L(\lambda^s)$, $c_{s, t}=c(\lambda^s, \lambda^t)$, ${}^IP^J_{s, t}={}^IP^J_{y_s, y_t}$ and ${}^IQ^J_{s, t}={}^IQ^J_{y_s, y_t}$.

%Without loss of generality, we always assume that $\lambda^1\geq\lambda^2\geq\ldots\geq\lambda^l$.

\subsection{Radical filtrations associated with $(A_1, 1, 1)$, $(B_3, 2, 2)$ and $(C_3, 2, 2)$} These three basic systems share the same number of basic weights. The category $\caO_\lambda^\frp$ contains two generalized Verma modules. The unique nonzero Jantzen coefficient is $c_{1, 2}=1$ (see \cite[\S6]{XZ}). In view of (\ref{jceq2}), we have $\sum_{i\geq 1}i[\Rad_{i}M_2]=0$. This forces $\Rad_{i}M_2=0$ for $i\geq1$, that is, $M_2=L_2$ is a simple module. Similarly, we obtain
\[
\sum_{i\geq 1}i[\Rad_{i}M_1]=[M_2]=[L_2].
\]
One must have $\Rad_{i}M_1=0$ for $i>1$ and $\Rad_{1}M_1=L_2$. The radical filtration of $M_1$ is $\begin{tabular}{p{0.3cm}<{\centering}}
 $L_1$ \\
 $L_2$ \\
\end{tabular}$. The $\Ext^1$ poset of the categories is given in Figure \ref{jcfg1}.

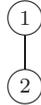
\begin{figure}[htbp]\footnotesize
\setlength{\unitlength}{0.9mm}
\begin{center}
\begin{picture}(0,10)
\put(0,0){\circle{5}}
\put(0,10){\circle{5}}

\put(0,2.5){\line(0,1){5}}

\put(-0.8,-1){$2$}
\put(-0.8,9){$1$}
\end{picture}
\end{center}
\caption{$\Ext^1$ posets for $(A_1, 1, 1)$, $(B_3, 2, 2)$ and $(C_3, 2, 2)$}
\label{jcfg1}
\end{figure}

With the classification of basic weights in \cite{XZ}, one has $\ell(y_1)=1$ and $\ell(y_2)=0$. Therefore ${}^IQ^J_{1, 2}=1={}^IP^J_{2,1}$ by (\ref{ikleq3}) and Corollary \ref{iklcor1}.

Evidently, $\lambda^2$ is the unique socular weight and $m(\lambda^2)=\lambda^1$ (see Lemma \ref{gklem4}).

\subsection{Radical filtrations associated with $(E_7, 4, 4)$} Now the category $\caO_\lambda^{\frp}$ contains $6$ generalized Verma modules. All the nonzero Jantzen coefficients are given in Table \ref{jctb1} (see \cite[\S6]{XZ}).

\begin{table}[htbp]
\begin{tabular}{|c|c|c|c|}
\hline
$i$ & $\{j\mid c_{i, j}=1\}$ & $\{j\mid c_{i, j}=-1\}$ & $\{j\mid c_{i, j}=2\}$ \\
\hline
$1$ & $3$ & $5$ & $6$ \\
\hline
$2$ & $3$ & $6$ & $5$ \\
\hline
$3$ &  &  & $4$ \\
\hline
$4$ & $5, 6$ &  & \\
\hline
\end{tabular}
\bigskip
\caption{Nonzero Jantzen coefficients of $(E_7, 4, 4)$}
\label{jctb1}
\end{table}

First (\ref{jceq2}) implies that $M_6=L_6$ and $M_5=L_5$. Next consider $M_4$. One has
\[
\sum_{i\geq 1}i[\Rad_{i}M_4]=c(4,5)[M_5]+c(4,6)[M_6]=[L_5]+[L_6].
\]
This forces $\Rad_{1}M_4=L_5\oplus L_6$ and $\Rad_{i}M_4=0$ for $i>1$. Then consider $M_3$.
\[
\sum_{i\geq 1}i[\Rad_{i}M_3]=c(3,4)[M_4]=2[L_4]+2[L_5]+2[L_6].
\]
We need to apply the parity property of radical filtrations (see Lemma \ref{ikllem2}) of generalized Verma modules. The length of $y_i$ can be calculated from the weights $\lambda^i$. They are given in Table \ref{jctb2}.

\begin{table}[htbp]
\begin{tabular}{|c|c|c|c|c|c|c|}
\hline
$i$ & $1$ & $2$ & $3$ & $4$ & $5$ & $6$ \\
\hline
$\ell(y_i)$ & $32$ & $26$ & $25$ & $18$ & $17$ & $11$\\
\hline
\end{tabular}
\bigskip
\caption{}
\label{jctb2}
\end{table}

The parity property show that $L_4$ must stay in the odd layers and $L_5, L_6$ must stay in the even layers of $M_3$. This forces $\Rad_{1}M_3=L_4\oplus L_4$, $\Rad_{2}M_3=L_5\oplus L_6$ and $\Rad_{i}M_3=0$ for $i>2$. More effort are needed to deal with $M_2$ and $M_1$. With
\[
\sum_{i\geq 1}i[\Rad_{i}M_2]=[M_3]+2[M_5]-[M_6]=[L_3]+2[L_4]+3[L_5],
\]
the parity property shows that $\Rad_2M_2=L_4$ and $L_3$ is direct summand of $\Rad_1M_2$. There are two possibilities for the position of $L_5$. Either the first layer $\Rad_1M_2$ contain three copies of $L_5$, or $\Rad_3M_2$ contains one copy of $L_5$. We are in a position to invoke the tool of Gelfand-Kirillov dimension. The basic weights $\lambda^1, \ldots, \lambda^6$ are presented in Table \ref{jctb3} (see \cite[\S5]{XZ}).

\renewcommand\arraystretch{1.3}

\begin{table}[H]
\begin{tabular}{|l|c|l|c|}
\hline
$i$ & $\lambda^i$ & $i$ & $\lambda^i$\\
\hline
$1$ & $(\frac{1}{2}, \frac{3}{2}, -\frac{3}{2}, -\frac{1}{2}, \frac{1}{2}, \frac{3}{2}, -\frac{3}{2}, \frac{3}{2})$ & $4$ & $(\frac{1}{2}, \frac{3}{2}, -\frac{5}{2}, -\frac{3}{2}, -\frac{1}{2}, \frac{1}{2}, -\frac{1}{2}, \frac{1}{2})$ \\
\hline
$2$ & $(0, 1, -2, -1, 0, 2, -1, 1)$ & $5$ & $(0, 1, -3, -1, 0, 1, 0, 0)$ \\
\hline
$3$ & $(0, 2, -2, -1, 0, 1, -1, 1)$ & $6$ & $(\frac{1}{2}, \frac{3}{2}, -\frac{5}{2}, -\frac{3}{2}, -\frac{1}{2}, \frac{1}{2}, \frac{1}{2}, -\frac{1}{2})$ \\
\hline
\end{tabular}
\bigskip
\caption{Basic weights of $(E_7, 4, 4)$}
\label{jctb3}
\end{table}

Using Lemma \ref{gklem3}, we can get $d=53$. Weyl's dimension formula \cite{H3} yields $c_d(M_1)=c_d(M_6)=2$, $c_d(M_2)=c_d(M_5)=4$ and $c_d(M_3)=c_d(M_4)=6$. The previous argument and Lemma \ref{gklem2} imply $c_d(L_6)=2$, $c_d(L_5)=4$, $c_d(L_3)=c_d(L_4)=0$ and
\[
c_d(\Rad_1M_2)\leq c_d(M_2)=4<3\times4=3c_d(L_5).
\]
So $\Rad_1M_2$ can not contain three copies of $L_5$. This means $\Rad_1M_2=L_3$ and $\Rad_3M_2=L_5$. Similarly, we can obtain the radical filtration of $M_1$. To summarize:

\[
M_1=\begin{aligned}
&L_1\\
&L_3\\
&L_4\\
&L_6
\end{aligned} \qquad
M_2=\begin{aligned}
&L_2\\
&L_3\\
&L_4\\
&L_5
\end{aligned} \qquad
M_3=\begin{aligned}
&L_3\\
&2L_4\\
L&_5L_6
\end{aligned} \qquad
M_4=\begin{aligned}
&L_4\\
L&_5L_6
\end{aligned} \qquad
M_5=\begin{aligned}
&L_5
\end{aligned} \
M_6=\begin{aligned}
&L_6
\end{aligned}
\]
Here $2L_4$ stands for $L_4\oplus L_4$ and $L_5L_6$ for $L_5\oplus L_6$. Evidently, $\lambda^5$ and $\lambda^6$ are socular weights, while $m(\lambda^5)=\lambda^2$ and $m(\lambda^6)=\lambda^1$.

\begin{figure}[htbp]\footnotesize
\setlength{\unitlength}{0.9mm}
\begin{center}
\begin{picture}(0,40)

\put(-10,5){\circle{5}} \put(10,5){\circle{5}}
\put(0,15){\circle{5}}
\put(0,25){\circle{5}}
\put(-10,35){\circle{5}} \put(10,35){\circle{5}}

\put(-8.15,6.85){\line(1,1){6.3}} \put(8.15,6.85){\line(-1,1){6.3}}
\put(0,17.5){\line(0,1){5}}
\put(1.85,26.85){\line(1,1){6.3}} \put(-1.85,26.85){\line(-1,1){6.3}}

\put(-10.8, 4){$5$}\put(9.2, 4){$6$}
\put(-0.8,14){$4$}
\put(-0.8,24){$3$}
\put(-10.8, 34){$1$}\put(9.2, 34){$2$}

\end{picture}
\end{center}
\caption{$\Ext^1$ poset for $(E_7, 4, 4)$}
\label{pbfig2}
\end{figure}

\subsection{Radical filtrations associated with $(E_8, 5, 4)$ and $(E_8, 4, 5)$} The argument for the case $(E_8, 5, 4)$ is relatively easy. The Jantzen coefficients \cite[\S6]{XZ} and parity property are enough for us to determine all the radical filtrations:

\bigskip

\[
\begin{aligned}
&L_{1}\\
L&_{2}L_{5}\\
&L_{3}
\end{aligned} \quad\
\begin{aligned}
&L_{2}\\
L_{3}&L_{4}L_{8}\\
&L_{6}
\end{aligned} \quad\
\begin{aligned}
&L_{3}\\
L&_{5}L_{6}\\
&L_{7}
\end{aligned} \quad\
\begin{aligned}
&L_{4}\\
&L_{6}\\
&L_{11}
\end{aligned} \quad\
\begin{aligned}
&L_{5}\\
&L_{7}\\
&L_{9}
\end{aligned} \quad\
\begin{aligned}
&L_{6}\\
L_{7}&L_{8}L_{11}\\
&L_{10}
\end{aligned} \quad\
\begin{aligned}
&L_{7}\\
L&_{9}L_{10}\\
&L_{12}
\end{aligned} \quad\
\begin{aligned}
&L_{8}\\
&L_{10}\\
&L_{15}
\end{aligned} \quad\
\begin{aligned}
&L_{9}\\
&L_{12}\\
&L_{14}
\end{aligned}
\]

\bigskip

\[
\begin{aligned}
&L_{10}\\
L_{11}&L_{12}L_{15}\\
&L_{13}
\end{aligned}\quad\
\begin{aligned}
&L_{11}\\
&L_{13}
\end{aligned} \quad\
\begin{aligned}
&L_{12}\\
L&_{13}L_{14}\\
&L_{16}
\end{aligned} \quad\
\begin{aligned}
&L_{13}\\
L&_{15}L_{16}\\
&L_{17}
\end{aligned} \quad\
\begin{aligned}
&L_{14}\\
&L_{16}
\end{aligned} \quad\
\begin{aligned}
&L_{15}\\
&L_{17}
\end{aligned} \quad\
\begin{aligned}
&L_{16}\\
&L_{17}\\
&L_{18}
\end{aligned} \quad\
\begin{aligned}
&L_{17}\\
&L_{18}
\end{aligned} \quad\
\begin{aligned}
&L_{18}
\end{aligned}
\]

\bigskip
There are many socular weights in this case. They are described in Table \ref{jctb4}.
\begin{table}[htbp]\footnotesize
\begin{tabular}{|c|c|c|c|c|c|c|c|c|c|c|c|c|c|}
\hline
Socular weight $\lambda^i$ & $\lambda^3$ & $\lambda^6$ & $\lambda^7$ & $\lambda^9$ & $\lambda^{10}$ & $\lambda^{11}$ & $\lambda^{12}$ & $\lambda^{13}$ & $\lambda^{14}$ & $\lambda^{15}$ & $\lambda^{16}$ & $\lambda^{17}$ & $\lambda^{18}$ \\
\hline
$m(\lambda^i)$ & $\lambda^1$ & $\lambda^2$ & $\lambda^3$ & $\lambda^5$ & $\lambda^{6}$ & $\lambda^{4}$ & $\lambda^{7}$ & $\lambda^{10}$ & $\lambda^{9}$ & $\lambda^{8}$ & $\lambda^{12}$ & $\lambda^{13}$ & $\lambda^{16}$ \\
\hline
\end{tabular}
\bigskip
\caption{}
\label{jctb4}
\end{table}

For the dual case $(E_8, 4, 5)$, we have to use the powerful tools of generalized Kazhdan-Lusztig polynomials. Note that $I=\Delta\backslash\{\alpha_4\}$ and $J=\Delta\backslash\{\alpha_5\}$. Define the map $g$ on the set $\{1, \ldots, 18\}$ such that $g(9)=9$, $g(10)=10$ and $g(i)=19-i$ otherwise. Corollary \ref{iklcor1} yields ${}^IP^J_{i, j}={}^JQ^I_{g(i), g(j)}$ (as in \S5.3, we need the basic weights described in \cite[\S5]{XZ} to get $y_i$). In view of Corollary \ref{gdcor1}, one has
\begin{equation}\label{45eq1}
\sum_{i\leq k\leq j}(-1)^{\ell(y_k)+\ell(y_i)}{}^JQ^I_{g(j), g(k)}{}^IQ^J_{i, k}=\delta_{i, j},
\end{equation}
The length function on $y_i$ is described in Table \ref{jctb5}. This is also the corresponding table for $(E_8, 5, 4)$.
\begin{table}[htbp]\footnotesize
\begin{tabular}{|c|c|c|c|c|c|c|c|c|c|c|c|c|c|c|c|c|c|c|}
\hline
$i$ & $1$ & $2$ & $3$ & $4$ & $5$ & $6$ & $7$ & $8$ & $9$ & $10$ & $11$ & $12$ & $13$ & $14$ & $15$ & $16$ & $17$ & $18$ \\
\hline
$\ell(y_i)$ & $70$ & $61$ & $60$ & $54$ & $53$ & $53$ & $50$ & $46$ & $45$ & $45$ & $44$ & $40$ & $37$ & $37$ & $36$ & $30$ & $29$ & $20$\\
\hline
\end{tabular}
\bigskip
\caption{}
\label{jctb5}
\end{table}
We can easily determine ${}^IQ^J_{i, j}$ from (\ref{ikleq3}), (\ref{45eq1}) and the radical filtrations associated with $(E_8, 5, 4)$. For example, if $i=1, j=2$, one has ${}^IQ^J_{1, 2}={}^JQ^I_{17, 18}=x^4$. If $i=2, j=3$, we have $Q_{2, 3}={}^JQ^I_{16, 17}=1$. Moreover,
\[
Q_{1, 3}={}^JQ^I_{17, 18}Q_{2, 3}-{}^JQ^I_{16, 18}=x^4-x^4=0.
\]
We can eventually get the full table of ${}^IQ^J_{i, j}$ in this fashion. With (\ref{ikleq3}) and Table \ref{jctb5}, the radical filtrations follows immediately:
\[
\begin{aligned}
&L_{1}\\
&L_{2}\\
&L_{4}\\
&L_{10}\\
&L_{12}\\
&L_{14}\\
&L_{18}
\end{aligned} \qquad
\begin{aligned}
&L_{2}\\
L&_{3}L_{4}\\
L_{5}&L_{6}L_{10}\\
L_{7}&L_{8}L_{12}\\
L_{10}&L_{13}L_{14}\\
L_{11}&L_{16}L_{18}\\
&L_{17}
\end{aligned} \qquad
\begin{aligned}
&L_{3}\\
L&_{5}L_{6}\\
L&_{7}L_{8}\\
L&_{10}L_{13}\\
L&_{11}L_{16}\\
&L_{17}
\end{aligned} \qquad
\begin{aligned}
&L_{4}\\
L&_{6}L_{10}\\
L_{7}&L_{8}L_{12}\\
L_{9}L&_{10}L_{13}L_{14}\\
L_{11}L&_{12}L_{16}L_{18}\\
L&_{13}L_{17}\\
&L_{15}
\end{aligned} \qquad
\begin{aligned}
&L_{5}\\
&L_{7}\\
&L_{10}\\
&L_{11}\\
&L_{17}\\
&L_{18}
\end{aligned} \qquad
\begin{aligned}
&L_{6}\\
L&_{7}L_{8}\\
L_{9}&L_{10}L_{13}\\
L_{11}&L_{12}L_{16}\\
L&_{13}L_{17}\\
&L_{15}
\end{aligned}
\]

\smallskip

\[
\begin{aligned}
&L_{7}\\
L&_{9}L_{10}\\
L&_{11}L_{12}\\
L&_{13}L_{17}\\
L&_{15}L_{18}
\end{aligned} \qquad\
\begin{aligned}
&L_{8}\\
L&_{10}L_{13}\\
L_{11}&L_{12}L_{16}\\
L_{13}&L_{14}L_{17}\\
L_{15}&L_{16}L_{18}\\
&L_{17}
\end{aligned} \qquad\
\begin{aligned}
&L_{9}\\
&L_{12}\\
&L_{13}\\
&L_{15}
\end{aligned} \qquad\
\begin{aligned}
&L_{10}\\
L&_{11}L_{12}\\
L_{13}&L_{14}L_{17}\\
L_{15}&L_{16}2L_{18}\\
&L_{17}
\end{aligned} \qquad\
\begin{aligned}
&L_{11}\\
L&_{13}L_{17}\\
L_{15}&L_{16}L_{18}\\
&L_{17}
\end{aligned}
\]

\bigskip

\[
\begin{aligned}
&L_{12}\\
L&_{13}L_{14}\\
L_{15}&L_{16}L_{18}\\
&L_{17}
\end{aligned} \qquad
\begin{aligned}
&L_{13}\\
L&_{15}L_{16}\\
&L_{17}
\end{aligned} \qquad
\begin{aligned}
&L_{14}\\
L&_{16}L_{18}\\
&L_{17}
\end{aligned} \qquad
\begin{aligned}
&L_{15}\\
&L_{17}\\
&L_{18}
\end{aligned} \qquad
\begin{aligned}
&L_{16}\\
&L_{17}
\end{aligned} \qquad
\begin{aligned}
&L_{17}\\
&L_{18}
\end{aligned} \qquad
\begin{aligned}
&L_{18}
\end{aligned}
\]
The socular weights are $\lambda^{15}$, $\lambda^{17}$ and $\lambda^{18}$, while $m(\lambda^{15})=\lambda^{4}$, $m(\lambda^{17})=\lambda^{2}$ and $m(\lambda^{18})=\lambda^1$.

\begin{figure}[H]
\setlength{\unitlength}{0.9mm}
\begin{center}
\begin{picture}(0,110)\footnotesize

\put(0,5){\circle{5}}
\put(0,15){\circle{5}}
\put(-10,25){\circle{5}} \put(10,25){\circle{5}}
\put(-10,35){\circle{5}} \put(10,35){\circle{5}}
\put(-10,45){\circle{5}} \put(10,45){\circle{5}}
\put(-10,55){\circle{5}} \put(10,55){\circle{5}}
\put(-10,65){\circle{5}} \put(10,65){\circle{5}}
\put(-10,75){\circle{5}} \put(10,75){\circle{5}}
\put(-10,85){\circle{5}} \put(10,85){\circle{5}}
\put(0,95){\circle{5}}
\put(0,105){\circle{5}}

\put(0,7.5){\line(0,1){5}}
\put(1.85,16.85){\line(1,1){6.3}} \put(-1.85,16.85){\line(-1,1){6.3}}
\put(-10,27.5){\line(0,1){5}} \put(-7.7,26.15){\line(2,1){15.4}}\put(10,27.5){\line(0,1){5}}
\put(-10,37.5){\line(0,1){5}}\put(7.7,36.15){\line(-2,1){15.4}} \put(10,37.5){\line(0,1){5}}
\put(-10,47.5){\line(0,1){5}} \put(-7.7,46.15){\line(2,1){15.4}}\put(10,47.5){\line(0,1){5}}
\put(-10,57.5){\line(0,1){5}}\put(7.7,56.15){\line(-2,1){15.4}} \put(10,57.5){\line(0,1){5}}
\put(-10,67.5){\line(0,1){5}} \put(-7.7,66.15){\line(2,1){15.4}}\put(10,67.5){\line(0,1){5}}
\put(-10,77.5){\line(0,1){5}}\put(7.7,76.15){\line(-2,1){15.4}} \put(10,77.5){\line(0,1){5}}
\put(-8.15,86.85){\line(1,1){6.3}} \put(8.15,86.85){\line(-1,1){6.3}}
\put(0,97.5){\line(0,1){5}}

\put(-1.5,4.0){$18$}
\put(-1.7,14){$17$}
\put(-11.7,24){$16$}\put(8.3,24){$15$}
\put(-11.7,34){$14$}\put(8.3,34){$13$}
\put(-11.7,44){$12$}\put(8.3,44){$11$}
\put(-10.8,54){$9$}\put(8.3,54){$10$}
\put(-10.8,64){$7$}\put(9.2,64){$8$}
\put(-10.8,74){$5$}\put(9.2,74){$6$}
\put(-10.8,84){$3$}\put(9.2,84){$4$}
\put(-0.6,94.0){$2$}
\put(-0.8,104){$1$}

\end{picture}
\end{center}
\caption{$\Ext^1$ poset for $(E_8, 4, 5)$ and $(E_8, 5, 4)$}
\label{jcfg3}
\end{figure}
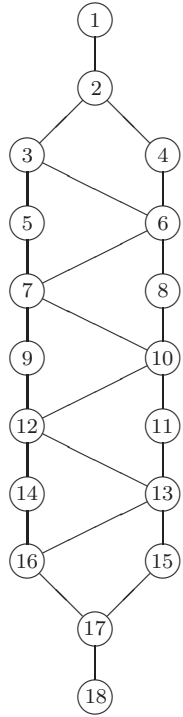

\subsection{Radical filtrations associated with $(E_8, 4, 4)$} The category $\caO_\lambda^\frp$ contains $47$ generalized Verma modules. See the length $\ell(y_i)$ in Table \ref{jctb6}.
\begin{table}[htbp]\footnotesize
\begin{tabular}{|c|c|c|c|c|c|c|c|c|c|c|c|c|c|c|c|c|c|c|}
\hline
$i$ & $1$ & $2$ & $3$ & $4$ & $5$ & $6$ & $7$ & $8$ & $9$ & $10$ & $11$ & $12$ & $13$ & $14$ & $15$ & $16$\\
\hline
$\ell(y_i)$ & $74$ & $67$ & $66$ & $66$ & $60$ & $59$ & $59$ & $58$ & $57$ & $57$ & $56$ & $52$ & $52$ & $52$ & $51$ & $51$\\
\hline
$i$ & $17$ & $18$ & $19$ & $20$ & $21$ & $22$ & $23$ & $24$ & $25$ & $26$ & $27$ & $28$ & $29$ & $30$ & $31$ & $32$ \\
\hline
$\ell(y_i)$ & $50$ & $50$ & $50$ & $49$ & $49$ & $46$ & $46$ & $46$ & $46$ & $46$ & $43$ & $43$ & $42$ & $42$ & $42$ & $41$ \\
\hline
$i$ & $33$ & $34$ & $35$ & $36$ & $37$ & $38$ & $39$ & $40$ & $41$ & $42$ & $43$ & $44$ & $45$ & $46$ & $47$ &  \\
\hline
$\ell(y_i)$ & $41$ & $40$ & $40$ & $40$ & $36$ & $35$ & $35$ & $34$ & $33$ & $33$ & $32$ & $26$ & $26$ & $25$ & $18$ &  \\
\hline
\end{tabular}
\bigskip
\caption{}
\label{jctb6}
\end{table}
One can determine the radical filtrations for most generalized Verma modules (from $M_{47}$ to $M_{18}$, including $M_{16}$ and $M_{14}$) by the previous methods although the process is quite time consuming. We might save some time if computer programs are used to take care of some tedious calculations. In order to further simplify the argument and finish the full table, we have to apply the powerful algorithm of duCloux \cite{Duc1} and the latest version of his computer program {\tt Coxeter}3 \cite{Duc2}. For the large Weyl group of $E_8$, it is not reasonable to obtain the Kazhdan-Lusztig polynomials directly. Even the leading coefficients of Kazhdan-Lusztig polynomials (the $\mu$-function \cite{KL}) are very difficult to calculate when $\ell(y_i)$ is large. Fortunately those $\mu$-functions which can be obtained by the program {\tt Coxeter}3 in an ordinary PC, provide enough information of the radical filtrations:

\[
\begin{aligned}
&L_{1}\\
2L_{2}&L_{6}L_{9}\\
L_{3}L_{4}&L_{5}L_{13}L_{14}\\
L&_{7}L_{10}\\
L&_{17}L_{18}\\
L&_{28}L_{33}\\
L_{36}&L_{37}L_{40}\\
&L_{39}\\
&L_{47}
\end{aligned} \qquad
\begin{aligned}
&L_{2}\\
L_{3}L_{4}&L_{5}L_{13}L_{14}\\
L_{6}2L&_{7}L_{9}2L_{10}\\
L_{8}L_{11}L_{13}&L_{14}2L_{17}2L_{18}L_{22}\\
L_{15}L_{16}&L_{20}2L_{28}2L_{33}\\
L_{24}L_{25}L_{30}&L_{31}2L_{36}2L_{37}2L_{40}\\
L_{32}L_{33}&L_{38}2L_{39}L_{41}\\
L_{35}L&_{44}L_{45}2L_{47}\\
&L_{46}
\end{aligned}\qquad
\begin{aligned}
&L_{3}\\
L_{6}L&_{7}L_{9}L_{10}\\
L_{8}L_{11}L_{13}&L_{14}L_{17}L_{18}L_{22}\\
L_{15}L_{16}&L_{20}L_{28}L_{33}\\
L_{24}L_{25}L_{30}&L_{31}L_{36}L_{37}L_{40}\\
L_{32}L_{33}&L_{38}L_{39}L_{41}\\
L_{35}L&_{44}L_{45}L_{47}\\
&L_{46}
\end{aligned}
\]

\bigskip

\[
\begin{aligned}
&L_{4}\\
L_{6}L&_{7}L_{9}L_{10}\\
L_{8}L_{11}L_{13}&L_{14}L_{17}L_{18}L_{22}\\
L_{15}L_{16}&L_{20}L_{28}L_{33}\\
L_{24}L_{25}L_{30}&L_{31}L_{36}L_{37}L_{40}\\
L_{32}L_{33}&L_{38}L_{39}L_{41}\\
L_{35}L&_{44}L_{45}L_{47}\\
&L_{46}
\end{aligned}\qquad
\begin{aligned}
&L_{5}\\
L&_{6}L_{7}\\
L_{8}L&_{17}L_{18}L_{22}\\
L_{15}L&_{20}L_{28}L_{33}\\
L_{22}L_{24}L_{25}L&_{30}L_{31}L_{36}L_{37}L_{40}\\
L_{27}L_{32}L&_{33}L_{38}L_{39}L_{41}\\
L_{35}L_{37}&L_{44}L_{45}L_{47}\\
L&_{41}L_{46}\\
&L_{43}
\end{aligned} \qquad
\begin{aligned}
&L_{6}\\
L&_{8}L_{22}\\
L&_{15}L_{20}\\
L_{22}L_{24}&L_{25}L_{30}L_{31}\\
L_{27}L_{32}&L_{33}L_{38}L_{41}\\
L_{35}L&_{37}L_{44}L_{45}\\
L&_{41}L_{46}\\
&L_{43}
\end{aligned}
\]

\bigskip

\[
\begin{aligned}
&L_{7}\\
L_{8}L_{11}L_{13}&L_{14}L_{17}L_{18}L_{22}\\
L_{9}L_{10}2L_{15}&L_{16}2L_{20}L_{28}L_{33}\\
L_{11}L_{12}L_{13}L_{17}L_{18}L_{22}&2L_{24}2L_{25}2L_{30}2L_{31}L_{36}L_{37}L_{40}\\
L_{15}L_{16}L_{20}L_{21}L_{27}&L_{28}2L_{32}3L_{33}2L_{38}L_{39}2L_{41}\\
L_{18}L_{19}L_{23}L_{25}L_{30}L_{31}&2L_{35}L_{36}2L_{37}L_{40}2L_{44}2L_{45}L_{47}\\
L_{21}L_{28}L_{33}&L_{38}L_{39}2L_{41}2L_{46}\\
L_{26}L_{40}L_{43}&L_{44}L_{45}L_{47}\\
&L_{42}
\end{aligned}
\]

\bigskip

\[
\begin{aligned}
&L_{8}\\
L_{9}L&_{10}L_{15}L_{20}\\
L_{11}L_{12}L_{13}L_{17}L&_{18}L_{22}L_{24}L_{25}L_{30}L_{31}\\
L_{15}L_{16}L_{20}L_{21}L&_{27}L_{28}L_{32}2L_{33}L_{38}L_{41}\\
L_{18}L_{19}L_{23}L_{25}L_{30}L&_{31}L_{35}L_{36}2L_{37}L_{40}L_{44}L_{45}\\
L_{21}L_{28}L_{33}&L_{38}L_{39}2L_{41}L_{46}\\
L_{26}L_{40}L&_{43}L_{44}L_{45}L_{47}\\
&L_{42}
\end{aligned}\qquad
\begin{aligned}
&L_{9}\\
L_{11}&L_{12}L_{13}\\
L_{15}L&_{16}L_{20}L_{21}\\
L_{18}L_{19}L&_{23}L_{25}L_{30}L_{31}\\
L_{21}L_{28}&L_{33}L_{38}L_{41}\\
L_{26}L&_{40}L_{44}L_{45}\\
&L_{42}
\end{aligned}
\]

\bigskip

\[
\begin{aligned}
&L_{10}\\
L_{11}L_{13}&L_{14}L_{17}L_{18}\\
L_{15}2L_{16}L&_{20}L_{21}L_{28}L_{33}\\
L_{18}L_{19}L_{23}L_{24}2L&_{25}L_{30}L_{31}L_{36}L_{37}L_{40}\\
L_{21}L_{28}L_{32}&2L_{33}L_{38}L_{39}L_{41}\\
L_{26}L_{35}L&_{40}L_{44}L_{45}L_{47}\\
L&_{42}L_{46}\\
&L_{47}
\end{aligned} \qquad
\begin{aligned}
&L_{11}\\
L_{15}L&_{16}L_{20}L_{21}\\
L_{18}L_{19}L_{22}L&_{23}L_{24}2L_{25}L_{30}L_{31}\\
L_{21}L_{27}L_{28}&L_{32}2L_{33}L_{38}L_{41}\\
L_{26}L_{35}L&_{37}L_{40}L_{44}L_{45}\\
L_{41}&L_{42}L_{46}\\
L&_{43}L_{47}
\end{aligned}
\]

\bigskip

\[
\begin{aligned}
&L_{12}\\
L&_{15}L_{20}\\
L_{17}L_{18}L_{22}&L_{23}L_{25}L_{30}L_{31}\\
L_{20}L_{21}L_{27}&2L_{28}2L_{33}L_{38}L_{41}\\
L_{23}L_{25}L_{26}L_{30}L&_{31}L_{36}2L_{37}2L_{40}L_{44}L_{45}\\
L_{27}L_{28}L_{32}L&_{33}L_{38}L_{39}2L_{41}L_{42}\\
L_{29}L_{30}L_{35}L_{37}&L_{40}L_{43}L_{44}L_{45}L_{47}\\
L_{32}L&_{38}L_{41}L_{46}\\
&L_{34}
\end{aligned} \qquad
\begin{aligned}
&L_{13}\\
L&_{15}L_{16}\\
L_{18}L_{19}L&_{24}L_{25}L_{30}L_{31}\\
L_{21}L_{28}L&_{32}2L_{33}L_{38}L_{41}\\
L_{26}L_{35}L_{36}&L_{37}L_{40}L_{44}L_{45}\\
L_{39}L&_{41}L_{42}L_{46}\\
L_{43}L&_{44}L_{45}2L_{47}\\
&L_{46}
\end{aligned}
\]

\bigskip

\[
\begin{aligned}
&L_{14}\\
&L_{16}\\
L&_{24}L_{25}\\
L&_{32}L_{33}\\
&L_{35}\\
&L_{46}\\
&L_{47}
\end{aligned} \qquad
\begin{aligned}
&L_{15}\\
L_{17}L_{18}L_{22}&L_{24}L_{25}L_{30}L_{31}\\
L_{20}L_{21}L_{27}2L&_{28}L_{32}3L_{33}L_{38}L_{41}\\
L_{23}L_{25}L_{26}L_{30}L_{31}&L_{35}2L_{36}3L_{37}2L_{40}L_{44}L_{45}\\
L_{27}L_{28}L_{32}L_{33}&L_{38}2L_{39}3L_{41}L_{42}L_{46}\\
L_{29}L_{30}L_{35}L_{37}&L_{40}2L_{43}2L_{44}2L_{45}3L_{47}\\
L_{32}L&_{38}L_{41}2L_{46}\\
&L_{34}
\end{aligned} \qquad
\begin{aligned}
&L_{16}\\
L_{18}L&_{19}L_{24}L_{25}\\
L_{21}L&_{28}L_{32}2L_{33}\\
L_{26}L_{35}&L_{36}L_{37}L_{40}\\
L_{39}L&_{41}L_{42}L_{46}\\
L_{43}L&_{44}L_{45}3L_{47}\\
&L_{46}
\end{aligned}
\]

\bigskip

\[
\begin{aligned}
&L_{17}\\
L_{20}&L_{28}L_{33}\\
L_{23}L_{25}L_{30}&L_{31}L_{36}L_{37}L_{40}\\
L_{27}L_{28}L_{32}&L_{33}L_{38}L_{39}L_{41}\\
L_{29}L_{30}L_{35}L&_{37}L_{40}L_{44}L_{45}L_{47}\\
L_{32}L&_{38}L_{41}L_{46}\\
&L_{34}
\end{aligned} \qquad
\begin{aligned}
&L_{18}\\
L_{20}L&_{21}L_{28}L_{33}\\
L_{23}L_{25}L_{26}L&_{30}L_{31}L_{36}L_{37}L_{40}\\
L_{27}L_{28}L_{32}L&_{33}L_{38}L_{39}2L_{41}L_{42}\\
L_{29}L_{30}L_{35}L_{37}&L_{40}L_{43}2L_{44}2L_{45}2L_{47}\\
L_{32}L&_{38}L_{41}2L_{46}\\
&L_{34}
\end{aligned}
\]

\bigskip

\[
\begin{aligned}
&L_{19}\\
&L_{21}\\
&L_{26}\\
L&_{41}L_{42}\\
L_{43}L&_{44}L_{45}L_{47}\\
&L_{46}
\end{aligned} \quad
\begin{aligned}
&L_{20}\\
L_{22}L_{23}&L_{25}L_{30}L_{31}\\
2L_{27}L_{28}L&_{32}L_{33}L_{38}L_{41}\\
L_{29}L_{30}L_{35}&2L_{37}L_{40}L_{44}L_{45}\\
L_{32}L&_{38}2L_{41}L_{46}\\
L&_{34}L_{43}
\end{aligned} \qquad
\begin{aligned}
&L_{21}\\
L_{23}&L_{25}L_{26}\\
L_{27}L_{28}L&_{32}L_{33}L_{41}L_{42}\\
L_{29}L_{30}L_{35}L_{37}&L_{40}L_{43}L_{44}L_{45}L_{47}\\
L_{32}L&_{38}L_{41}2L_{46}\\
L&_{34}L_{47}
\end{aligned}
\]

\bigskip

\[
\begin{aligned}
&L_{22}\\
&L_{27}\\
&L_{37}\\
&L_{41}\\
&L_{43}
\end{aligned} \qquad
\begin{aligned}
&L_{23}\\
L&_{27}L_{28}\\
L_{29}L&_{30}L_{37}L_{40}\\
L_{32}&L_{38}L_{41}\\
&L_{34}
\end{aligned} \qquad
\begin{aligned}
&L_{24}\\
L&_{32}L_{33}\\
L_{35}&L_{36}L_{37}\\
L_{39}&L_{41}L_{46}\\
L_{43}L&_{44}L_{45}2L_{47}\\
&L_{46}
\end{aligned} \qquad
\begin{aligned}
&L_{25}\\
L_{27}L&_{28}L_{32}L_{33}\\
L_{29}L_{30}L&_{35}L_{36}2L_{37}L_{40}\\
L_{32}L_{38}&L_{39}2L_{41}L_{46}\\
L_{34}L_{43}&L_{44}L_{45}2L_{47}\\
&L_{46}
\end{aligned}
\]

\bigskip

\[
\begin{aligned}
&L_{26}\\
L_{28}L&_{33}L_{41}L_{42}\\
L_{30}L_{31}L_{35}L_{36}L&_{37}L_{40}L_{43}L_{44}L_{45}L_{47}\\
L_{32}L_{33}2L&_{38}L_{39}2L_{41}2L_{46}\\
L_{34}L_{35}L_{37}&L_{40}2L_{44}2L_{45}2L_{47}\\
L_{38}L&_{39}L_{41}2L_{46}\\
L_{40}L_{43}&L_{44}L_{45}L_{47}\\
&L_{42}
\end{aligned} \quad
\begin{aligned}
&L_{27}\\
L_{29}&L_{30}L_{37}\\
L_{32}&L_{38}2L_{41}\\
L_{34}L&_{43}L_{44}L_{45}\\
&L_{46}
\end{aligned} \quad
\begin{aligned}
&L_{28}\\
L_{30}L_{31}&L_{36}L_{37}L_{40}\\
L_{32}L_{33}&2L_{38}L_{39}2L_{41}\\
L_{34}L_{35}L_{37}&L_{40}2L_{44}2L_{45}L_{47}\\
L_{38}L&_{39}L_{41}2L_{46}\\
L_{40}L_{43}&L_{44}L_{45}L_{47}\\
&L_{42}
\end{aligned}
\]

\bigskip

\[
\begin{aligned}
&L_{29}\\
&L_{32}\\
&L_{34}\\
&L_{46}\\
&L_{47}
\end{aligned} \qquad
\begin{aligned}
&L_{30}\\
L_{32}L&_{33}L_{38}L_{41}\\
L_{34}L_{35}L&_{37}L_{40}L_{44}L_{45}\\
L_{38}L&_{39}L_{41}2L_{46}\\
L_{40}L_{43}&L_{44}L_{45}2L_{47}\\
&L_{42}
\end{aligned} \qquad
\begin{aligned}
&L_{31}\\
L_{33}&L_{38}L_{41}\\
L_{35}L_{37}&L_{40}L_{44}L_{45}\\
L_{38}L&_{39}L_{41}L_{46}\\
L_{40}L_{43}&L_{44}L_{45}L_{47}\\
&L_{42}
\end{aligned} \qquad
\begin{aligned}
&L_{32}\\
L_{34}&L_{35}L_{37}\\
L_{38}L&_{39}2L_{41}2L_{46}\\
L_{40}2L_{43}&2L_{44}2L_{45}3L_{47}\\
L&_{42}L_{46}
\end{aligned}
\]

\bigskip

\[
\begin{aligned}
&L_{33}\\
L_{35}L&_{36}L_{37}L_{40}\\
L_{38}2L&_{39}L_{41}L_{46}\\
L_{40}L_{43}&2L_{44}2L_{45}3L_{47}\\
L&_{42}L_{46}
\end{aligned} \quad\
\begin{aligned}
&L_{34}\\
L_{38}&L_{41}L_{46}\\
L_{40}L_{43}&L_{44}L_{45}L_{47}\\
&L_{42}
\end{aligned} \qquad
\begin{aligned}
&L_{35}\\
L_{38}L&_{39}L_{41}L_{46}\\
L_{40}L_{43}&2L_{44}2L_{45}2L_{47}\\
L&_{42}L_{46}
\end{aligned}\qquad
\begin{aligned}
&L_{36}\\
&L_{39}\\
L_{44}&L_{45}L_{47}\\
&L_{46}
\end{aligned}
\]

\bigskip

\[
\begin{aligned}
&L_{37}\\
L_{38}&L_{39}L_{41}\\
L_{40}L_{43}&2L_{44}2L_{45}L_{47}\\
L&_{42}L_{46}
\end{aligned} \qquad
\begin{aligned}
&L_{38}\\
L_{40}&L_{44}L_{45}\\
&L_{42}
\end{aligned} \qquad
\begin{aligned}
&L_{39}\\
L_{40}L&_{44}L_{45}L_{47}\\
L&_{42}L_{46}\\
&L_{47}
\end{aligned} \qquad
\begin{aligned}
&L_{40}\\
L&_{41}L_{42}\\
L_{43}L&_{44}L_{45}L_{47}\\
&L_{46}
\end{aligned}
\]

\bigskip

\[
\begin{aligned}
&L_{41}\\
L_{43}&L_{44}L_{45}\\
&L_{46}
\end{aligned}\qquad
\begin{aligned}
&L_{42}\\
L_{43}L&_{44}L_{45}L_{47}\\
&2L_{46}\\
&L_{47}
\end{aligned} \qquad
\begin{aligned}
&L_{43}\\
&L_{46}\\
&L_{47}
\end{aligned} \qquad
\begin{aligned}
&L_{44}\\
&L_{46}\\
&L_{47}
\end{aligned}\qquad
\begin{aligned}
&L_{45}\\
&L_{46}\\
&L_{47}
\end{aligned} \qquad
\begin{aligned}
&L_{46}\\
&2L_{47}
\end{aligned} \qquad
\begin{aligned}
&L_{47}
\end{aligned}
\]

\bigskip
There are five socular weights $\lambda^{34}$, $\lambda^{42}$, $\lambda^{43}$, $\lambda^{46}$ and $\lambda^{47}$, while $m(\lambda^{34})=\lambda^{12}$, $m(\lambda^{42})=\lambda^7$, $m(\lambda^{43})=\lambda^{5}$, $m(\lambda^{46})=\lambda^2$ and $m(\lambda^{47})=\lambda^1$.

\[
\begin{aligned}
\end{aligned} \qquad
\begin{aligned}
\end{aligned} \qquad
\begin{aligned}
\end{aligned}
\]

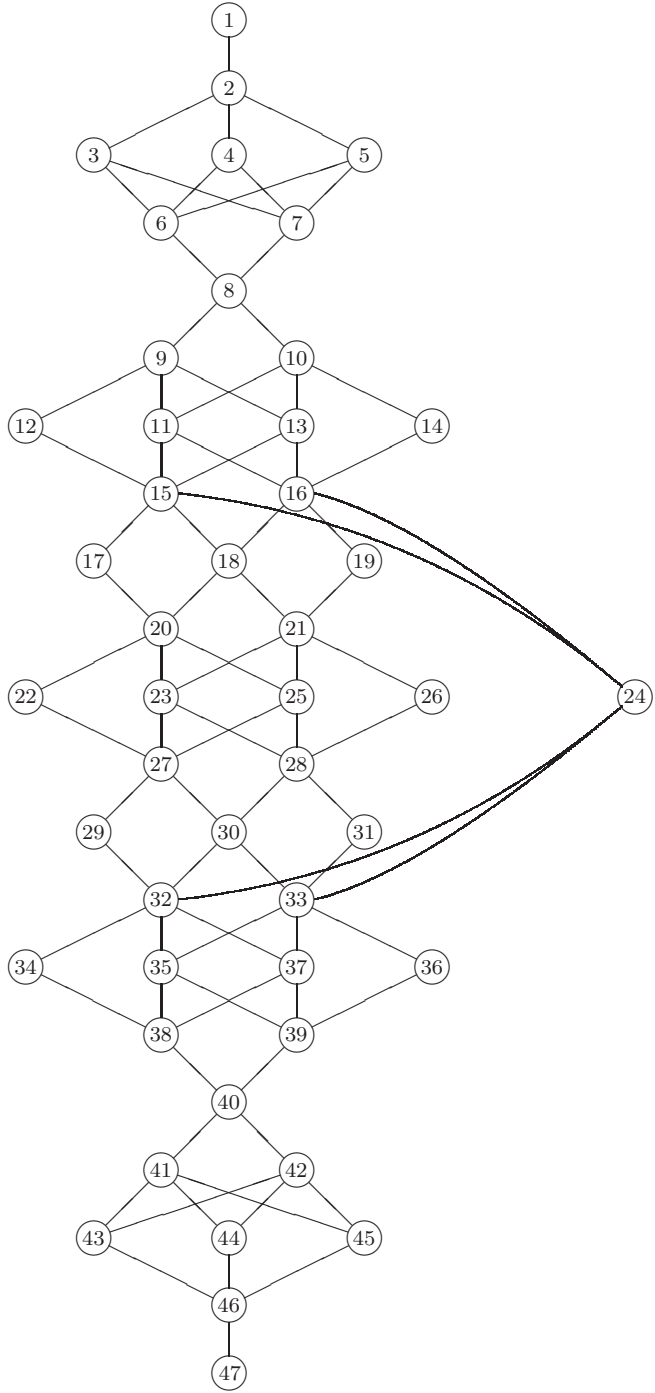
\begin{figure}[H]\footnotesize
\setlength{\unitlength}{0.9mm}
\begin{center}
\begin{picture}(0,210)

\put(0,5){\circle{5}}
\put(0,15){\circle{5}}
\put(-20,25){\circle{5}} \put(0,25){\circle{5}} \put(20,25){\circle{5}}
\put(-10,35){\circle{5}} \put(10,35){\circle{5}}
\put(0,45){\circle{5}}
\put(-10,55){\circle{5}} \put(10,55){\circle{5}}
\put(-30,65){\circle{5}}\put(-10,65){\circle{5}} \put(10,65){\circle{5}}\put(30,65){\circle{5}}
\put(-10,75){\circle{5}} \put(10,75){\circle{5}}
\put(-20,85){\circle{5}} \put(0,85){\circle{5}} \put(20,85){\circle{5}}
\put(-10,95){\circle{5}} \put(10,95){\circle{5}}
\put(-30,105){\circle{5}}\put(-10,105){\circle{5}} \put(10,105){\circle{5}}\put(30,105){\circle{5}}\put(60,105){\circle{5}}
\put(-10,115){\circle{5}} \put(10,115){\circle{5}}
\put(-20,125){\circle{5}} \put(0,125){\circle{5}} \put(20,125){\circle{5}}
\put(-10,135){\circle{5}} \put(10,135){\circle{5}}
\put(-30,145){\circle{5}}\put(-10,145){\circle{5}} \put(10,145){\circle{5}}\put(30,145){\circle{5}}
\put(-10,155){\circle{5}} \put(10,155){\circle{5}}
\put(0,165){\circle{5}}
\put(-10,175){\circle{5}} \put(10,175){\circle{5}}
\put(-20,185){\circle{5}} \put(0,185){\circle{5}} \put(20,185){\circle{5}}
\put(0,195){\circle{5}}
\put(0,205){\circle{5}}

\put(0,7.5){\line(0,1){5}}
\put(-2.3,16.15){\line(-2,1){15.4}} \put(0,17.5){\line(0,1){5}} \put(2.3,16.15){\line(2,1){15.4}}
\put(-18.15,26.85){\line(1,1){6.3}} \put(-17.54,25.82){\line(3,1){25.1}}
\put(1.85,26.85){\line(1,1){6.3}} \put(-1.85,26.85){\line(-1,1){6.3}}
\put(17.54,25.82){\line(-3,1){25.1}} \put(18.15,26.85){\line(-1,1){6.3}}
\put(-8.15,36.85){\line(1,1){6.3}} \put(8.15,36.85){\line(-1,1){6.3}}
\put(1.85,46.85){\line(1,1){6.3}} \put(-1.85,46.85){\line(-1,1){6.3}}
\put(-12.3,56.15){\line(-2,1){15.4}} \put(-10,57.5){\line(0,1){5}} \put(-7.7,56.15){\line(2,1){15.4}}
\put(7.7,56.15){\line(-2,1){15.4}} \put(10,57.5){\line(0,1){5}} \put(12.3,56.15){\line(2,1){15.4}}
\put(-27.7,66.15){\line(2,1){15.4}} \put(-10,67.5){\line(0,1){5}} \put(-7.7,66.15){\line(2,1){15.4}}
\put(7.7,66.15){\line(-2,1){15.4}} \put(10,67.5){\line(0,1){5}} \put(27.7,66.15){\line(-2,1){15.4}}
\put(-11.85,76.85){\line(-1,1){6.3}} \put(-8.15,76.85){\line(1,1){6.3}}
\put(8.15,76.85){\line(-1,1){6.3}} \put(11.85,76.85){\line(1,1){6.3}}
\put(-18.15,86.85){\line(1,1){6.3}}
\put(1.85,86.85){\line(1,1){6.3}} \put(-1.85,86.85){\line(-1,1){6.3}}
\put(18.15,86.85){\line(-1,1){6.3}}
\put(-12.3,96.15){\line(-2,1){15.4}} \put(-10,97.5){\line(0,1){5}} \put(-7.7,96.15){\line(2,1){15.4}}
\put(7.7,96.15){\line(-2,1){15.4}} \put(10,97.5){\line(0,1){5}} \put(12.3,96.15){\line(2,1){15.4}}
\put(-27.7,106.15){\line(2,1){15.4}} \put(-10,107.5){\line(0,1){5}} \put(-7.7,106.15){\line(2,1){15.4}}
\put(7.7,106.15){\line(-2,1){15.4}} \put(10,107.5){\line(0,1){5}} \put(27.7,106.15){\line(-2,1){15.4}}
\put(-11.85,116.85){\line(-1,1){6.3}} \put(-8.15,116.85){\line(1,1){6.3}}
\put(8.15,116.85){\line(-1,1){6.3}} \put(11.85,116.85){\line(1,1){6.3}}
\put(-18.15,126.85){\line(1,1){6.3}}
\put(1.85,126.85){\line(1,1){6.3}} \put(-1.85,126.85){\line(-1,1){6.3}}
\put(18.15,126.85){\line(-1,1){6.3}}
\put(-12.3,136.15){\line(-2,1){15.4}} \put(-10,137.5){\line(0,1){5}} \put(-7.7,136.15){\line(2,1){15.4}}
\put(7.7,136.15){\line(-2,1){15.4}} \put(10,137.5){\line(0,1){5}} \put(12.3,136.15){\line(2,1){15.4}}
\put(-27.7,146.15){\line(2,1){15.4}} \put(-10,147.5){\line(0,1){5}} \put(-7.7,146.15){\line(2,1){15.4}}
\put(7.7,146.15){\line(-2,1){15.4}} \put(10,147.5){\line(0,1){5}} \put(27.7,146.15){\line(-2,1){15.4}}
\put(-8.15,156.85){\line(1,1){6.3}} \put(8.15,156.85){\line(-1,1){6.3}}
\put(1.85,166.85){\line(1,1){6.3}} \put(-1.85,166.85){\line(-1,1){6.3}}
\put(-11.85,176.85){\line(-1,1){6.3}} \put(-8.15,176.85){\line(1,1){6.3}}\put(-7.54,175.82){\line(3,1){25.1}}
\put(8.15,176.85){\line(-1,1){6.3}} \put(11.85,176.85){\line(1,1){6.3}}\put(7.54,175.82){\line(-3,1){25.1}}
\put(17.7,186.15){\line(-2,1){15.4}} \put(0,187.5){\line(0,1){5}} \put(-17.7,186.15){\line(2,1){15.4}}
\put(0,197.5){\line(0,1){5}}

\qbezier(12.7,75)(29,79)(58,103.5)
\qbezier(-7.3,75)(29,79)(58,103.5)
\qbezier(12.7,135)(29,131)(58,106.5)
\qbezier(-7.3,135)(29,131)(58,106.5)

\put(-1.5,4.0){$47$}
\put(-1.7,14){$46$}
\put(-21.7,24){$43$} \put(-1.7,24){$44$} \put(18.3,24){$45$}
\put(-11.7,34){$41$}\put(8.3,34){$42$}
\put(-1.7,44){$40$}
\put(-11.7,54){$38$}\put(8.3,54){$39$}
\put(-31.7,64){$34$}\put(-11.7,64){$35$}\put(8.3,64){$37$}\put(28.3,64){$36$}
\put(-11.7,74){$32$}\put(8.3,74){$33$}
\put(-21.7,84){$29$} \put(-1.7,84){$30$} \put(18.3,84){$31$}
\put(-11.7,94){$27$}\put(8.3,94){$28$}
\put(-31.7,104){$22$}\put(-11.7,104){$23$}\put(8.3,104){$25$}\put(28.3,104){$26$}\put(58.3,104){$24$}
\put(-11.7,114){$20$}\put(8.3,114){$21$}
\put(-21.7,124){$17$} \put(-1.7,124){$18$} \put(18.3,124){$19$}
\put(-11.7,134){$15$}\put(8.3,134){$16$}
\put(-31.7,144){$12$}\put(-11.7,144){$11$}\put(8.3,144){$13$}\put(28.3,144){$14$}
\put(-10.8,154){$9$}\put(8.3,154){$10$}
\put(-0.8,164){$8$}
\put(-10.8,174){$6$}\put(9.2,174){$7$}
\put(-20.8,184){$3$} \put(-0.8,184){$4$} \put(19.2,184){$5$}
\put(-0.8,194){$2$}\put(-0.8,204){$1$}

\end{picture}
\end{center}
\caption{$\Ext^1$ poset for $(E_8, 4, 4)$}
\label{jcfg4}
\end{figure}

\begin{remark}
The Jordan-H\"{o}lder length of $M_7$ is $83$. It is the longest basic generalized Verma module.

It is well-known that Hom spaces between Verma modules are at most one-dimensional. This is not the case for generalized Verma modules. The first counterexample was given in \cite{I1}. It is easy to see from the radical filtration of $M_{46}$ that
\[
\dim\Hom_{\caO^\frp}(M_{47}, M_{46})=2.
\]
Since the Jordan-H\"{o}lder length of $M_{46}$ is $3$, it is the shortest generalized Verma module with this property.
\end{remark}

\end{document}